\NewDocumentCommand{\makeabbrev}{mmm}
 {
  \yoruk_makeabbrev:nnn { #1 } { #2 } { #3 }
 }
\makeabbrev{\textsf}{sf#1}{a,b,c,d,e,f,g,h,i,j,k,l,m,n,o,p,q,r,s,t,u,v,w,x,y,z,A,B,C,D,E,F,G,H,I,J,K,L,M,N,O,P,Q,R,S,T,U,V,W,X,Y,Z}
\makeabbrev{\mathfrak}{fk#1}{a,b,c,d,e,f,g,h,i,j,k,l,m,n,o,p,q,r,s,t,u,v,w,x,y,z,A,B,C,D,E,F,G,H,I,J,K,L,M,N,O,P,Q,R,S,T,U,V,W,X,Y,Z}
\makeabbrev{\mathcal}{cal#1}{A,B,C,D,E,F,G,H,I,J,K,L,M,N,O,P,Q,R,S,T,U,V,W,X,Y,Z}
\makeabbrev{\mathbb}{bb#1}{A,B,C,D,E,F,G,H,I,J,K,L,M,N,O,P,Q,R,S,T,U,V,W,X,Y,Z}
\makeabbrev{\mathscr}{scr#1}{A,B,C,D,E,F,G,H,I,J,K,L,M,N,O,P,Q,R,S,T,U,V,W,X,Y,Z}
\newcommand{\xto}[1]{\xrightarrow{#1}}
\newcommand{\To}{\Rightarrow}
\renewcommand{\phi}{\varphi}
\DeclareMathOperator{\Fun}{Fun}
\DeclareMathOperator{\GL}{GL_1}
\DeclareMathOperator{\gl}{\mathit{gl}_1}
\DeclareMathOperator{\Map}{Map}
\DeclareMathOperator{\ho}{ho}
\DeclareMathOperator{\Binf}{B^{\infty}}
\DeclareMathOperator{\coker}{coker}
\DeclareMathOperator{\nerve}{N}
\DeclareMathOperator{\Symm}{Sym^{*}}
\DeclareMathOperator{\sym}{Sym}
\DeclareMathOperator{\free}{\mathcal{F}}
\DeclareMathOperator{\bpshift}{\textsc{Sh}}
\def\tensor{\qopname\relax m{\otimes}}
\newcommand{\gp}{^{\mathrm{gp}}}
\newcommand{\calg}{\mathrm{CAlg}}
\newcommand{\Sp}{\mathrm{Sp}}
\newcommand{\Mod}{\mathrm{Mod}}
\newcommand{\inv}{\mathrm{inv}}
\newcommand{\id}{\mathrm{id}}
\newcommand{\ominfty}{\Omega ^{\infty}}
\newcommand{\susinftyp}{\Sigma ^{\infty}_+}
\newcommand{\susinfty}{\Sigma ^{\infty}}
\newcommand{\grp}{\mathrm{CGrp}}
\newcommand{\mon}{\mathrm{CMon}}
\newcommand{\einf}{\mathbb{E}_{\infty}}
\newcommand{\pt}{\mathrm{pt}}
\newcommand{\cat}{$\infty$-cat\-e\-go\-ry\xspace}
\newcommand{\cats}{$\infty$-cat\-e\-gories\xspace}
\newcommand{\wt}[1]{\widetilde{#1}}
\newcommand{\spaces}{\calS}
\newcommand{\operad}{^{\otimes}}
\newcommand{\pre}[1]{\leftidx{^{+}}{#1}}
\newcommand{\aug}{_\mathrm{aug}}
\newcommand{\units}{^\mathrm{u}}
\newcommand{\pr}{\mathrm{pr}}
\newtheoremstyle{reference}
   {}
   {}
   {}
   {}
   {\bfseries}
   {}
   {.1em}
   {\thmname{#1}
    \thmnumber{#2}
    \thmnote{{\sc [#3]}}}
\theoremstyle{reference}
  \newtheorem{thm}{Theorem}[section]
  \newtheorem{prop}[thm]{Proposition}
  \newtheorem{lemma}[thm]{Lemma}
  \newtheorem{cor}[thm]{Corollary}
  \newtheorem{defi}[thm]{Definition}
  \newtheorem{rem}[thm]{Remark}
  \newtheorem{ex}[thm]{Example}
  \newtheorem{notat}[thm]{Notation}
  \newtheorem{constr}[thm]{Construction}
  \newtheorem*{thm*}{Theorem}
  \newtheorem*{cor*}{Corollary}
  \newtheorem*{counterex*}{Counterexample}
  \newtheorem*{defi*}{Definition}
  \newtheorem*{ex*}{Example}
  \newtheorem*{exercise*}{Exercise}
  \newtheorem*{lemma*}{Lemma}
  \newtheorem*{notat*}{Notation}
  \newtheorem*{prop*}{Proposition}
  \newtheorem*{question*}{Question}
  \newtheorem*{rem*}{Remark}
  \newtheorem*{them*}{Theorem}
  \newtheorem*{constr*}{Construction}
  \newtheorem*{guess*}{Guess}
\author{Stefano Ariotta}
\title[$\einf$-groups of units and logarithmic derivatives]
  {Deformations of $\einf$-groups of units and logarithmic derivatives
  of $\einf$-rings}
\date{}
\begin{document}
\maketitle
\begin{abstract}
  We extend a classical fact about deformations of groups of units of
  commutative rings to $\einf$-ring spectra, and we use this
  result to provide a map of spectra generalizing the ordinary logarithmic
  derivative induced by an $R$-module derivation.
\end{abstract}
\setcounter{tocdepth}{2}
\tableofcontents
\section*{Introduction}
\addtocontents{toc}{\protect\setcounter{tocdepth}{0}}
Stable homotopy theory provides, among other things, a framework to develop a
homotopy coherent version of ordinary commutative algebra.
Instances of this generalization are by now abundant in the literature.
The aim of this paper is to extend to this context the notion of
logarithmic derivatives, by means of a generalization of the following classical
result (see Proposition \ref{classicalresult} for a more detailed statement,
and a proof).

\begin{prop*}
  Given a square-zero extension of commutative rings $\widetilde R \to R$ with
  kernel $I$, there exists an induced short exact sequence of Abelian groups
  \begin{displaymath}
      0 \longrightarrow I \longrightarrow \GL \widetilde R
      \longrightarrow \GL R \longrightarrow 0.
  \end{displaymath}
\end{prop*}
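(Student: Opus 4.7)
The plan is to exploit the fact that the square-zero condition $I^2 = 0$ makes every element of the form $1+x$ (with $x \in I$) automatically invertible, and to read off the short exact sequence essentially from this observation.

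First, I would define the leftmost map by $x \mapsto 1 + x$. The key computation is that for $x, y \in I$, the identities $(1+x)(1-x) = 1 - x^2 = 1$ and $(1+x)(1+y) = 1 + x + y + xy = 1 + x + y$ hold because $I^2 = 0$. The first shows $1+x$ is a unit with inverse $1-x$, so the assignment lands in $\GL \widetilde{R}$; the second shows it is a group homomorphism from the additive group $(I,+)$ to the multiplicative group $\GL \widetilde R$. Injectivity of this map is immediate from $1+x = 1 \Leftrightarrow x = 0$, and composition with $\GL \widetilde R \to \GL R$ is trivial because $x \in I$ maps to $0$ in $R$.

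Next I would check exactness at $\GL \widetilde R$: if a unit $\widetilde u \in \widetilde R$ maps to $1 \in R$, then $\widetilde u = 1 + x$ for some $x \in I$ (because $I$ is the kernel of $\widetilde R \to R$ as additive groups), and such an element lies in the image of the first map. Finally, for surjectivity of $\GL \widetilde R \to \GL R$, I would pick any unit $u \in R$, choose an arbitrary lift $\widetilde u \in \widetilde R$ and a lift $\widetilde v$ of $u^{-1}$. Then $\widetilde u \widetilde v$ reduces to $1$ in $R$, so $\widetilde u \widetilde v = 1 + x$ for some $x \in I$; by the previous observation this is a unit with inverse $1-x$, hence $\widetilde u \bigl(\widetilde v(1-x)\bigr) = 1$ exhibits $\widetilde u$ as a unit lifting $u$.

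There is no real obstacle here; the argument is entirely formal once one notices that the square-zero condition turns $1 + I$ into a subgroup of $\GL \widetilde R$ isomorphic to $(I, +)$. The only mildly delicate point is surjectivity, which at first glance seems to require a genuine lift of $u^{-1}$; the trick is that any set-theoretic lift of $u^{-1}$ differs from the true inverse by a unit of the form $1 + x$, which can be corrected for free. I would present the proof in the order: construction and well-definedness of $x \mapsto 1+x$, triviality of the composite, exactness at the middle term, and finally surjectivity.
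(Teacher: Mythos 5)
Your proof is correct and follows essentially the same route as the paper's: the same map $x \mapsto 1+x$, the same square-zero computations for invertibility and additivity, the same identification of the kernel with $1+I$, and the same lifting trick for surjectivity (your $\widetilde v(1-x)$ is literally the paper's $\beta(2-\alpha\beta)$). No gaps.
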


The main motivation for the definition of homotopy coherent logarithmic
derivatives is the investigation of lifts of Adams
operations on (suitable localizations of) K-theory
to morphisms of spectra, developing on current work in progress by
Barwick-Glasman-Mathew-Nikolaus.

To work with homotopy theoretic tools, we will use the language of \cats,
as developed in \cite{HTT}, and we will set our discussion of stable homotopy
theoretic ideas in the framework developed, using this language, in \cite{HA}.

To guide the intuition in the homotopy coherent setting, it is useful to
think of the \cat of spaces as playing the role pertaining to the
category of sets in the ordinary context. We can express the analogy
with ordinary commutative algebra by thinking of the \cat of spectra as the
analogue of the Abelian category of Abelian groups and the derived category
of the integers at once (and more generally
of stable \cats as the analogue of both Abelian categories and derived
categories). Along these lines,
$\einf$-rings become the counterparts of ordinary commutative rings.

With these ideas in mind, our goal will be to prove the following
generalization of the above Proposition
(see Theorem \ref{big-thm}; the reasons for the connectivity conditions will
be clearified in Remark \ref{connectivity-hp}).

\begin{thm*}
  Let $R$ be a connective $\einf$-ring, and let $\wt R \to R$ be a square-zero
  extension by a connective $R$-module $M$.
  Then, there exists a co/fiber sequence
  $$M \to \gl\wt R \to \gl R$$
  in the \cat $\Sp$.
\end{thm*}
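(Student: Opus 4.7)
My plan is to realize $\wt R$ as a pullback of $\einf$-rings, apply $\gl$ using its limit-preservation properties, and then analyze the resulting pullback using the structure of the trivial square-zero extension.

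Following the theory of square-zero extensions (as in \cite{HA}), I first write $\wt R$ as the pullback
$$\wt R \simeq R \times_{R \oplus \Sigma M} R$$
in $\calg(\Sp)$, where one leg is the split inclusion $s_0 \colon R \to R \oplus \Sigma M$ sending $r \mapsto (r, 0)$ and the other is the derivation $d \colon R \to R \oplus \Sigma M$ classifying the extension. Since $\gl$ preserves limits (essentially because it is built from $\ominfty$ together with a pullback along the inclusion of units on $\pi_0$), applying it produces
$$\gl \wt R \simeq \gl R \times_{\gl(R \oplus \Sigma M)} \gl R.$$

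The crux is then to identify $\gl$ of the trivial square-zero extension. Since $\Sigma M$ is $1$-connective, invertibility in $\pi_0(R \oplus \Sigma M)$ reduces to invertibility in $\pi_0 R$, so the projection $p \colon \gl(R \oplus \Sigma M) \to \gl R$ is a fibration whose fiber over the identity consists of elements of the form $1 + x$ with $x \in \Sigma M$. The relation $(1+x)(1+y) = 1 + x + y$ valid in a trivial square-zero extension identifies this fiber additively with the $\einf$-group $\ominfty \Sigma M$; delooping and using the section $s_0$ then yields an equivalence of spectra
$$\gl(R \oplus \Sigma M) \simeq \gl R \oplus \Sigma M.$$

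Under this splitting, $\gl(s_0)$ corresponds to $(\id, 0)$ while $\gl(d)$ corresponds to $(\id, \delta)$ for a unique map $\delta \colon \gl R \to \Sigma M$, which will be precisely the logarithmic derivative mentioned in the introduction. The pullback then collapses to the fiber of $\delta$, yielding a fiber sequence
$$\gl \wt R \to \gl R \xrightarrow{\delta} \Sigma M;$$
rotating and using $\Omega \Sigma M \simeq M$ gives the desired cofiber sequence. The main obstacle I expect is the identification $\gl(R \oplus \Sigma M) \simeq \gl R \oplus \Sigma M$, since promoting the elementary ``logarithm'' identity to a coherent equivalence of $\einf$-groups requires more than a pointwise group-theoretic calculation, and presumably occupies a substantial technical lemma earlier in the paper.
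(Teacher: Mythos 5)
Your reduction is essentially the paper's: realize $\wt R$ as the pullback $R \times_{R\oplus M[1]} R$, use that $\gl$ is a right adjoint and hence sends this to a Cartesian square of spectra, split $\gl(R\oplus M[1])$ using the section, and collapse the resulting pullback to the fiber of the induced map to $M[1]$. The paper does exactly this in Proposition \ref{one-plus-m} (phrased via the pasting law for pullbacks rather than your change of coordinates), and then reduces further to trivial square-zero extensions of $\bbS$ by base change along the unit map in Lemma \ref{reduction}. Up to that point your argument is sound.

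The gap is exactly where you flag it, and it is not a routine technicality to be deferred: the identification of the fiber $1+M[1]$ of $\gl(R\oplus M[1]) \to \gl R$ with $M[1]$ \emph{as a spectrum} is the entire content of the theorem. The relation $(1+x)(1+y)=1+x+y$ holds only in the homotopy category (Remark \ref{triv-sq-zero-structure} describes the square-zero multiplication only up to homotopy), so at best it produces an equivalence of group objects in $\ho\spaces$, i.e.\ an H-space equivalence $\ominfty(1+M[1]) \simeq \ominfty M[1]$; an H-space map cannot be delooped without supplying infinitely many coherences, which is precisely what a pointwise ``logarithm'' identity does not give. The paper closes this in two ways. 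The top-down proof observes that $M \mapsto 1+M$ is a limit-preserving endofunctor of $\Sp$ whose left adjoint is $X \mapsto \calL_\bbS(\bbS[\ominfty_a X])$ (Lemma \ref{bunch-of-adjs}), and then computes $\calL_\bbS\circ\bbS[-] \simeq \Binf\circ(-)\gp$ by evaluating on the free $\einf$-space on a point and using $\calL_\bbS\Symm\simeq\id$ (Proposition \ref{key-step}); hence the left adjoint is $\tau_{\geq 0}$ and $1+(-)$ is the identity on connective spectra. The bottom-up proof makes your heuristic rigorous differently: it establishes the space-level equivalence $\ominfty(1+M)\simeq\ominfty M$ via the five lemma and the classical statement, and then invokes \cite[2.10]{ggn15}, which says a product-preserving functor into $\Sp^{cn}\simeq\grp$ is determined by its postcomposition with $\ominfty$ --- the point being that one rigidifies the \emph{functor} $1+(-)$, not a single map. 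Some device of this kind (an adjunction computation, or uniqueness of limit-preserving lifts along $\ominfty$) must be added to your argument before it constitutes a proof.
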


By virtue of the above theorem, we will be able to define a map of
spectra $\log_\partial\colon\gl R \to M$ from any connective $\einf$-ring
$R$ and any derivation (as defined in \cite[7.4.1]{HA} and reviewed in Section
\ref{subs-sq-zero}) $\partial$ of $R$ into an $R$-module $M$.

In Section \ref{bigproof}, we set up what we need in order to state the
theorem, then we give two alternative proofs of it, one recovering the ordinary
proposition as a particular case, the other leveraging the homotopy coherent
statement from the ordinary one.
In Section \ref{applications}, we
define our homotopy coherent logarithmic derivative, and show how it
generalizes the ordinary one.
Appendix \ref{appendix-classical}, contains a proof of the ordinary
result.
Finally, in Appendix \ref{prelim}, we recall a few
results about symmetric monoidal \cats and modules over commutative algebra
objects.

\subsection*{Prerequisites}
We assume the reader is familiar with the language of \cats, as developed
in \cite{HTT}. Notably, we make free use of the Adjoint Functor Theorem
(see \cite[5.5.2.9]{HTT}).
Even if we will recall some definitions, we will assume that the reader is
familiar with the theory of stable \cats and of $\infty$-operads as developed
in \cite{HA}, and in particular with the \cat of spectra and its symmetric
monoidal structure given by the smash product.
Some of the notations used in this paper differ from the ones in \cite{HA}.
However, we use the same notations and terminology of \cite{HA} for all the
concepts we do not explicitly recall or introduce here.

\subsection*{Relation to previous work}
A result essentially equivalent to Theorem \ref{big-thm} appeared,
using rigid models to deal with homotopy coherent structures, as
\cite[Lemma 11.2]{rognes2009topological},
and the proof sketched in \cite[Remark 11.3]{rognes2009topological} is strictly
related to the proof we present in Section \ref{subs-top-down}.

\subsection*{Aknowledgements}
Most of the results of this paper were obtained as part of the author's
M.Sc.\ Thesis at the University of Bonn in 2018. I am extremely grateful to my
advisor, Thomas Nikolaus,
for proposing me this topic and for his thorough support and his invaluable
advice. I would like to thank Jack Davies, Simone Fabbrizzi,
Giulio Lo Monaco, Fosco Loregian, Riccardo Pedrotti and Mauro Porta
for the insightful discussions I had the pleasure to be involved in while
writing this article.
\addtocontents{toc}{\protect\setcounter{tocdepth}{2}}

\section{Groups of units and square-zero extensions}\label{bigproof}
We assume the reader is familiar with the theories of stable \cats and of
$\infty$-operads, as developed in \cite{HA}. For the convenience of the
reader, and to fix notations, we recall some definitions and some results
about symmetric monoidal \cats and modules over (homotopy coherent)
commutative algebra objects in Appendix \ref{prelim}, and we refer to \cite{HA}
for all the concepts there undefined.

We focus our attention to the presentable symmetric monoidal
\cats of spaces and spectra. In Section \ref{definitionz}, we begin by
defining the
objects we want to investigate, namely groups of units and square-zero
extensions of $\einf$-rings, and by establishing some of their basic
properties.
Then, in Section \ref{subs-setup}, we begin our proof of Theorem \ref{big-thm}.
In Section \ref{subs-top-down} and \ref{subs-bottom-up} we provide two
alternative proofs, one at the spectral level, and one at the space level.

\subsection{Definitions and basic facts}\label{definitionz}
Most of the material presented in this section, is an adaptation of definitions
and results presented using rigid models for the categories of spectra
in \cite{abghr14} and \cite{rez06}. In particular, our treatment of groups
of units and group $\einf$-rings is inspired by the presentation given in
\cite{abghr14}, although our proofs are independent of the rigid model
results, and are entirely developed in the $\infty$-categorical setting.
Also, we recollect various facts regarding the properties of groups of units
of $\einf$-rings following closely \cite{rez06}, working out explicitly some
details left to the reader therein. Finally, we recall some definitions and
basic facts about square-zero extensions of $\einf$-rings from \cite{HA}.

We will reserve the notations
$\mon$, $\grp$ and $\calg$ for the \cats $\mon(\spaces)$, $\grp(\spaces)$
and $\calg(\Sp)$, respectively (see Remark \ref{prototypical-ex}).

\begin{prop}\label{umonoid-ringmonoid}
  The symmetric monoidal adjunction $\susinftyp \dashv \ominfty$, restricts to
  an adjunction
  $$\bbS[-] : \mon \rightleftarrows \calg : \ominfty_m$$
  such that the diagram
  $$
  \begin{tikzcd}
  \mon \ar[r, "\text{$\bbS[-]$}"] \ar[d] & \calg \ar[d] \\
  \spaces \ar[r, "\susinftyp"] & \Sp
  \end{tikzcd}
  $$
  (where the vertical arrows are the forgetful functors) commutes.
\end{prop}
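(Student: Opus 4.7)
The plan is to deduce this from general facts about symmetric monoidal adjunctions of $\infty$-categories, rather than by any hands-on construction. The underlying adjunction $\susinftyp \dashv \ominfty$ between $\spaces$ (with the Cartesian symmetric monoidal structure) and $\Sp$ (with the smash product) is a symmetric monoidal adjunction in the sense of \cite{HA}: the left adjoint $\susinftyp$ is strong symmetric monoidal, and consequently $\ominfty$ inherits a canonical lax symmetric monoidal structure. Granting this, the rest of the proposition should follow from formal properties of the functor $\calg(-)$.

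More precisely, I would first recall (or cite from the appendix) that any lax symmetric monoidal functor $F\colon\calC^\otimes\to\calD^\otimes$ induces by post-composition a functor $\calg(\calC)\to\calg(\calD)$. Applying this to $\susinftyp$ and to $\ominfty$ gives candidate functors, and under the identification $\calg(\spaces^\times)\simeq\mon$ the former becomes the desired $\bbS[-]\colon\mon\to\calg$ while the latter becomes $\ominfty_m\colon\calg\to\mon$. To upgrade these two functors to an adjunction, I would invoke the general principle (essentially \cite[7.3.2.7]{HA}) that a symmetric monoidal adjunction between symmetric monoidal \cats lifts to an adjunction on categories of commutative algebra objects, with unit and counit obtained by applying $\calg(-)$ to the original unit and counit.

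For the commuting square, the forgetful functors $\calg(\calC)\to\calC$ are, by construction, the functors of evaluation at $\langle 1\rangle\in\mathrm{Fin}_*$ of a section $\mathrm{Fin}_*\to\calC^\otimes$. Since the induced functor on $\calg$ is defined by post-composition with $F^\otimes$ lying over $\id_{\mathrm{Fin}_*}$, it manifestly commutes with this evaluation, giving the commutativity of the diagram on the nose.

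The main obstacle, then, is not the formal lifting step but rather pinning down the symmetric monoidal structure on the adjunction $\susinftyp\dashv\ominfty$: one needs that $\susinftyp$ is strong symmetric monoidal from $(\spaces,\times)$ to $(\Sp,\wedge)$, which is a structural fact about the construction of the smash product in \cite{HA}. Once this is in hand, the proposition is entirely a matter of applying the $\calg$-functor to the given symmetric monoidal adjunction and observing that post-composition respects the underlying-object functor.
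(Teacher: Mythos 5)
Your proposal is correct and follows essentially the same route as the paper: both define $\bbS[-]$ and $\ominfty_m$ by postcomposition with the symmetric monoidal structure map induced by $\susinftyp$ and the lax structure on $\ominfty$, and both then lift the adjunction formally (the paper does this via the Riehl--Verity 2-categorical characterization of adjunctions, whiskering the original unit and counit, whereas you cite the general lifting result for symmetric monoidal adjunctions on commutative algebra objects --- a cosmetic difference). The commutativity of the square with the forgetful functors is argued identically, as evaluation at $\langle 1\rangle$ visibly commutes with postcomposition over $\einf\operad$.
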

\begin{proof}
  Recall that
  $\mon\simeq\mathrm{Alg}_{\einf\operad}(\spaces)\subset\Fun_{\einf\operad}
  \left(\einf\operad,\spaces^\times\right)$.
  The functor $\bbS[-]$ is obtained by postcomposition with the symmetric
  monoidal functor $\spaces^{\times} \to \Sp\operad$ induced by $\susinftyp$,
  and $\ominfty_m$ is obtained analogously.

  As explained in \cite{RV1,RV4}, to give an adjunciton between \cats is
  equivalent to give an adjunction in the underlying 2-category of
  quasi-categories.
  Given unit and counit transformations for the adjunction $\susinftyp \dashv
  \ominfty$, we obtain unit and counit transformations for $\bbS[-]$ and
  $\ominfty_m$ by precomposition with (the underlying 1-cells of) sections of
  the symmetric monoidal structure maps, thus satisfying the triangle
  identities automatically.
\end{proof}

\begin{defi}
  Given an $\einf$-ring $R$, we call $\ominfty_m R$ its \emph{underlying
  multiplicative $\einf$-space}.
  Given an $\einf$-space $X$, we call $\bbS[X]$ its \emph{monoid $\einf$-ring}.
\end{defi}

A few words on notation and terminology are overdue. To see the reasons for
the analogy with the ordinary case, consider an $\einf$-ring $R$.
By definition, $R$ is a commutative algebra object in the \cat of spectra.
The functor $\ominfty_m$ acts by ``forgetting'' the additive
structure of $R$ (falling back from spectra
to spaces), but still remembering its multiplicative structure,
as it takes values in the \cat of commutative algebra objects in
$\spaces$.
The analogy is particularly clear if we restrict to connective $\einf$-rings.
By Remark \ref{infinite-loops}, the \cats $\Sp^{cn}$ and $\grp$ are
equivalent, hence the \cat of connective $\einf$-rings $\calg^{cn}$ is
equivalent to the \cat $\calg(\grp)$.
As the restriction of $\ominfty$ to $\Sp^{cn}$ corresponds to the forgetful
functor $\grp \to \spaces$ under this equivalence, the functor
$\ominfty_m$ restricts to $\calg(\grp) \to \calg(\spaces) \simeq \mon$.

\begin{prop}\label{maximal-subgrp}
  The forgetful functor $\grp \to \mon$ admits a right adjoint, denoted
  $$(-)^\times\colon \mon \to \grp.$$
\end{prop}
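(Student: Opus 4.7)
The plan is to apply the adjoint functor theorem \cite[5.5.2.9]{HTT}. Both \cats involved are presentable: the \cat $\mon$ is an \cat of algebras over a coherent $\infty$-operad in a presentable symmetric monoidal \cat (see \cite[3.2.3.5]{HA}), and $\grp$ is presentable because it is equivalent to $\Spcn$ (cf.\ Remark \ref{infinite-loops}). Moreover, the forgetful functor $\grp\to\mon$ is a full-subcategory inclusion: a morphism of commutative monoid objects between group-like objects is automatically a morphism of group objects. Hence the task reduces to verifying that this inclusion preserves small colimits.

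To this end, I would take a small diagram $\{G_i\}_{i\in I}$ in $\grp$, set $M:=\colim_i G_i$ computed in $\mon$, and check that $M$ is group-like, i.e., that $\pi_0 M$ is a group. The functor $\pi_0\colon\spaces\to\mathrm{Set}$ is symmetric monoidal and left adjoint to the inclusion of sets as discrete spaces, so it induces a symmetric monoidal left adjoint $\pi_0\colon\mon\to\mon(\mathrm{Set})$, where $\mon(\mathrm{Set})$ denotes the ordinary category of commutative monoids. In particular $\pi_0 M \cong \colim_i \pi_0 G_i$ in $\mon(\mathrm{Set})$, and each $\pi_0 G_i$ is an abelian group. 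The conclusion then follows from the classical fact that a colimit of abelian groups computed inside commutative monoids is itself a group: every element of such a colimit is a product of images of elements from the $G_i$, each of which is invertible in its source, hence in the colimit.

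A more concrete alternative constructs $(-)^\times$ directly. For $M\in\mon$, one defines $M^\times$ as the pullback
\[
\begin{tikzcd}
M^\times \ar[r] \ar[d] & M \ar[d] \\
\pi_0(M)^\times \ar[r,hook] & \pi_0(M)
\end{tikzcd}
\]
in $\mon$, where $\pi_0(M)^\times\subseteq\pi_0(M)$ is the submonoid of invertible elements and $\pi_0(M)$ is regarded as a discrete object of $\mon$. Since limits in $\mon$ are computed underlying $\spaces$, $M^\times$ is the union of those connected components of $M$ whose classes in $\pi_0(M)$ are invertible, endowed with the inherited monoid structure; it is group-like by construction. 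The universal property is then immediate: any map $G\to M$ from a group-like object induces $\pi_0 G\to \pi_0 M$ landing inside $\pi_0(M)^\times$, so it lifts uniquely to a map $G\to M^\times$.

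The main obstacle will be the colimit-preservation argument: it rests on the ordinary-categorical statement about colimits of groups in commutative monoids, transported across $\pi_0$. Once this is granted, both the abstract and the explicit version of the argument are formal.
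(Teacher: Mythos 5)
Your argument is correct and follows essentially the same route as the paper, which likewise deduces the right adjoint from the Adjoint Functor Theorem once the inclusion $\grp\to\mon$ is known to preserve colimits; the paper simply cites \cite[5.2.6.9]{HA} for that colimit-closure statement, whereas you verify it by hand via $\pi_0$. Your explicit pullback construction of $(-)^\times$ reproduces what the paper records separately in Remark \ref{explicit-subgrp}.
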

\begin{proof}
  This is a direct consequence of \cite[5.2.6.9]{HA} and the Adjoint Functor
  Theorem.
\end{proof}

\begin{rem}\label{explicit-subgrp}
  The functor $(-)^\times$ can be explicitly described as the functor sending
  an $\einf$-space $M$ to its connected components which are invertible in
  the commutative monoid $\pi_0 M$ (see Remark \ref{ordinary-monoids}), and
  the counit of the adjunction as the direct summands inclusion. To see
  this, let us momentarily denote by $M\units$ those connected components of
  $M$ which are invertible in $\pi_0 M$, and let us denote by $\iota\colon
  M\units \to M$ the direct summands inclusion. Clearly, given any
  $\einf$-group $G$, postcomposition with $\iota$ determines a map
  $$\Map_{\grp}(G,M\units) \xto{\iota_*} \Map_{\mon}(G,M).$$
  On the other hand, any element in $\Map_{\mon}(G,M)$ induces a morphism
  of ordinary monoids $\pi_0 G \to \pi_0 M$, which, since $\pi_0 G$ is a group,
  factors through $(\pi_0 M)^\times$. As a consequence, the actual morphism
  $G\to M$ we started from has to factor through $\iota$. A straightforward
  check shows that this factorization gives a homotopy inverse to $\iota_*$.
\end{rem}

Using the notation given in Appendix \ref{prelim},
we think of $(-)\gp$ as group completion and of $(-)^\times$ as the maximal
subgroup of an $\einf$-space.

\begin{notat}\label{notat-gl}
  We denote the composite right adjoint $(-)^\times \circ \ominfty_m$ by
  $$\GL\colon \calg \to \grp$$
  and its left adjoint, using a notation analogous to the common one for
  the ordinary case, again by $\bbS[-]$; we denote the composite functor
  $\Binf \GL$ by
  $$\gl\colon \calg \to \Sp$$
  (see Proposition \ref{ladjs-factorization} for the definition of $\Binf$).
  Given an $\einf$-ring $R$, we will use the term \emph{$\einf$-group of
  units} of $R$ to refer to $\GL R$; we will call $\gl R$ the \emph{units-group
  spectrum} of $R$. We will sometimes just use the term \emph{group of units}
  to refer to either of the two.
\end{notat}

\begin{notat}
  Let $X$ be a space, and let $R$ be an $\einf$-ring. We will use the notation
  $$H^\bullet \left ( X ; \gl R \right )$$
  to denote the cohomology groups
  $$ \pi_0 \Map_\Sp \left ( \susinftyp X, \gl R [\bullet] \right ).$$
  Given any pointed space $X$, we will denote by
  $$\wt H^\bullet \left ( X ; \gl R \right )$$
  the reduced cohomology groups
  $$ \pi_0 \Map_\Sp \left ( \susinfty X, \gl R [\bullet] \right ).$$
\end{notat}

\begin{rem}\label{GL-to-mult}
  The counit of the adjunction given by Remark \ref{explicit-subgrp} gives
  a natural transformation $\iota\colon \GL \To \ominfty_m$. It follows from the
  same remark that, given any $\einf$-ring $R$, the map $\iota_R$ fits into
  a Cartesian square
  $$
  \begin{tikzcd}
  \GL R \ar[r, "\iota_R"] \ar[d] & \ominfty_m R \ar[d] \\
  (\pi_0 R)^\times \ar[r] & \pi_0 R
  \end{tikzcd}
  $$
  in the \cat $\mon$ (where both $(\pi_0 R)^\times$ and $\pi_0 R$ are
  considered as discrete $\einf$-spaces) and thus in the \cat $\spaces$.
  In particular, as all the path components of a grouplike H-space
  are homotopy equivalent, $\gl R$ is a connective spectrum having as
  homotopy groups:
  \begin{displaymath}
  \pi _n (\gl R) =
  \begin{cases}
      \pi _0 (R)^\times \quad &\text{for } n=0 \text{;}\\
      \pi _n (R) &\text{for } n \geq 1\text{.}
  \end{cases}
  \end{displaymath}
\end{rem}

\begin{rem}
  As a consequence of Remark \ref{GL-to-mult}, we have that for any
  $\einf$-ring $R$, the following isomorphism of groups holds
  $$H^0 ( X ; \gl R) \simeq \left ( R^0 X \right )^\times.$$
  In fact, as the (multiplicative) commutative monoid structure of
  $$R^0 X \simeq \pi_0 \Map_\spaces (X,\ominfty R)$$ is determined by the
  commutative monoid object structure of $\ominfty R$ in the homotopy
  category $\ho\spaces$, the
  maps $X \to \ominfty R$ factoring through $\GL R$ correspond exactly to
  the invertible elements in $R^0 X$.
\end{rem}

\begin{rem}\label{natural-augmentation}
  Given any $\einf$-monoid $X$, the $\einf$-ring $\bbS[X]$ comes with an
  augmentation map $\bbS[X]\to\bbS$ that is natural and compatible with all
  the relevant adjunctions (see Remark \ref{explicit-augmentation} for an
  explicit description of this map).

  More precisely, both functors denoted $\bbS[-]$ factor through
  the \cat $\calg_{/\bbS}$ of
  \emph{augmented $\einf$-rings}. In fact, since $\bbS[0] \simeq \bbS$,
  we have an induced functor
  $$\grp \simeq \grp_{/0} \to \calg_{/\bbS}$$
  whose composition with the forgetful functor $\calg_{/\bbS} \to \calg$ is
  precisely $\bbS[-]$.
  By \cite[1.2.13.8]{HTT}, the forgetful functor $\calg_{/\bbS} \to \calg$
  preserves colimits; hence, by the Adjoint Functor Theorem it
  admits a right adjoint. As a consequence, we have that the adjunction
  $\bbS[-] \dashv \ominfty_m$ (resp. $\bbS[-] \dashv \GL$) factors as a
  composite adjunction
  $$\mon\rightleftarrows\calg_{/\bbS}\rightleftarrows\calg$$
  (resp.
  $\grp\rightleftarrows\calg_{/\bbS}\rightleftarrows\calg$).
\end{rem}

\begin{rem}\label{pointed-splitting}
  Given any connected pointed space $X$, the terminal morphism $X\to\pt$ admits
  as a section the basepoint inclusion $\pt \to X$.
  By applying the free basepoint functor $(-)_+$ introduced in Proposition
  \ref{ladjs-factorization} to the basepoint inclusion, we obtain a pointed
  map $S^0 \to X_+$, which in turn fits into a cofiber sequence of pointed
  spaces
  $$S^0 \to X_+ \to X$$
  where the last map is the counit of the free-forgetful adjunction
  $\spaces \rightleftarrows \spaces_*$.
  By applying the functor $\susinfty$ to the above sequence, we get a
  co/fiber sequence in $\Sp$:
  $$\bbS \to \susinftyp X \to \susinfty X.$$
  Since the first map admits a retraction (the image under $\susinftyp$ of
  $X\to\pt$), the co/fiber splits and
  $$\susinftyp X \simeq \susinfty X \oplus \bbS.$$
  Following \cite{rez06}, we will denote by
  $$\gamma _X\colon \susinfty X \to \susinftyp X$$ the section (well-defined
  up to homotopy)
  of the projection $\susinftyp X \to \susinfty X$.
\end{rem}

\begin{rem}\label{explicit-augmentation}
  The above remark lets us give an explicit description of the
  augmentation map of a monoid or group $\einf$-ring. Let $X$ be an
  $\einf$-space, and let $a\colon \bbS[X]\to\bbS$ be its monoid $\einf$-ring,
  together with the augmentation map given by Remark
  \ref{natural-augmentation}. Looking at the map underlying $a$ in $\Sp$, we
  have that it is obtained by applying the functor
  $\susinftyp$ to the terminal morphism $X \to \pt$, hence it is given by the
  morphism
  $$\bbS \oplus \susinfty X \to \bbS\oplus 0
    \simeq \bbS$$
  acting as the identity on $\bbS$, and as the zero morphism elsewhere,
  informally written as $\begin{pmatrix}\id_\bbS &0\\0&0\end{pmatrix}$.
\end{rem}

\begin{notat}
  Let $A$ and $B$ be spaces, let $X$ and $Y$ be pointed spaces and let $H$ and
  $K$ be spectra.
  We will sometimes use the notations
  $$[A,B]\text{,} \quad [X,Y]_* \quad \text{and} \quad [H,K]$$
  to denote
  $$\pi_0 \Map_{\spaces}(A,B)\text{,} \quad
  \pi_0 \Map_{\spaces_*}(X,Y) \quad \text{and} \quad \pi_0 \Map_\Sp(H,K)$$
  respectively.
\end{notat}

\begin{rem}\label{rem-splitting-subgroup}
  Let $E$ be a spectrum. The splitting discussed in Remark
  \ref{pointed-splitting} induces the usual direct sum decomposition
  for the unreduced cohomology of a pointed space, i.e.\ for every $n \in \bbZ$,
  it induces an isomorphism
  \begin{displaymath}
  \begin{split}
    E^n(X) &\simeq \Big[\susinftyp X[-n] ,E \Big] \\
    &\simeq \Big[\susinfty X[-n], E \Big] \oplus
    \Big[\bbS[-n], E \Big]\\
    &\simeq \vphantom{\Big[} \wt E^n(X) \oplus \pi_{-n} E.
  \end{split}
  \end{displaymath}
  In particular, this, together with the isomorphism $H^0 (X; \gl R) \simeq
  \left( R^0 X \right)^{\times}$ implies that
  \begin{displaymath}
  \begin{split}
  H^0 (X;\gl R) &\simeq
  \left( \widetilde R^0 X \oplus \pi_0 R \right)^{\times}\\
  &\simeq \left[\susinfty X, \gl R\right]\oplus\left[\bbS,\gl R\right]\\
  &\simeq \widetilde
  H^0(X; \gl R) \oplus \left( \pi_0 R \right)^{\times}.
  \end{split}
  \end{displaymath}
  Thence, the retraction $\susinftyp X\to\susinfty X$ induces an
  isomorphism
  $$\widetilde H^0 \left(X;\gl R\right) \simeq
  \left( \widetilde R^0 X + 1_{\pi_0 R}\right)^{\times}
  \subset\left( \widetilde R^0 X \oplus \pi_0 R \right)^{\times}.$$
\end{rem}

\begin{defi}\label{def-bpshift}
  Let $(X,*)$ be a pointed space and let $E$ be a spectrum. Given an
  unpointed map
  $f\colon X\to\ominfty E$, we define the \emph{basepoint shift} $\bpshift(f)$
  of $f$ to be the composite pointed map
  \begin{displaymath}
  X \simeq X\times\pt\xto{\id\times *} X\times X
    \xto{f\times\left(\inv\circ f\right)}
  \ominfty E \oplus \ominfty E \xto{\mu} \ominfty E
  \end{displaymath}
  where $\mu$ denotes the multiplication map on $\ominfty E$, and
  $\inv$ denotes its inversion map (both given equivalently by the fact that
  $\ominfty E$ is the space underlying the additive $\einf$-group
  $\ominfty_a E$\footnote{see Notation \ref{notat-adj-appendix}}, or
  by Remark \ref{additive-spectra}).
  Informally, we have that $$\bpshift(f)(x) = f(x) - f(*).$$
\end{defi}

\begin{rem}\label{sh-group-hom}
  The assignment $\bpshift\colon f \mapsto \bpshift(f)$ determines a function
  $$
  \begin{tikzcd}
  {[X,\ominfty E]} \ar[r, "\bpshift"] \ar[d, "\simeq"]&
    {[X,\ominfty E]_*} \ar[d, "\simeq"] \\
  \wt E^0 X \oplus \pi_0 E \ar[r] & \wt E^0 X
  \end{tikzcd}
  $$
  corresponding to the projection of unreduced cohomology to the reduced
  cohomology direct summand.
  That is, the map $$E^0 X \to \wt E^0 X$$ induced by
  $\bpshift$ corresponds to the map
  $$[\susinftyp X, E] \xto{(\gamma_X)_*} [\susinfty X, E]$$
  induced by precomposition with the inclusion $\gamma_X$ introduced in Remark
  \ref{pointed-splitting}. In other words, given any map
  $f \in [X,\ominfty E]$, the map $\bpshift(f)$ is the adjoint of the composite
  map $f^\flat \circ \gamma_X \in [\susinfty X, E]$, $f^\flat$ being the
  adjoint map of $f$. In particular, $\bpshift$ preserves whatever additive
  or multiplicative structure the spectrum $E$ may induce on the set
  $[\susinftyp X, E]$.
\end{rem}

Let $R$ be an $\einf$-ring.
The map $$\iota_R\colon \GL R \to \ominfty_m R$$
given by Remark \ref{GL-to-mult} is of course unpointed.
Given any pointed space $X$, we have an
induced morphism $$(\iota_R)_*\colon [X,\GL R]_*\to[X,\ominfty R]$$ and,
by postcomposition
with $\bpshift$, we get a morphism $$\bpshift\circ(\iota_R)_*\colon [X,\GL R]_*
\to [X,\ominfty R]_*.$$
We want to prove that for $X=S^k$ and $k\geq 1$, the above map is an
isomorphism of Abelian groups.

\begin{prop}
\label{pigl}
  Let $R$ be an $\einf$-ring.
  Given any $k \geq 1$, the map $\textsc{Sh}\circ (\iota_R)_*$ induces an
  isomorphism
  $$[S^k,\GL R]_* \simeq [S^k,\ominfty R]_*.$$
\end{prop}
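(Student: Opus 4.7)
The plan is to exhibit $\bpshift \circ (\iota_R)_*$ as being induced, at the level of underlying spaces, by a homotopy equivalence between two path components of $\ominfty R$. The starting point is that, since $k \geq 1$, the sphere $S^k$ is path-connected, so any pointed map $f\colon S^k \to \GL R$ factors through the path component $(\GL R)_1$ of the basepoint $1$. By Remark \ref{explicit-subgrp}, $\GL R$ is a disjoint union of path components of $\ominfty_m R$, so $(\GL R)_1$ agrees, as a space, with the path component $(\ominfty R)_1$ of $1 \in \ominfty R$.

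Unwinding Definition \ref{def-bpshift}, the composite $\bpshift \circ (\iota_R)_*$ sends $f$ to the pointed map $x \mapsto \iota_R(f(x)) - 1$, where subtraction is taken in the additive $\einf$-group $\ominfty_a R$. Because $\iota_R \circ f$ lands in $(\ominfty R)_1$, this is exactly the postcomposition of $f$ with the translation map
$$t_{-1}\colon (\ominfty R)_1 \to (\ominfty R)_0, \qquad y \mapsto y - 1.$$
Since $\ominfty R$ is grouplike with respect to its additive $\einf$-structure, its path components are canonically equivalent via additive translations, so $t_{-1}$ is a homotopy equivalence with inverse $t_{+1}(z) = z + 1$.

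To finish, I would construct an explicit inverse to $\bpshift \circ (\iota_R)_*$: a pointed map $g\colon S^k \to \ominfty R$ necessarily lands in $(\ominfty R)_0$ for $k \geq 1$, and postcomposing with $t_{+1}$ produces a map into $(\ominfty R)_1 = (\GL R)_1 \subseteq \GL R$, now pointed at $1$. That this is a two-sided inverse at the level of $\pi_0 \Map$ follows from $t_{\pm 1}$ being mutually inverse homotopy equivalences. The main thing to keep track of is the distinction between the additive and multiplicative $\einf$-structures on $\ominfty R$: the crux of the argument is that additive translation by the \emph{multiplicative} unit $1$ identifies the two basepoints $1$ and $0$; that $\bpshift$ respects the Abelian group structures induced by $\ominfty_a R$ (Remark \ref{sh-group-hom}) then upgrades the bijection to an isomorphism of Abelian groups.
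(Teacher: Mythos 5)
Your proof is correct and follows essentially the same route as the paper's: both recognize $\bpshift\circ(\iota_R)_*$ as a change of path component from $1$ to $0$ and invert it by translating back by $+1$ (the paper writes this translation as the sum $-+\mathbbm{1}$ coming from the cogroup structure on $S^k$ and returns to $\GL R$ via the pullback square of Remark \ref{GL-to-mult}, where you instead use the identification of $(\GL R)_1$ with $(\ominfty R)_1$ directly). The only imprecision is your final appeal to Remark \ref{sh-group-hom} for the group-isomorphism claim --- the group structure on $[S^k,\GL R]_*$ is the multiplicative one, not one induced by $\ominfty_a R$ --- but your own identification of the composite as postcomposition with the single pointed map $t_{-1}\circ\iota_R$ already makes it a homomorphism for the structures coming from the cogroup $S^k$, which is all that is needed.
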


\begin{proof}
  $[S^k,\GL R]_*$ consists of homotopy classes of maps $S^k \to \GL R$ sending
  $S^k$ to the path component $1_{\pi_0 R} \in \left(\pi_0 R \right)^{\times}$.
  By postcomposing with $\iota_R\colon \GL R \to \ominfty R$
  (which is an unpointed
  map), we get to the subset
  $$(\iota_R)_* [S^k,\GL R]_* \subset [S^k,\ominfty R]$$
  of maps sending $S^k$ to the path component $1_{\pi_0 R}\in \pi_0 R$
  that are pointed at $1_{\pi_0 R}$;
  thus, compatibly with Remark \ref{rem-splitting-subgroup},
  $\iota_R$ realizes the inclusion of the subgroup
  $$\left( \wt R^0 S^k + 1_{\pi_0 R} \right)^\times$$
  in the multiplicative monoid of the ring $R^0 S^k$.
  If we now compose with the map $\bpshift$ intruduced in Remark
  \ref{sh-group-hom}, we get to pointed maps $[S^k,\ominfty R]_*$, or
  equivalently, to the group $\pi_k R \simeq \wt R^0 S^k \subset
  (R^0 S^k, \cdot_{R_0})$
  Our claim is that the map
  \begin{equation}
  \label{shifti}
      [S^k,\GL R]_* \xrightarrow{\textsc{Sh}\circ(\iota_R)_*}
          [S^k,\ominfty R]_*
  \end{equation}
  is a group isomorphism.
  By construction, $(\iota_R)_*$ is a group homomorphism into the
  multiplicative structure of its target, whereas by Remark
  \ref{sh-group-hom}, $\bpshift$ induces a homomorphism between the
  multiplicative structures of its source and target; hence
  it is sufficient to provide an inverse (as a set-map) to
  $\bpshift \circ (\iota_R)_*$.

  As (\ref{shifti}) is just a change of connected component, from
  $1_{\pi_0 R}$ to $0_{\pi_0 R}$, it is sufficient
  to realize the inverse change. To this end, let us consider the map
  \begin{displaymath}
      [S^k,\ominfty R]_* \xrightarrow{- + \mathbbm{1}}
          [S^k,\ominfty R]
  \end{displaymath}
  where $\mathbbm{1}$ denotes the map $S^k \to S^0 \xrightarrow{1_{\pi_0 R}}
  \ominfty R$, and the sum comes from the cogroup object structure on $S^k$.
  Under $- + \mathbbm{1}$, all the maps in $[S^k,\ominfty R]_*$
  are shifted to the path component of $1_{\pi_0 R} \in \pi_0 R$.
  By Remark \ref{GL-to-mult},
  each homotopy class in $[S^k,\ominfty R]_* + \mathbbm{1}$ determines a
  (unique, up to homotopy) map $S^k \to \GL R$, sending $S^k$ to the path
  component of $1_{\pi_0 R} \in \GL \pi_0 R$. In particular, the
  universal property of the pullback induces a map
  \begin{equation}
  \label{hotinvshifti}
      [S^k,\ominfty R]_* \longrightarrow
          [S^k,\ominfty R]_* + \mathbbm{1}
          \longrightarrow [S^k,\GL R]_*.
  \end{equation}
  As this composition has the effect of shifting from the connected component
  of $0_{\pi_0 R}$ to that of $1_{\pi_0 R}$, this is an inverse for
  (\ref{shifti}).
\end{proof}

\begin{rem}
  It follows from Remark \ref{rem-splitting-subgroup} that
  $(\wt R^0 S^k + 1_{\pi_0 R})^\times \simeq
    \wt R^0 S^k$.
  The proof of Proposition \ref{pigl} shows that, moreover, an isomorphism is
  given by $x + 1 \mapsto x$.
\end{rem}

\subsection{Square-zero extensions}\label{subs-sq-zero}
We will briefly recall the definitions of (trivial
and general) square-zero extensions of $\einf$-rings, referring the reader
to \cite[Chapter 7]{HA} for a detailed treatment.

\begin{rem}\label{triv-sq-zero-structure}
Let $A$ be an $\einf$-ring. In \cite[7.3.4.15]{HA} a functor
$$A \oplus - \colon \Mod_A \to \calg_{/A}$$
is constructed, sending an $A$-module $M$ to the \emph{trivial square-zero
extension of $A$ by $M$}. As explained in \cite[7.3.4.16]{HA} the notation
is motivated by the fact that it sends each object
$M \in \Mod_A$ to a commutative $\einf$-ring over $A$ whose
underlying spectrum is equivalent to $A \oplus M$ in $\Sp$.
The algebra structure on $A \oplus M$ is
``square-zero'' in the homotopy category of $\calg$, as the following
facts hold:
\begin{enumerate}
\item The unit map $\bbS \to A \oplus M$ is homotopic to the composition of
  the unit map $\bbS \to A$ with the inclusion $A \to A \oplus M$.
\item The multiplication
  $$(A \otimes A) \oplus (A \otimes M) \oplus (M \otimes A) \oplus
    (M \otimes M) \simeq (A \oplus M) \otimes (A \oplus M) \to A \oplus M$$
  is given as follows:
  \begin{enumerate}
  \item[$\bullet$] On $A\otimes A$, it is homotopic to the composition of the
    multiplication of $A$ with the inclusion $A \to A \oplus M$.
  \item[$\bullet$] On $A \otimes M$ and $M \otimes A$, it is given by
    composition of the action of $A$ on $M$ with the inclusion
    $M \to A \oplus M$.
  \item[$\bullet$] On $M \otimes M$ it is nullhomotopic.
  \end{enumerate}
\end{enumerate}
\end{rem}

\begin{rem}
  The functor $A \oplus -$ introduced above admits a left adjoint
  $$\calL_A \colon \calg_{/A} \to \Mod_A$$
  whose value on $A$ (with the identity as structure map) is denoted $L_A$
  and called the \emph{cotangent complex} of $A$.
\end{rem}

In order to describe the behavior of trivial square-zero extensions under
restriction of scalars, we need to recall the following result.

\begin{prop}\cite[7.3.4.14]{HA}\label{stabilization-of-algebras}
  Let $A$ be an $\einf$-ring. There is a canonical equivalence of \cats
  $$\Sp(\calg_{/A}) \simeq \Mod_A .$$
\end{prop}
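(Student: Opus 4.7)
The plan is to construct an explicit equivalence between $\Mod_A$ and $\Sp(\calg_{/A})$ using the trivial square-zero extension of Remark \ref{triv-sq-zero-structure} together with its partial inverse given by the augmentation ideal.

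First, I would unwind the target. Since $\calg_{/A}$ has $A$ as terminal object but not as zero object, $\Sp(\calg_{/A})$ is by definition $\Sp((\calg_{/A})_*)$, where the pointed objects are the augmented $\einf$-algebras over $A$, i.e., factorizations $A \to B \to A$ of $\id_A$; in this pointed \cat, $A$ itself (with identities) is a zero object, and $\Omega$ and spectrum objects are therefore available. Next, I would construct a candidate $\Phi\colon \Mod_A \to \Sp(\calg_{/A})$ from the trivial square-zero extension functor $A \oplus -$. Using Remark \ref{triv-sq-zero-structure}, one checks that $A \oplus -$ commutes with $\Omega$, i.e., $\Omega(A \oplus M) \simeq A \oplus \Omega M$ as augmented $A$-algebras (the algebra multiplication restricts to zero on the fiber of the augmentation). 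Since $\Omega\colon \Mod_A \to \Mod_A$ is an equivalence, the universal property of stabilization yields a functor $\Phi$ with $\ominfty \circ \Phi \simeq A \oplus -$; informally, $\Phi(M) = \{A \oplus M[n]\}_{n \geq 0}$.

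A candidate inverse $\Psi\colon \Sp(\calg_{/A}) \to \Mod_A$ is obtained by applying $\Sp(-)$ to the augmentation ideal functor $B \mapsto \mathrm{fib}(B \to A)\colon (\calg_{/A})_* \to \Mod_A$, and then identifying $\Sp(\Mod_A) \simeq \Mod_A$, which holds because $\Mod_A$ is already stable. Since $\mathrm{fib}(A \oplus M \to A) \simeq M$, we have $\Psi \circ \Phi \simeq \id_{\Mod_A}$, so $\Phi$ is automatically fully faithful.

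The main obstacle is the converse composite $\Phi \circ \Psi \simeq \id_{\Sp(\calg_{/A})}$, equivalently the essential surjectivity of $\Phi$: every spectrum object $B_\bullet \in \Sp(\calg_{/A})$ should be canonically equivalent to $\{A \oplus M[n]\}_n$ for $M = \Psi(B_\bullet)$. Heuristically, $B_0$ is an infinite-fold loop object in augmented $A$-algebras, so its augmentation ideal $J$ inherits, in addition to its intrinsic algebra multiplication $J \otimes J \to J$, a second, compatible $\einf$-multiplicative structure coming from the deloopings; an Eckmann--Hilton style argument then forces the intrinsic multiplication to be nullhomotopic, making $B_0 \simeq A \oplus J$ as augmented $A$-algebras. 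Turning this heuristic into a rigorous $\infty$-categorical statement is the technical heart of the matter, and is precisely what is carried out in \cite[\S 7.3.4]{HA}.
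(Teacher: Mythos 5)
Your proposal is correct and takes essentially the same route as the paper: both reduce to spectrum objects in augmented $A$-algebras and let the augmentation-ideal (fiber) functor induce the equivalence on stabilizations, with the substantive step deferred to \cite[\S 7.3.4]{HA} (the paper cites \cite[7.3.4.7]{HA}, after first passing through the equivalence $\calg^{A/}\simeq\calg(\Mod_A)$ --- which is also what supplies the $A$-module structure on $\mathrm{fib}(B\to A)$, a point you leave implicit). One small slip: $\Psi\circ\Phi\simeq\id$ alone does not make $\Phi$ fully faithful (it only exhibits the mapping spaces of $\Mod_A$ as retracts of those of $\Sp(\calg_{/A})$), but this is harmless, since the two composite identities you actually set out to prove already constitute the desired equivalence.
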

\begin{proof}
  It follows from \cite[1.4.2.18]{HA} that $\Sp(\calg_{/A}) \simeq
  \Sp(\calg_{/A}^{A/})$. Moreover, by \cite[3.4.1.7]{HA}, there exists an
  equivalence $\calg^{A/} \simeq \calg(\Mod_A)$. Combining the two, we get:
  $$\Sp(\calg_{/A}) \simeq \Sp(\calg(\Mod_A)_{/A}).$$
  The objects in $\calg(\Mod_A)_{/A}$ are, by definition, equipped with a
  structure map with target $A$, in the \cat $\Mod_A$. Hence, taking fibers of
  the structure maps gives a functor $\calg(\Mod_A)_{/A}\to\Mod_A$, which,
  being left exact,
  in turn induces a functor
  $$
  \begin{tikzcd}
  {\mathrm{Exc}_*(\spaces_*^{\mathrm{fin}},\calg(\Mod_A)_{/A})} \ar[d, equal] \ar[r]&
  {\mathrm{Exc}_*(\spaces_*^{\mathrm{fin}},\Mod_A)} \ar[d, equal] \\
  \Sp(\calg(\Mod_A)_{/A})\ar[r] & \Sp(\Mod_A) \simeq \Mod_A
  \end{tikzcd}
  $$
  by pointwise composition
  (where the bottom right equivalence is due to the fact that $\Mod_A$
  is stable). To
  conclude, \cite[7.3.4.7]{HA} shows that this last functor is an equivalence.
\end{proof}

\begin{rem}\label{scalars-restriction-pullback}
  Let $f\colon A \to B$ be a morphism of $\einf$-rings. Then, there is an induced
  adjunction $\calg_{/A} \rightleftarrows \calg_{/B}$, where the left adjoint
  is given by postcomposition, and the right adjoint is given by pullback along
  $f$. Let us denote by $f^*\colon\calg_{/B}\to\calg_{/A}$ the right adjoint
  functor.
  The functor $f^*$ restricts to a functor
  $$F\colon \calg_{/B}^{B/} \to \calg_{/A}^{A/}$$
  by commutativity of the following diagram
  $$
  \begin{tikzcd}
  A \ar[rr] \ar[ddr, bend right, equal] \ar[dr, dotted]& &
    B \ar[ddr, bend right, equal] \ar[dr]&  \\
    & {R \times_B A} \ar[d] \ar[rr, crossing over] & & R \ar[d] \\
    & A \ar[rr] & & B.
  \end{tikzcd}
  $$
  The functor $F$, in turn, induces by pointwise composition the horizontal
  functors in the following diagram
  $$
  \begin{tikzcd}
  {\mathrm{Exc}_*(\spaces_*^{\mathrm{fin}},\calg_{/B}^{B/})} \ar[d, equal] \ar[r]&
    {\mathrm{Exc}_*(\spaces_*^{\mathrm{fin}},\calg_{/A}^{A/})} \ar[d, equal] \\
  {\Sp(\calg_{/B}^{B/})}\ar[r] \ar[d, "\simeq"] &
    {\Sp(\calg_{/A}^{A/})} \ar[d, "\simeq"] \\
  {\Sp(\calg(\Mod_B)_{/B})} \ar[r]\ar[d, equal]&\Sp(\calg(\Mod_A)_{/A})\ar[d, equal]\\
  {\mathrm{Exc}_*(\spaces_*^{\mathrm{fin}},\calg(\Mod_B)_{/B})}\ar[r]&
    {\mathrm{Exc}_*(\spaces_*^{\mathrm{fin}},\calg(\Mod_A)_{/A})}.
  \end{tikzcd}
  $$
  By inspection (recall that in Proposition \ref{stabilization-of-algebras}
  the equivalence
  $\calg(\Mod_A)_{/A} \simeq \Mod_A$ was induced by taking fibers of
  the structure maps), this functor is equivalent to the
  restriction of scalars functor $f^!\colon \Mod_B \to \Mod_A$.
  In particular, this implies that, given any $B$-module $M$, the $\einf$-rings
  $(B \oplus M) \times_B A$ and $A \oplus f^!M$ are equivalent, and that
  the square
  $$
  \begin{tikzcd}
  {A \oplus f^! M} \ar[r] \ar[d] & {B \oplus M} \ar[d] \\
  A \ar[r, "f"] & B
  \end{tikzcd}
  $$
  is Cartesian in the \cat $\calg$.
\end{rem}

\begin{defi}
  Let $R$ be an $\einf$-ring, and let $M$ be an $R$-module. We define a
  \emph{derivation} from $R$ to $M$ to be a map of $R$-modules
  $$L_R \to M.$$
\end{defi}

As it is clear, a derivation is equivalently determined (up to a contractible
space of choices) by its adjoint map $R \to R \oplus M$ (over $R$). If
$\eta\colon L_R \to M$ is a derivation, we will denote its adjoint map by
$d_\eta\colon R \to R \oplus M$.

\begin{defi}
  Let $R$ be an $\einf$-ring, and let $\eta\colon L_R \to M$ be a derivation from
  $R$ to an $R$-module $M$. Let $\phi\colon\wt R \to R$ be a morphism in $\calg$.
  We will say that $\phi$ is a \emph{square-zero extension} if there exists a
  Cartesian square
  $$
  \begin{tikzcd}
    \wt R \ar[r, "\phi"] \ar[d] & R \ar[d, "d_\eta"] \\
    R \ar[r, "d_0"] & R \oplus M
  \end{tikzcd}
  $$
  in the \cat $\calg$ (where $d_0$ is adjoint to the zero map $L_R \to M$).
  In this case, we will also say that $\wt R$ is a \emph{square-zero extension
  of $R$ by $M[-1]$}.
\end{defi}

\begin{rem}
  Differently from the classical case, being square-zero is not a property
  of an extension, but an additional structure.
  Given a square-zero extension $f\colon \wt R \to R$, in general, both the
  module $M$ and the derivation $\eta$ need not be uniquely determined, even
  up to equivalence.
  However, this happens in the most common situations (see
  \cite[7.4.1.26]{HA}).
\end{rem}

We conclude this section giving a characterization of the values of
$\calL_\bbS$ for monoid and group $\einf$-rings. This result will constitute
a key step for the first proof of Theorem \ref{big-thm}. A version of
Proposition
\ref{key-step} is proved in \cite[6.1]{BM05} using rigid models for the
categories of spectra. After setting up a few preliminary results,
we present a purely $\infty$-categorical proof of it.

\begin{prop}\label{key-step}
  The functor
  $$\calL_\bbS \circ \bbS[-] \colon \mon \to \Sp$$
  is naturally equivalent to $\Binf \circ (-)\gp$ (see Appendix \ref{prelim}).
  In particular, for any connective spectrum
  $M$, we have a natural equivalence
  $$\calL_\bbS(\bbS[\ominfty_a M]) \simeq M.$$
\end{prop}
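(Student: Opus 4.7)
The plan is to invoke uniqueness of adjoints: I will show that both functors under comparison are left adjoints and that they admit the same right adjoint $\Sp \to \mon$, namely the functor sending a spectrum $M$ to the $\einf$-monoid underlying its additive $\einf$-group $\ominfty_a M$.

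For $\Binf \circ (-)\gp$, this is nearly formal: $(-)\gp \colon \mon \to \grp$ is by definition left adjoint to the forgetful functor, and $\Binf \colon \grp \to \Sp$ is, via the equivalence $\grp \simeq \Spcn$ and the inclusion $\Spcn \hookrightarrow \Sp$, a composite of left adjoints whose right adjoint is $\ominfty_a \colon \Sp \to \grp$. Composing, the right adjoint of $\Binf \circ (-)\gp$ sends a spectrum $M$ to $\ominfty_a M$, viewed in $\mon$ via the forgetful functor $\grp \to \mon$.

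For $\calL_\bbS \circ \bbS[-]$, I would first invoke Remark \ref{natural-augmentation} to factor $\bbS[-]$ through $\calg_{/\bbS}$ as a left adjoint, whose right adjoint $F$, by unwinding the adjunction $\bbS[-] \dashv \ominfty_m$, sends an augmented $\einf$-ring $R \to \bbS$ to the fiber in $\mon$ of $\ominfty_m R \to \ominfty_m \bbS$ over the identity $\pt \to \ominfty_m \bbS$. Since $\calL_\bbS$ is, by construction, left adjoint to the trivial square-zero extension functor $\bbS \oplus - \colon \Sp \to \calg_{/\bbS}$ (augmented via the projection), the right adjoint of the composite is $F \circ (\bbS \oplus -)$. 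The underlying space of $F(\bbS \oplus M)$ is the fiber of the projection $\ominfty(\bbS \oplus M) \simeq \ominfty \bbS \times \ominfty M \to \ominfty \bbS$ over the unit, namely $\ominfty M$; and the inherited multiplicative structure is additive thanks to the vanishing $m \cdot m' \simeq 0$ from Remark \ref{triv-sq-zero-structure}, so $F(\bbS \oplus M) \simeq \ominfty_a M$ in $\mon$, matching the right adjoint computed above.

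Uniqueness of adjoints then yields the desired natural equivalence $\calL_\bbS \circ \bbS[-] \simeq \Binf \circ (-)\gp$. The ``in particular'' statement is immediate upon setting $X = \ominfty_a M$ for $M$ connective, since then $(\ominfty_a M)\gp \simeq \ominfty_a M$ and $\Binf \ominfty_a M \simeq M$ via the equivalence $\grp \simeq \Spcn$. The step requiring the most care is the identification of the monoid structure on the fiber $F(\bbS \oplus M)$ as the additive one on $\ominfty M$; this is morally dictated by the square-zero multiplication table of Remark \ref{triv-sq-zero-structure}, but threading this through the homotopy coherent structure after applying $\ominfty_m$ is the main technical point to verify.
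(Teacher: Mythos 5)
Your formal bookkeeping is correct: both functors are left adjoints, the right adjoint of $\Binf\circ(-)\gp$ is $M\mapsto\ominfty_a M$ viewed in $\mon$, and the right adjoint of $\calL_\bbS\circ\bbS[-]$ is $M\mapsto F(\bbS\oplus M)$, the fiber in $\mon$ of $\ominfty_m(\bbS\oplus M)\to\ominfty_m\bbS$ over the unit. But the step you flag at the end as ``the main technical point to verify'' is not a technicality --- it is the entire content of the proposition, and indeed of Theorem \ref{big-thm} in the case of trivial square-zero extensions of $\bbS$. The fiber $F(\bbS\oplus M)$ is exactly the $\einf$-monoid underlying the object the paper calls $1+M$ (compare Lemma \ref{bunch-of-adjs}), and the assertion that it agrees with $\ominfty_a M$ \emph{as an $\einf$-monoid} is precisely what the paper deduces \emph{from} Proposition \ref{key-step}. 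Remark \ref{triv-sq-zero-structure} only gives the square-zero multiplication table in the homotopy category, which identifies the H-space structure on the fiber (the multiplication $(1+m)(1+m')\simeq 1+m+m'$ up to homotopy) and hence the homotopy groups, but not the $\einf$-structure, and therefore not the underlying connective spectrum. Without an argument lifting this identification to a coherent one, your proof is circular at its key step.

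The paper avoids this issue entirely by a different mechanism: by \cite[4.9]{ggn15}, evaluation at the free $\einf$-space $\free_+(\pt)$ gives an equivalence $\Fun^{\mathrm{L}}(\mon,\Sp)\simeq\Sp$, so two colimit-preserving functors $\mon\to\Sp$ agree as soon as they agree on $\free_+(\pt)$. One then computes $\calL_\bbS(\bbS[\free_+(\pt)])\simeq\calL_\bbS(\Symm\susinftyp(\pt))\simeq\susinftyp(\pt)\simeq\Binf(\free_+(\pt))\gp$ using Propositions \ref{free-sus-commute}, \ref{L-sym-inverses} and \ref{ladjs-factorization}; no comparison of monoid structures on fibers is ever needed. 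If you want to rescue your adjoint-uniqueness route, you would have to prove the equivalence $F(\bbS\oplus M)\simeq\ominfty_a M$ in $\grp$ by independent means --- essentially the ``bottom-up'' argument of Section \ref{subs-bottom-up}, which uses the classical Proposition \ref{classicalresult} together with \cite[2.10]{ggn15} to upgrade an equivalence of underlying spaces to an equivalence of functors into $\Spcn$.
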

In order to prove Proposition \ref{key-step}, we will first introduce the
analogue of the symmetric algebra functor, sending
a spectrum $X$ to the free $\einf$-ring $\Symm X$.
The free-forgetful adjunction $\Sp \rightleftarrows \calg$ thus obtained
is compatible with the free-forgetful adjunction $\spaces \rightleftarrows
\mon$ given by Proposition \ref{ladjs-factorization}, in the sense made
precise by Proposition \ref{free-sus-commute}. Moreover, the free functor
$\Symm$ will naturally
be augmented, and in its augmented fashion, will constitute a right inverse
functor for $\calL_\bbS$.

\begin{prop}\label{free-rings}
  There exists a free functor $$\Symm\colon \Sp \to \calg$$ left adjoint to the
  forgetful functor of Remark \ref{forget-algebras}.
\end{prop}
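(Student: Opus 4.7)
The plan is a direct application of the Adjoint Functor Theorem \cite[5.5.2.9]{HTT}, which is already being used freely in the paper. We have two presentable \cats and a functor between them that preserves limits and is accessible; everything else is citation chasing.

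First, I would verify presentability on both sides. The \cat $\Sp$ is presentable by construction, and its smash product preserves colimits separately in each variable; hence $\calg = \calg(\Sp)$ is presentable by \cite[3.2.3.5]{HA} (commutative algebras in a presentable symmetric monoidal \cat whose tensor product is compatible with colimits form a presentable \cat).

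Next, I would record the relevant preservation properties of the forgetful functor $U\colon\calg\to\Sp$. Small limits in $\calg$ of commutative algebra objects are computed on the underlying spectra, so $U$ preserves small limits (see \cite[3.2.2.4]{HA}). Moreover, by \cite[3.2.3.1]{HA}, $U$ preserves sifted (in particular, filtered) colimits; in particular, $U$ is accessible. Combining these, $U$ is a limit-preserving, accessible functor between presentable \cats, so the Adjoint Functor Theorem yields a left adjoint, which we name $\Symm$.

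There is essentially no obstacle here beyond unwinding the cited results. For concreteness, one can note that the resulting free functor admits the expected explicit description
$$\Symm(X) \simeq \bigoplus_{n\geq 0} \left(X^{\otimes n}\right)_{h\Sigma_n},$$
as in \cite[3.1.3.13]{HA}; this formula will not be needed in what follows, but it matches the notation chosen for the functor and clarifies why the next step (passing to augmented algebras to build a right inverse for $\calL_\bbS$) is reasonable.
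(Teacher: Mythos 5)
Your proof is correct, but it takes a different route from the paper. The paper disposes of this in one line by citing the general existence theorem for free algebras over an $\infty$-operad, \cite[3.1.3.5]{HA}, which constructs the left adjoint directly as an operadic colimit and only needs the symmetric monoidal structure on $\Sp$ to be compatible with countable colimits; as a byproduct that construction comes packaged with the explicit decomposition into symmetric powers, which the paper later invokes (via \cite[3.1.3.9]{HA}) when describing the augmentation ideal of $\Symm X$ in Proposition \ref{free-aug-adj}. You instead argue purely formally via the Adjoint Functor Theorem: presentability of $\calg(\Sp)$ from \cite[3.2.3.5]{HA}, preservation of limits by the forgetful functor from \cite[3.2.2.4]{HA}, and accessibility from preservation of sifted (hence filtered) colimits \cite[3.2.3.1]{HA}. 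All of these hypotheses do hold for $\Sp$, so your argument is complete; what it buys is uniformity with the way the paper already produces right adjoints (e.g.\ Proposition \ref{maximal-subgrp}), and what it gives up is any a priori handle on what $\Symm X$ looks like --- you recover the formula $\bigoplus_{n\geq 0}\bigl(X^{\otimes n}\bigr)_{h\Sigma_n}$ only by appealing back to the very construction the paper uses as its proof. Since the explicit description is genuinely used downstream, the paper's choice is the more economical one here, but yours is a valid alternative.
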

\begin{proof}
  This is an immediate consequence of \cite[3.1.3.5]{HA}.
\end{proof}

\begin{prop}\label{free-sus-commute}
  Given a space $X$, there exists a natural equivalence
  $$\bbS[\free_+ X] \simeq \Symm \susinftyp X$$
  where $\free_+$ is the functor defined in Proposition
  \ref{ladjs-factorization}.
\end{prop}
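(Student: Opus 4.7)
The plan is to identify both sides as composites of left adjoints whose right adjoints agree, and then conclude by uniqueness of adjoints. The functor $\bbS[\free_+-]\colon\spaces\to\calg$ is a composite of two left adjoints: $\free_+$ is left adjoint to the forgetful functor $\mon\to\spaces$ by Proposition \ref{ladjs-factorization}, and $\bbS[-]\dashv\ominfty_m$ by Proposition \ref{umonoid-ringmonoid}. Hence $\bbS[\free_+-]$ is left adjoint to $R\mapsto U\ominfty_m R$, where $U\colon\mon\to\spaces$ denotes the forgetful functor. Dually, by Proposition \ref{free-rings} and the symmetric monoidal adjunction $\susinftyp\dashv\ominfty$, the composite $\Symm\susinftyp$ is left adjoint to $R\mapsto \ominfty VR$, where $V\colon\calg\to\Sp$ is the forgetful functor.

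It therefore suffices to exhibit a natural equivalence $U\ominfty_m\simeq\ominfty V$ of functors $\calg\to\spaces$, since uniqueness of left adjoints will then yield $\bbS[\free_+-]\simeq\Symm\susinftyp$. But both functors in question compute the underlying infinite loop space of an $\einf$-ring: concretely, the construction of $\ominfty_m$ given in Proposition \ref{umonoid-ringmonoid} as postcomposition with the lax symmetric monoidal functor $\Sp\operad\to\spaces^{\times}$ induced by $\ominfty$ identifies, upon evaluation at $\langle 1\rangle\in\einf\operad$, the space $U\ominfty_m R$ with $\ominfty VR$, naturally in $R\in\calg$.

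The only nontrivial step is pinning down this last identification at the level of the $\infty$-operadic coherences implicitly encoded in Proposition \ref{umonoid-ringmonoid}; once one trusts that the symmetric monoidal postcomposition construction behaves well on underlying objects, no further input is needed and the uniqueness-of-adjoints argument produces the desired natural equivalence $\bbS[\free_+ X]\simeq\Symm\susinftyp X$ for $X\in\spaces$.
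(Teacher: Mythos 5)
Your proof is correct and follows essentially the same route as the paper: both identify the two sides as composites of left adjoints in the square of adjunctions relating $\spaces$, $\mon$, $\Sp$ and $\calg$, observe that the corresponding square of right adjoints commutes (the paper simply asserts this is clear, while you spell out why via the postcomposition construction of $\ominfty_m$), and conclude by uniqueness of left adjoints.
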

\begin{proof}
  Let us consider the following diagram
  $$
  \begin{tikzcd}[column sep = huge, row sep = huge]
    \spaces \ar[r, shift left=1.1ex, "\free_+"]
      \ar[d, shift right=1.1ex, "\susinftyp"']
      \ar[d, phantom, "\scalebox{0.7}{$\dashv$}" description]
      \ar[r, phantom, "\scalebox{0.7}{\rotatebox{-90}{$\dashv$}}" description]&
    \mon \ar[l, shift left=1.1ex] \ar[d, shift right=1.1ex, "\text{$\bbS[-]$}"']
      \ar[d, phantom, "\scalebox{0.7}{$\dashv$}" description]\\
  \Sp \ar[r, shift left=1.1ex, "\Symm"]
    \ar[u, shift right=1.1ex, "\ominfty\vphantom{\susinftyp}"']
    \ar[r, phantom, "\scalebox{0.7}{\rotatebox{-90}{$\dashv$}}" description]&
  \calg. \ar[l, shift left=1.1ex] \ar[u, shift right=1.1ex, "\ominfty_m"']
  \end{tikzcd}
  $$
  We can reformulate what we want to prove by saying that the square determined
  by the left adjoints commute. Since the square determined by the right
  adjoints clearly commutes, we are done.
\end{proof}

Similarly to what happens for $\bbS[-]$ (see Remark
\ref{natural-augmentation}), the free-forgetful adjunction $\Sp
\rightleftarrows \calg$ factors through
the \cat $\calg_{/\bbS}$ as a composite adjunction
$$\Sp \rightleftarrows \calg_{/\bbS} \rightleftarrows \calg.$$

\begin{prop}\label{free-aug-adj}\cite[7.3.4.5]{HA}
  There exists an adjunction
  $$\sym\aug^* : \Sp \rightleftarrows \calg_{/\bbS} : \scrI$$
  with the following properties:
  \begin{enumerate}
  \item The functor $\sym\aug^*$ is given by composition
    $$\Sp \simeq \Sp_{/0} \xto{\Symm} \calg_{/\bbS}.$$
  \item The functor $\scrI$ is given by taking fibers of the structure maps
    in $\calg_{/\bbS}$.
  \item The composition
    $$\Sp \xto{\sym\aug^*} \calg_{/\bbS} \xto{\scrI} \Sp$$
    is equivalent to $X \mapsto \prod_{n>0} \sym^n (X)$ (see
    \cite[3.1.3.9]{HA}).
  \end{enumerate}
\end{prop}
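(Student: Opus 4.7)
The plan is to deduce this from the free-forgetful adjunction $\Symm : \Sp \rightleftarrows \calg$ of Proposition \ref{free-rings} by passing to slice categories. For (1), note that $0 \in \Sp$ is a zero object, so $\Sp \simeq \Sp_{/0}$; and since $\Symm$, being a left adjoint, preserves initial objects, we have $\Symm(0) \simeq \bbS$. Consequently, $\Symm$ induces a functor on slice categories
$$\Sp \simeq \Sp_{/0} \xrightarrow{\Symm} \calg_{/\Symm(0)} \simeq \calg_{/\bbS},$$
which I would take as the definition of $\sym\aug^*$.

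For the existence of the right adjoint $\scrI$ and for (2), I would construct $\scrI(R)$ directly as $\mathrm{fib}(R \to \bbS)$ in $\Sp$, and verify the adjunction by the chain of natural equivalences
$$\Map_{\calg_{/\bbS}}(\sym\aug^*(X), R) \simeq \mathrm{fib}\left( \Map_{\calg}(\Symm X, R) \to \Map_{\calg}(\Symm X, \bbS) \right),$$
where the fiber is taken over the map induced by the augmentation $R \to \bbS$ and the target basepoint is the canonical augmentation of $\sym\aug^* X$. Applying the free-forgetful adjunction pointwise identifies this with
$$\mathrm{fib}\left( \Map_{\Sp}(X, R) \to \Map_{\Sp}(X, \bbS) \right) \simeq \Map_{\Sp}(X, \scrI(R)),$$
the fiber being taken over the image of the zero map. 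The key naturality point to check is that, under the free-forgetful adjunction, the map between mapping spaces induced by the augmentation of $\Symm X$ is indeed the one induced by the zero section $X \to 0$; this follows from the fact that the augmentation of $\sym\aug^* X$ is by construction the image under $\Symm$ of $X \to 0$.

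For (3), I would appeal to the decomposition $\Symm X \simeq \bigoplus_{n \geq 0} \sym^n(X)$ from \cite[3.1.3.9]{HA}, noting that, by construction of $\sym\aug^*$, the augmentation $\Symm X \to \bbS$ corresponds to the projection onto the summand $\sym^0(X) \simeq \bbS$. This projection admits the canonical splitting $\bbS \simeq \sym^0(X) \hookrightarrow \Symm X$, and its fiber is therefore identified with the complementary piece $\bigoplus_{n > 0} \sym^n(X)$, which is the content of the formula.

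The main obstacle I anticipate is the careful bookkeeping of slice-category identifications: one must check that the equivalence $\calg_{/\Symm(0)} \simeq \calg_{/\bbS}$ is compatible with the natural augmentation carried by every object in the image of $\sym\aug^*$, and that the fiber construction assembles into a genuine functor $\scrI : \calg_{/\bbS} \to \Sp$ (rather than merely a pointwise recipe) whose adjunction with $\sym\aug^*$ is realized by a single counit/unit pair — the latter being essentially automatic once one formulates the fiber as a pullback in the stable \cat $\Sp$.
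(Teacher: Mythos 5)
The paper offers no proof of this proposition at all: it is quoted from \cite[7.3.4.5]{HA}, and the bracketed citation is the entire justification. Your proposal is therefore a self-contained derivation from Proposition \ref{free-rings}, and it is essentially the argument one would extract from \emph{loc.\ cit.}: define $\sym\aug^*$ by slicing the free functor over $\Symm(0)\simeq\bbS$, compute mapping spaces in the slice $\calg_{/\bbS}$ as fibers of mapping spaces in $\calg$, and transport along the free--forgetful adjunction. The two points that need care are exactly the ones you flag: (i) the basepoint of $\Map_{\calg}(\Symm X,\bbS)$ must be identified with the zero map $X\to\bbS$, which follows from naturality of the unit of $\Symm\dashv\mathrm{forget}$ applied to $X\to 0$; and (ii) the pointwise equivalences must assemble into an equivalence natural in both variables. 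For (ii), an alternative to chasing naturality is to observe that $\scrI$ is the composite of the limit-preserving accessible functors $\calg_{/\bbS}\to\Sp_{/\bbS}$ (forget the algebra structure) and $\mathrm{fib}\colon\Sp_{/\bbS}\to\Sp$ (pull back along $0\to\bbS$), so it admits a left adjoint by the Adjoint Functor Theorem, which your mapping-space computation then identifies with $\sym\aug^*$ by Yoneda. Your treatment of items (1) and (2) is correct.

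One caveat on item (3): your argument produces the \emph{coproduct} $\coprod_{n>0}\sym^n(X)$ --- the augmentation is the projection of $\coprod_{n\geq 0}\sym^n(X)$ onto the summand $\sym^0(X)\simeq\bbS$, and the fiber of a split projection is the complementary summand --- whereas the statement writes the product $\prod_{n>0}\sym^n(X)$. In $\Sp$ an infinite coproduct and an infinite product of the spectra $\sym^n(X)$ genuinely differ (already for $X=\bbS$, where $\pi_0$ gives $\bigoplus_n\bbZ$ versus $\prod_n\bbZ$), so you should not silently declare your answer to be ``the content of the formula.'' The coproduct is the correct formula, and the $\prod$ in the statement is best read as $\coprod$. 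Since the paper only ever uses properties (1) and (2), via Proposition \ref{L-sym-inverses}, this discrepancy has no downstream effect.
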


We will often abuse notation, and denote $\sym\aug^*$ just by $\Symm$.

\begin{prop}\label{L-sym-inverses}
  The composition $\calL_\bbS \Symm$ is naturally equivalent to the identity
  functor.
\end{prop}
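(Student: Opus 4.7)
The plan is to verify the claim by passing to right adjoints, where the identification becomes essentially immediate. Both composition factors come with natural right adjoints: by Proposition \ref{free-aug-adj}, $\Symm = \sym\aug^*\colon \Sp \to \calg_{/\bbS}$ is left adjoint to the fiber-of-structure-map functor $\scrI$, whereas by the very construction recalled in the remark following Proposition \ref{stabilization-of-algebras}, the functor $\calL_\bbS \colon \calg_{/\bbS} \to \Mod_\bbS \simeq \Sp$ is left adjoint to the trivial square-zero extension functor $\bbS \oplus - \colon \Sp \to \calg_{/\bbS}$. Hence the composite $\calL_\bbS\circ\Symm$ has as right adjoint the composite $\scrI\circ(\bbS\oplus -)$, and by essential uniqueness of adjoints it will be enough to produce a natural equivalence between the latter and $\id_\Sp$.

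The desired equivalence on the right-adjoint side is now a direct unwinding of the definitions. For any spectrum $M$, the object $\bbS\oplus M \in \calg_{/\bbS}$ has, as its structure map, the canonical projection $\bbS\oplus M \to \bbS$ onto the first summand (this is the augmentation produced by the construction of $\bbS\oplus -$, matching the description of trivial square-zero extensions recalled in Remark \ref{triv-sq-zero-structure} and the form of the augmentation described in Remark \ref{explicit-augmentation}). Since $\Sp$ is stable, the fiber of this split projection is naturally equivalent to $M$, and this equivalence is functorial in $M$; thus $\scrI\circ(\bbS\oplus -)\simeq\id_\Sp$, and by adjunction $\calL_\bbS\circ\Symm\simeq\id_\Sp$.

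The only mildly non-trivial point is the compatibility of the augmentation produced by $\bbS\oplus -$ with the one used in Proposition \ref{free-aug-adj} to define $\scrI$; once this bookkeeping is done the result is formal. I do not expect any significant obstacle beyond this identification.
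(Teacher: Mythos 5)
Your argument is correct and is essentially the paper's own proof in different packaging: the paper writes out the chain of equivalences $\Map_\Sp(\calL_\bbS\Symm X,Y)\simeq\Map_{\calg_{/\bbS}}(\Symm X,\bbS\oplus Y)\simeq\Map_\Sp(X,\scrI(\bbS\oplus Y))\simeq\Map_\Sp(X,Y)$ and concludes by Yoneda, which is precisely your observation that the right adjoint $\scrI\circ(\bbS\oplus -)$ is equivalent to the identity together with uniqueness of adjoints. The key input in both versions is the same identification $\scrI(\bbS\oplus Y)\simeq Y$, i.e.\ that the fiber of the augmentation of a trivial square-zero extension of $\bbS$ is the module itself.
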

\begin{proof}
  Let $X$ and $Y$ be spectra. We have a chain of equivalences
  \begin{displaymath}
  \begin{split}
  \Map_\Sp(\calL_\bbS \Symm X, Y)
    &\simeq \Map_{\calg_{/\bbS}}(\Symm X, \bbS\oplus Y)\\
    &\simeq \Map_\Sp(X, \scrI(\bbS\oplus Y))\\
    &\simeq \Map_\Sp(X,Y)
  \end{split}
  \end{displaymath}
  from which we deduce that $\calL_\bbS \Symm$ and $\id_\Sp$ represent the
  same functor in the \cat $\Sp$, and are therefore naturally
  equivalent.
\end{proof}

We are now ready to prove Proposition \ref{key-step}.

\begin{proof}[Proof of Proposition \ref{key-step}]
  By \cite[4.9]{ggn15}, we have a natural equivalence
  $$\Fun^{\mathrm{L}}(\mon,\Sp) \stackrel{\sim}{\longrightarrow} \Sp$$
  given by evaluation at $\free_+ (\pt)$, the free $\einf$-space generated by
  one point.
  By what we have seen so far, we have that
  \begin{displaymath}
  \begin{split}
    \calL_\bbS (\bbS[\free_+ (\pt)])
      &\stackrel{\vphantom{(}^{(\ref{free-sus-commute})}}{\simeq}
        \calL_\bbS (\Symm \susinftyp (\pt))\\
    &\stackrel{\vphantom{(}^{(\ref{L-sym-inverses})}}{\simeq} \susinftyp(\pt)\\
    &\stackrel{\vphantom{(}^{(\ref{ladjs-factorization})}}{\simeq}
      \Binf \left(\free_+ (\pt)\right)\gp
  \end{split}
  \end{displaymath}
  hence $\calL_\bbS \circ \bbS[-]$ and $\Binf \circ (-)\gp$ are naturally
  equivalent.
  The second part of the statement just follows from
  the equivalence between $\einf$-groups and connective
  spectra (see Remark \ref{infinite-loops}) and the
  fact that $(-)\gp$ is the identity on $\einf$-groups, as it is left adjoint to
  the inclusion $\grp \to \mon$.
\end{proof}

\subsection{Groups of units of square-zero extensions, the setup}
\label{subs-setup}
Our goal in this and the following two sections is to give two proofs of
the following generalization of
Proposition \ref{classicalresult} in our homotopy coherent setting.
\begin{thm}\label{big-thm}
  Let $R$ be a connective $\einf$-ring, and let $\wt R \to R$ be a square-zero
  extension by a connective $R$-module $M$. By applying $\gl$ to
  $\wt R \to R$, we obtain a map of spectra
  $$\phi\colon \gl\wt R \to \gl R.$$
  The fiber of $\phi$ is naturally equivalent to $M$ in the \cat $\Sp$.
\end{thm}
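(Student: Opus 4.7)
The plan is to apply the functor $\gl$ to the Cartesian square defining $\wt R$ as a square-zero extension, which presents $\wt R$ as a pullback $R \times_{R \oplus M[1]} R$ of a derivation $d_\eta$ and the zero derivation $d_0$. Since $\gl = \Binf \circ (-)^\times \circ \ominfty_m$ is a composite of right adjoints, it preserves pullbacks; the induced square in $\Sp$ is Cartesian, so the fiber of $\phi = \gl(\wt R \to R)$ coincides with the fiber of the parallel arrow $\gl d_0 \colon \gl R \to \gl(R \oplus M[1])$.

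The structure map $p \colon R \oplus M[1] \to R$ of the trivial square-zero extension retracts $d_0$ in $\calg$, so $\gl p$ retracts $\gl d_0$ in the stable \cat $\Sp$. This forces a canonical splitting $\gl(R \oplus M[1]) \simeq \gl R \oplus F$, where $F = \mathrm{fib}(\gl p) = \mathrm{cofib}(\gl d_0)$, and hence $\mathrm{fib}(\gl d_0) \simeq F[-1]$. The whole theorem thus reduces to exhibiting a natural equivalence $F \simeq M[1]$, for then $\mathrm{fib}(\phi) \simeq M[1][-1] \simeq M$.

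For the remaining identification I see two complementary routes. The bottom-up one first checks that $F$ has the right homotopy groups: by Proposition \ref{pigl}, $\pi_k \gl A \cong \pi_k A$ for $k \geq 1$ and any $\einf$-ring $A$, so the splitting above gives $\pi_k F \cong \pi_{k-1} M$ for $k \geq 1$; the connectivity of $M$ then makes $\pi_0 F$ and $\pi_{-1} F$ vanish (alternatively, if one prefers to compute the homotopy groups of $\mathrm{fib}(\phi)$ directly, the $k = 0$ end is exactly the content of Proposition \ref{classicalresult} applied to the discrete square-zero extension $\pi_0 \wt R \to \pi_0 R$). One then has to produce a coherent map $M[1] \to F$, informally given by $m \mapsto 1 + m$, which is a unit of $R \oplus M[1]$ with inverse $1 - m$ thanks to the multiplication law described in Remark \ref{triv-sq-zero-structure}. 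The top-down route would instead use Proposition \ref{key-step} to write $\gl R \simeq \calL_\bbS \bbS[\GL R]$ (the group completion being trivial since $\GL R$ is already grouplike) and the adjunction $\calL_\bbS \dashv (\bbS \oplus -)$ to read off the splitting directly on mapping spectra.

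The main obstacle, in either route, is promoting the informal ``$m \mapsto 1 + m$'' description of the map $M[1] \to F$ to a genuine morphism of spectra: the cotangent-complex approach sidesteps this via adjunction, while the hands-on approach requires a careful $\einf$-monoid construction and use of the equivalence $\grp \simeq \Spcn$ to repackage the resulting map of connective $\einf$-groups as a map of spectra.
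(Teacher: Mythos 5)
Your opening reduction is sound and is essentially the paper's Proposition \ref{one-plus-m}: applying $\gl$ to the defining Cartesian square, using that the section $d_0$ splits $\gl(R\oplus M[1])\simeq \gl R\oplus F$ with $F=\mathrm{fib}(\gl(R\oplus M[1])\to\gl R)$, and concluding $\mathrm{fib}(\phi)\simeq F[-1]$. But everything after that is a statement of intent rather than a proof, and the step you yourself flag as ``the main obstacle'' --- producing the natural equivalence $F\simeq M[1]$, i.e.\ promoting $m\mapsto 1+m$ to a morphism of spectra --- is precisely the mathematical content of the theorem. Computing $\pi_*F$ via Proposition \ref{pigl} and Proposition \ref{classicalresult} only shows $F$ and $M[1]$ have abstractly isomorphic homotopy groups, which does not yield an equivalence, let alone a natural one. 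So as written the proposal has a genuine gap at its core.

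You are also missing the intermediate reduction that makes both of the paper's arguments work: base-changing along the unit $\bbS\to R$ (Remark \ref{scalars-restriction-pullback} and Lemma \ref{reduction}) identifies $\mathrm{fib}(\gl(R\oplus M)\to\gl R)$ with $\mathrm{fib}(\gl(\bbS\oplus M)\to\gl\bbS)$, so one only needs the case $R=\bbS$. There the functor $M\mapsto 1+M$ is exhibited as right adjoint to $X\mapsto\calL_\bbS(\bbS[\ominfty_a X])$ (Lemma \ref{bunch-of-adjs}), and Proposition \ref{key-step} identifies that left adjoint with the identity on connective spectra --- this is the precise form of the ``read off via the adjunction $\calL_\bbS\dashv(\bbS\oplus-)$'' idea you gesture at, but it requires the slice category $\calg_{/\bbS}$ and the augmentation, not just $\gl R\simeq\calL_\bbS\bbS[\GL R]$. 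For the bottom-up route, the paper never constructs the coherent map $M\to 1+M$ by hand: it builds an equivalence of \emph{spaces} $\ominfty(1+M)\to\ominfty M$ using the basepoint shift $\bpshift$, and then invokes the fact (from \cite[2.10]{ggn15}, via $\grp\simeq\Sp^{\mathrm{cn}}$) that a limit-preserving functor $\Sp^{\mathrm{cn}}\to\Sp^{\mathrm{cn}}$ is determined by its postcomposition with $\ominfty$; since $1+(-)$ is a right adjoint, this forces $1+(-)\simeq\id$. Adopting either of these devices would close your gap.
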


\begin{rem}\label{connectivity-hp}
  As $\gl$ (introduced in Notation \ref{notat-gl}) lands, by
  definition, in the image of $\Binf$ (defined in Proposition
  \ref{ladjs-factorization}), upon its application, all nonconnective
  information is lost (see Remark \ref{infinite-loops}).
  Hence, we can restrict ourselves to work with connective $\einf$-rings
  and connective modules without loss of generality, and suitably replace the
  relevant objects with their connective covers when dealing with nonconnective
  ones.
\end{rem}

Our strategy for the proof will be the following:
in this section, we will first show how to reduce the problem
from general square-zero extensions
to trivial ones, and then how to reduce it further to
trivial square-zero extensions of the sphere spectrum $\bbS$.
In the following two sections, we will show that the
theorem indeed holds in this last case. In Section \ref{subs-top-down}, we
will do the
last step in an exquisitely ``higher algebraic'' fashion, by giving a proof
entirely at the level of spectra, and in particular recovering the ordinary
result as a particular case. In Section \ref{subs-bottom-up}, we will give an
alternative
proof substantially founded on the space level, extending the homotopy
coherent result from the ordinary one.

\begin{prop}\label{one-plus-m}
  Let $R$ be a connective $\einf$-ring, and let $M$ be any connective
  $R$-module. Let
  $$\gl(R \oplus M) \to \gl R$$
  be the map obtained by applying the functor $\gl$ to the trivial square-zero
  extension $R \oplus M \to R$ and let $1+M$ denote the fiber of
  $\gl(R \oplus M) \to \gl R$. Then given any square-zero extension $\wt R \to
  R$ of $R$ by $M$, the fiber of the induced map
  $$\gl\wt R \to \gl R$$
  is naturally equivalent to $1+M$.
\end{prop}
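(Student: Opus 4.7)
The plan is to exploit that $\gl$ is a composition of right adjoints and
therefore preserves limits; this lets us translate the Cartesian square
defining $\wt R$ into a pullback in $\Sp$ and reduce the computation of
the fiber of $\gl\phi$ to that of $\gl d_0$, which does not depend on
$\wt R$ at all.

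First, since $\wt R\to R$ is a square-zero extension of $R$ by $M$, the
definition recalled in Section \ref{subs-sq-zero} provides a derivation
$\eta\colon L_R\to M[1]$ and a Cartesian square
\begin{displaymath}
\begin{tikzcd}
\wt R \ar[r, "\phi"] \ar[d] & R \ar[d, "d_\eta"] \\
R \ar[r, "d_0"] & R \oplus M[1]
\end{tikzcd}
\end{displaymath}
in $\calg$. Applying $\gl$ yields a Cartesian square in $\Sp$, and by
pasting of pullbacks the fiber of $\gl\phi$ is canonically equivalent to
the fiber of $\gl d_0\colon\gl R\to\gl(R\oplus M[1])$. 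In particular this
fiber depends only on $d_0$, and hence is the same spectrum for every
square-zero extension of $R$ by $M$.

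Next, I would identify this fiber with $1+M$. Since $d_0$ is a section of
the structure map $p'\colon R\oplus M[1]\to R$, applying $\gl$ produces a
split monomorphism with retraction in $\Sp$, and hence a splitting
$\gl(R\oplus M[1])\simeq \gl R\oplus (1+M[1])$, where $1+M[1]$ denotes the
fiber of $\gl p'$ (the notation of the proposition applied to the module
$M[1]$); under this splitting $\gl d_0$ becomes the summand inclusion. In
a stable $\infty$-category the fiber of any morphism is $\Omega$ of its
cofiber, so the fiber of $\gl d_0$ is $\Omega(1+M[1])$. To identify this
with $1+M$, I would invoke that the assignment $N\mapsto 1+N$ from
connective $R$-modules to spectra is a composition of limit-preserving
functors---$N\mapsto R\oplus N$ is right adjoint to $\calL_R$, $\gl$ is a
right adjoint, and the fiber-over-$\gl R$ functor is a limit---so it
commutes with $\Omega$, yielding
$\Omega(1+M[1])\simeq 1+\Omega M[1]\simeq 1+M$.

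The main obstacle I expect is this last identification, and more
specifically ensuring that the resulting equivalence depends naturally on
$\wt R$ as required. A route that sidesteps the $\Omega$-commutation
argument is to observe that the trivial square-zero extension
$R\oplus M\to R$ itself arises from the Cartesian square above for the
zero derivation $\eta=0\colon L_R\to M[1]$: under the equivalence
$\Sp(\calg_{/R})\simeq\Mod_R$ of Proposition
\ref{stabilization-of-algebras}, the pullback $0\times_{M[1]}0$ in
$\Mod_R$ computes to $M$. Applying the preceding fiber identification to
this particular case recovers the fiber of $\gl(R\oplus M)\to\gl R$, which
is $1+M$ by definition, so the general fiber of $\gl\phi$ is $1+M$ with no
shifting required. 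Either route completes the proof.
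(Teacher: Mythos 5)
Your proof is correct, and its main line is essentially the paper's own: apply $\gl$ to the Cartesian square defining the square-zero extension, use that the resulting square in $\Sp$ is again (co)Cartesian, and exploit the splitting $\gl(R\oplus M[1])\simeq\gl R\oplus(1+M[1])$ induced by the section $d_0$ to identify the fiber via pasting. The differences are worth recording. The paper runs the pasting on the pushout side and concludes $\mathrm{cofib}(\gl\wt R\to\gl R)\simeq 1+M[1]$, then invokes the identification $(1+M)[1]\simeq 1+(M[1])$, which it asserts in one line; you run the dual pullback pasting, land on $\Omega(1+M[1])$, and justify the deshifting by observing that $N\mapsto 1+N$ is a composite of limit-preserving functors and hence commutes with $\Omega$ --- this is a more explicit argument for the step the paper leaves terse (and is essentially the mechanism behind Lemma \ref{bunch-of-adjs}, transported from $\bbS$ to $R$). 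Your second route is a genuine variant not in the paper and arguably the cleanest: having reduced $\mathrm{fib}(\gl\phi)$ to $\mathrm{fib}(\gl d_0)$, which is independent of $\eta$, you feed in the trivial extension itself (classified by $\eta=0$, since $R\oplus-$ preserves the pullback $0\times_{M[1]}0\simeq M$) and read off $1+M$ by definition, bypassing the shift lemma entirely; the only extra input is the standard fact that $R\oplus M\to R$ is the square-zero extension attached to the zero derivation. One caveat applying equally to your write-up and to the paper: $\gl=\Binf\circ\GL$ is not literally a right adjoint into $\Sp$ (only $\GL$ is; $\Binf$ followed by the inclusion $\Sp^{cn}\hookrightarrow\Sp$ is a left adjoint), so preservation of the relevant pullback really rests on the observation that the pullback computed in $\Sp$ is already connective, which holds here because $\gl(d_0)$ is surjective on $\pi_0$.
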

\begin{proof}
  First of all, we observe that $\gl(R \oplus M) \simeq \gl R \oplus (1+M)$.
  In fact, since the map of $\einf$-rings $R\oplus M \to R$ admits a section,
  the same is true for its
  image under the right adjoint functor $\gl$, and hence the co/fiber sequence
  $1+M \to \gl(R\oplus M) \to \gl R$ splits.
  In particular, $1+M \simeq \coker \left(\gl R \to \gl (R \oplus M)\right)$,
  so that
  $$(1+M)[1] \simeq 1+(M[1]).$$
  By virtue of this canonical identification, we will unambiguously just write
  $$1+M[1].$$

  Let us now suppose that $\wt R \to R$ is a square-zero extension of
  $R$ by $M$. By definition, $\wt R$
  sits in a Cartesian square
  $$
  \begin{tikzcd}
    \wt R \ar[d] \ar[r] & R \ar[d] \\
    R \ar[r] & R \oplus M[1]
  \end{tikzcd}
  $$
  in the \cat $\calg$. Upon applying the functor $\gl$, we obtain
  the co/Cartesian square
  $$
  \begin{tikzcd}
    \gl\wt R \ar[d] \ar[r] & \gl R \ar[d] \\
    \gl R \ar[r] & \gl R \oplus (1+M[1])
  \end{tikzcd}
  $$
  in the \cat $\Sp$.

  The result now follows from the pasting law of pushouts
  applied to the following diagram
  $$
  \begin{tikzcd}
    \gl\wt R \ar[d] \ar[r] & \gl R \ar[d] \ar[r] & 0 \ar[d] \\
    \gl R \ar[r] & \gl R \oplus (1+M[1]) \ar[r] & 1+M[1].
  \end{tikzcd}
  $$
\end{proof}

Hence, we are reduced to prove our result in the case of trivial square-zero
extensions. Next step will be to reduce to square-zero extensions of $\bbS$.

\begin{lemma}\label{reduction}
Let $R$ be an $\einf$-ring, and let $R \oplus M$ be a trivial square-zero
extension by an $R$-module $M$. Then, if Theorem \ref{big-thm} holds for
trivial square-zero extensions of $\bbS$, it holds for $R\oplus M \to R$.
\end{lemma}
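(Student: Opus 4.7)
The plan is to exploit the unit map $f\colon \bbS \to R$ together with Remark \ref{scalars-restriction-pullback} to write $R\oplus M$ as a pullback of $\einf$-rings involving $\bbS$, and then use that $\gl$ preserves pullbacks.

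First, I would apply Remark \ref{scalars-restriction-pullback} to the unit map $f\colon\bbS\to R$ and the $R$-module $M$. This yields a Cartesian square
$$
\begin{tikzcd}
\bbS \oplus f^! M \ar[r] \ar[d] & R \oplus M \ar[d] \\
\bbS \ar[r, "f"] & R
\end{tikzcd}
$$
in the \cat $\calg$, where $f^!M$ denotes $M$ regarded as an $\bbS$-module by restriction of scalars along $f$. A key (trivial) point is that the underlying spectrum of $f^!M$ coincides with that of $M$.

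Next, I would apply the functor $\gl$. By Notation \ref{notat-gl}, $\gl\colon\calg\to\Sp$ is a right adjoint (to $\bbS[-]$), so it preserves all limits; in particular it carries the above square to a Cartesian square
$$
\begin{tikzcd}
\gl(\bbS \oplus f^! M) \ar[r] \ar[d, "\psi"'] & \gl(R \oplus M) \ar[d, "\phi"] \\
\gl \bbS \ar[r] & \gl R
\end{tikzcd}
$$
in the stable \cat $\Sp$. Since fibers of parallel vertical maps in a Cartesian square in a stable \cat are canonically equivalent, we obtain $\mathrm{fib}(\phi)\simeq\mathrm{fib}(\psi)$.

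By the standing hypothesis, Theorem \ref{big-thm} holds for the trivial square-zero extension $\bbS \oplus f^! M \to \bbS$, so $\mathrm{fib}(\psi)\simeq f^! M$ in $\Sp$. Combining this with $f^!M\simeq M$ in $\Sp$ gives $\mathrm{fib}(\phi)\simeq M$, as required. There is no substantial obstacle: the entire content is packaged by Remark \ref{scalars-restriction-pullback} and the fact that $\gl$, having a left adjoint, preserves limits.
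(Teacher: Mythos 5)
Your proof is correct and follows essentially the same route as the paper's: pull back the trivial square-zero extension along the unit map $\bbS \to R$ via Remark \ref{scalars-restriction-pullback}, apply the right adjoint $\gl$ to get a Cartesian square in $\Sp$, and identify the fibers of the two vertical maps (the paper phrases this last step via the pasting law for pullbacks, which is the same argument). Your explicit remark that $f^!M$ has the same underlying spectrum as $M$ is a small point the paper elides by writing $\bbS\oplus M$ directly.
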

\begin{proof}
Let $R$ be an $\einf$-ring, and let $\bbS \to R$ be the unit morphism. Given
any $R$-module $M$, this morphism induces by Remark
\ref{scalars-restriction-pullback} a Cartesian square
\begin{equation}\label{general-split}
\begin{tikzcd}
  \bbS \oplus M \ar[r] \ar[d] & R \oplus M \ar[d] \\
  \bbS \ar[r] & R
\end{tikzcd}
\end{equation}
in the \cat $\calg$.
The functor $\gl$, being right adjoint, sends the square
(\ref{general-split}) to a co/Cartesian square in $\Sp$:
\begin{equation}\label{particular-split}
\begin{tikzcd}
\gl \left( \bbS \oplus M \right) \ar[r] \ar[d] & \gl \left( R \oplus M \right)
  \ar[d] \\
\gl \bbS \ar[r] & \gl R.
\end{tikzcd}
\end{equation}
Let us denote by $P$ the fiber of $\gl(R\oplus M) \to \gl R$. Then, $P$ fits
into the diagram:
$$
\begin{tikzcd}
P \ar[r] \ar[d] & \gl (\bbS \oplus M) \ar[r] \ar[d] & \gl (R \oplus M) \ar[d]\\
0 \ar[r] & \gl \bbS \ar[r] & \gl R
\end{tikzcd}
$$
where the map $P \to \gl(\bbS\oplus M)$ is induced by the fact that
(\ref{particular-split}) is Cartesian.
As the outer square is Cartesian by construction, it follows from the pasting
law that $P$ is canonically equivalent to the fiber of $ \gl(\bbS\oplus M) \to
\gl\bbS$, which by hypothesis is equivalent to $M$.
\end{proof}

\begin{lemma}\label{bunch-of-adjs}
  Given any spectrum $M$, let $1+M$ denote the fiber of the map
  $$\gl(\bbS\oplus M) \to \gl\bbS$$
  obtained by applying $\gl$ to the square-zero extension $\bbS\oplus M \to
  \bbS$.
  Then, the functor $$M \mapsto 1+M$$
  admits a left adjoint.
\end{lemma}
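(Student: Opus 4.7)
The plan is to realize $F \colon M \mapsto 1+M$ as a composition of right adjoints between presentable \cats; the existence of a left adjoint will then be a formal consequence. Concretely, I would factor $F$ as
$$\Sp \xrightarrow{\bbS \oplus -} \calg_{/\bbS} \xrightarrow{\gl_{/\bbS}} \Sp_{/\gl \bbS} \xrightarrow{\mathrm{fib}} \Sp,$$
where the first functor regards the trivial square-zero extension $\bbS \oplus M$ as an augmented $\einf$-ring via the canonical projection to $\bbS$ (cf.\ Remark \ref{natural-augmentation}), the second functor applies $\gl$ to both the total algebra and the structure arrow, and the third functor takes the fiber at the zero map $0 \to \gl \bbS$. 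By direct inspection, this composite sends $M$ to $1+M$.

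Each of these three functors is a right adjoint. The leftmost functor $\bbS \oplus -$ is the right adjoint to $\calL_\bbS$ by the very construction of the cotangent complex recalled earlier in this section. The fiber functor $\mathrm{fib}$ is right adjoint to the functor $H \mapsto \left( H \xrightarrow{0} \gl \bbS \right)$, via the standard adjunction between a pointed presentable stable \cat and its slice at the zero object. Finally, $\gl_{/\bbS}$ is obtained on slices from the right adjoint $\gl \colon \calg \to \Sp$, by appealing to the formal fact that any adjunction $L \dashv R \colon \calD \rightleftarrows \calC$ induces, for every $X \in \calC$, an adjunction on slice \cats $\calD_{/R(X)} \rightleftarrows \calC_{/X}$ whose right adjoint is induced by $R$ and whose left adjoint is induced by $L$ together with the counit at $X$.

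Composing the three right adjoints produces the functor $F$, which therefore admits a left adjoint; concretely, this left adjoint is given by composing the three individual left adjoints in reverse order. The only point requiring a moment's care is the slice-level adjunction for $\gl$, which is entirely formal once the underlying adjunction $\bbS[-] \dashv \gl$ is in place; no delicate computation is needed, so I do not anticipate any real obstacle.
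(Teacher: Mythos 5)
Your argument is correct, and it follows the same basic strategy as the paper's proof: exhibit $M \mapsto 1+M$ as a composite of right adjoints between presentable \cats whose first factor is $\bbS\oplus -\colon \Sp \to \calg_{/\bbS}$. The difference lies in how the remaining right adjoint $\calg_{/\bbS}\to\Sp$ is packaged. The paper treats it as a single adjunction: unwinding mapping spaces via the explicit description of the augmentation in Remark \ref{explicit-augmentation}, it identifies $(A\to\bbS)\mapsto \mathrm{fib}(\gl A\to\gl\bbS)$ with the right adjoint $\Binf\scrG$ of the augmented functor $\bbS[\ominfty_a -]\colon\Sp\to\calg_{/\bbS}$ of Remark \ref{natural-augmentation}. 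You instead split this same functor into two formal pieces, the sliced functor $\gl_{/\bbS}\colon\calg_{/\bbS}\to\Sp_{/\gl\bbS}$ and the fiber of the structure map (right adjoint to $H\mapsto(H\xto{0}\gl\bbS)$); both auxiliary adjunctions are standard, and your identification of the composite with $1+-$ is correct, so the lemma follows. What the paper's packaging buys, and what you should make explicit if you keep yours, is the identification of the resulting left adjoint: the top-down proof of Theorem \ref{big-thm} needs it to be $X\mapsto\calL_\bbS(\bbS[\ominfty_a X])$, with $\bbS[\ominfty_a X]$ carrying its \emph{canonical} augmentation, so that Proposition \ref{key-step} applies. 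Unwinding your composite of left adjoints, the zero map $H\xto{0}\gl\bbS$ corresponds to the constant map $\ominfty_a H\to\GL\bbS$ at the unit, so the induced augmentation factors as $\bbS[\ominfty_a H]\to\bbS[0]\simeq\bbS$ and is indeed the canonical one of Remark \ref{natural-augmentation} --- a short verification your write-up currently elides. (A minor terminological slip: the fiber adjunction is between $\Sp$ and the slice over $\gl\bbS$, with left adjoint the zero section, not ``the slice at the zero object''.)
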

\begin{proof}
  For any spectrum $X$, giving a map $X \to 1+M$ is
  equivalent to give a map $X \to \gl(\bbS\oplus M)$
  together with a nullhomotopy for its postcomposition
  with the map $\gl(\bbS\oplus M) \to \gl\bbS$.
  Upon passing to adjoints we see that, by virtue of the explicit
  description of the augmentation map of $\bbS[-]$ given in Remark
  \ref{explicit-augmentation}, it is equivalent to give a map
  $$
  \begin{tikzcd}
    \bbS[\ominfty_a X] \ar[rr] \ar[dr, bend right] & &
      \bbS \oplus M \ar[dl, bend left] \\
    & \bbS &
  \end{tikzcd}
  $$
  that is, a morphism in $\Map_{\calg_{/\bbS}}(\bbS[\ominfty_a X],
  \bbS\oplus M)$. We can rephrase what we just observed more precisely,
  by saying that the functor $M \mapsto 1+M$ fits in the following
  diagram
  \begin{equation}\label{diagrammone}
  \begin{tikzcd}[column sep = huge, row sep = huge]
  \calg_{/\bbS}
  \ar[r, shift left=1.1ex, "\calL_\bbS"]
  \ar[r, phantom, "\scalebox{0.7}{\rotatebox{-90}{$\dashv$}}" description]
  \ar[d, phantom, "\scalebox{0.7}{$\dashv$}" description]
  \ar[d, shift left=1.1ex, "\Binf\scrG\vphantom{\bbS[\ominfty_a -]}"]&
  \Sp
  \ar[l, shift left=1.1ex, "\bbS\oplus -"]
  \ar[dl, bend left, phantom, "\scalebox{0.7}{\rotatebox{130}{$\dashv$}}" description]
  \ar[dl, bend left, shift right=1.1ex, "1+ "' pos=0.4]\\
  \Sp
  \ar[u, shift left=1.1ex, "{\bbS[\ominfty_a -]}"]
  \ar[ur, shift right=1.1ex, bend right]&
  \end{tikzcd}
  \end{equation}
  i.e. if we denote by $\scrG$ the right adjoint to the factorization of
  $\bbS[\ominfty_a -]$ through $\calg_{/\bbS}$ given by Remark
  \ref{natural-augmentation}, then $M\mapsto 1+M$ is given by the composite
  right adjoint $\Binf\scrG(\bbS\oplus -)$.
  Hence, the functor $M \mapsto 1+M$ is right adjoint to the functor
  $X \mapsto \calL_\bbS ( \bbS[\ominfty_a X])$.
\end{proof}

\begin{rem}
  We stress that in (\ref{diagrammone})
  both $\bbS[\ominfty_a -]$ and $\Binf\scrG$ are functors from the
  \emph{slice \cat} $\calg_{/\bbS}$; in particular, it is important to
  make a distinction between $\Binf\scrG$ and $\gl$.
\end{rem}

\subsection{Groups of units of square-zero extensions, top-down}
\label{subs-top-down}

We are now ready to give a first proof.

\begin{proof}[Proof of Theorem \ref{big-thm}]
  By virtue of Proposition \ref{one-plus-m} and Lemma \ref{reduction}, all it
  is left to do, it is to prove the theorem for trivial square-zero extensions
  $$\bbS \oplus M \to \bbS$$
  where $M$ is a connective spectrum.
  As we want to show that $1+M$ is naturally equivalent to $M$, it is
  enough to prove that the functor $X\mapsto\calL_\bbS (\bbS[\ominfty_a
  X])$, left adjoint to $M \mapsto 1+M$, is naturally equivalent to the
  identity functor.
  But, by virtue of Proposition \ref{key-step}, we know that this is indeed
  the case.
\end{proof}

\subsection{Groups of units of square-zero extensions, bottom-up}
\label{subs-bottom-up}

It is possible to give an alternative proof of (the last step of
the proof of) Theorem \ref{big-thm},
based on space level arguments
and Proposition \ref{classicalresult}.
We dedicate this section to give such an alternative proof. The idea is to
show the equivalence of the spaces $\ominfty M$ and $\ominfty (1+M)$, and
then to show there are no obstructions to lift the comparison map
at the level of connective spectra.
We begin with the following observation.

\begin{prop}
  Let $R$ be a connective $\einf$-ring, and let $\wt R \to R$ be a
  square-zero extension of $R$ by a connective $R$-module $M$. Let us denote by
  $1+M$ the fiber of the induced map $\gl\wt R \to \gl R$. Then, the
  spaces $\ominfty (1+M)$ and $\ominfty M$ are equivalent in the \cat
  $\spaces$ of spaces.
\end{prop}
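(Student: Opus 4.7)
The plan is to reduce to the trivial square-zero extension and then compute the relevant fiber at the level of $\einf$-spaces via a translation argument.

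First, by Proposition \ref{one-plus-m}, the spectrum $1+M$ is naturally equivalent whether defined from the given square-zero extension $\wt R \to R$ or from the trivial one $R \oplus M \to R$, so I would assume $\wt R = R \oplus M$ without loss of generality. Then, since $\gl = \Binf \GL$ factors through the image of $\Binf$ and $\ominfty\Binf \simeq \id$ on connective spectra (Remark \ref{infinite-loops}), there is a natural equivalence $\ominfty \gl R \simeq \GL R$. As $\ominfty$ preserves limits, $\ominfty(1+M)$ is the fiber in $\spaces$, taken over $1$, of the map $\GL(R \oplus M) \to \GL R$.

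Next, I would observe that the square
$$
\begin{tikzcd}
\GL(R \oplus M) \ar[r] \ar[d] & \ominfty(R \oplus M) \ar[d] \\
\GL R \ar[r] & \ominfty R
\end{tikzcd}
$$
is Cartesian in $\spaces$. By Remark \ref{GL-to-mult} this reduces to the classical assertion that $(\pi_0 R \oplus \pi_0 M)^\times$ is the preimage of $(\pi_0 R)^\times$ under the projection, which holds because $\pi_0 M$ is a square-zero ideal in $\pi_0 R \oplus \pi_0 M$. Taking fibers of the two vertical maps over $1$ then identifies $\ominfty(1+M)$ with the fiber, over $1 \in \ominfty R$, of the map of infinite loop spaces $\ominfty(R \oplus M) \to \ominfty R$.

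Finally, I would pick any point $u \in \ominfty(R \oplus M)$ mapping to $1 \in \ominfty R$ (such $u$ exists by surjectivity on $\pi_0$) and use the $\einf$-group structure: simultaneous translation by $u$ in the source and by $1$ in the target yields a commuting square of self-equivalences identifying the fiber over $0$ with the fiber over $1$, as unpointed spaces. The fiber over $0$ is $\ominfty M$ since $M$ is the spectrum-level fiber of $R \oplus M \to R$ and $\ominfty$ preserves limits, giving the desired equivalence $\ominfty(1+M) \simeq \ominfty M$ in $\spaces$. The main obstacle is the Cartesian-square claim in the middle step, which requires the connectivity of $M$ in order to import the classical fact that a square-zero ideal is contained in the Jacobson radical.
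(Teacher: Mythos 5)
Your proof is correct, but it follows a genuinely different route from the one in the paper. The paper does not reduce to the trivial extension: it constructs an explicit comparison map $\psi\colon\ominfty(1+M)\to\ominfty M$ by combining the fiber sequence $\ominfty(1+M)\to\GL\wt R\to\GL R$ with the basepoint shifts $\bpshift(\iota_{\wt R})$ and $\bpshift(\iota_R)$ of Definition \ref{def-bpshift}, and then verifies that $\psi$ is an equivalence on homotopy groups: Proposition \ref{pigl} identifies $\pi_k\GL(-)$ with $\pi_k$ for $k\geq 1$, the five lemma handles $\pi_k(1+M)$ in that range, and a separate argument (vanishing of the relevant boundary maps plus Proposition \ref{classicalresult}) settles $\pi_0$. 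You instead reduce to $R\oplus M$ via Proposition \ref{one-plus-m}, show that the naturality square comparing $\GL$ with $\ominfty$ along $R\oplus M\to R$ is Cartesian --- which, by Remark \ref{GL-to-mult} and pullback pasting, amounts precisely to the classical fact that the units of $\pi_0 R\oplus\pi_0 M$ are the full preimage of the units of $\pi_0 R$ --- and then move the fiber from $1$ to $0$ by translating along the additive $\einf$-group structure of the infinite loop spaces. Your route avoids all homotopy-group bookkeeping and isolates the single genuinely classical input (units are detected modulo a square-zero ideal, exactly the ``reflects units'' step in the proof of Proposition \ref{classicalresult}) in one clean pullback statement; what it gives up is the explicit map $\psi$, which in the paper's version records the identification $u\mapsto u-1$ on $\pi_0$, though your translation by $u=(1,0)$ encodes the same shift and is, if anything, more visibly natural in $M$. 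Neither argument is circular: Proposition \ref{one-plus-m}, Remark \ref{GL-to-mult} and Proposition \ref{classicalresult} are all established independently of Theorem \ref{big-thm}.
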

\begin{proof}
  By applying $\ominfty$ to the co/fiber sequence
  $$1+M \to \gl\wt R \to \gl R$$
  we get a fiber sequence of pointed spaces
  $$\ominfty (1+M) \to \GL \wt R \to \GL R.$$
  By the commutativity of the following diagram in the \cat $\spaces_*$
  $$
  \begin{tikzcd}
    \ominfty (1+M) \ar[rr] \ar[dd] &&
        \GL \widetilde R \ar[dd]
        \ar[dr, "\textsc{Sh}(\iota_{\widetilde R})"]&\\
    & \ominfty M \ar[rr, crossing over] & &
        \ominfty \widetilde R \ar[dd]\\
    \pt \ar[rr] \ar[dr]& &
      \GL R \ar[dr, "\textsc{Sh}(\iota_{R})"]&\\
    & \pt \ar[rr] \ar[from=uu, crossing over]& & \ominfty R\\
  \end{tikzcd}
  $$
  (where $\bpshift$ denotes the basepoint shift of Definition
  \ref{def-bpshift}, and $\iota_R$ denotes the map given in Remark
  \ref{GL-to-mult})
  we get a pointed map $\psi\colon\ominfty (1+M) \to \ominfty M$. It follows from
  Proposition \ref{pigl} that we have the following induced morphism between
  the homotopy fiber exact sequences
  \begin{equation}
  \label{hotfibles}
  \begin{tikzcd}[column sep=tiny]
    \cdots {\pi_k \GL \widetilde R} \ar[r] \ar[d, "\simeq"]
      & {\pi_k \GL R} \ar[r] \ar[d, "\simeq"]
      & {\pi_{k-1} (1+M)} \ar[r] \ar[d]
      & {\pi_{k-1} \GL \widetilde R} \ar[r] \ar[d, "\simeq"]
      & {\pi_{k-1} \GL R \cdots} \ar[d, "\simeq"] \\
  \cdots \pi_k \widetilde R \ar[r]
      & {\pi_k R} \ar[r]
      & {\pi_{k-1} M} \ar[r]
      & {\pi_{k-1} \widetilde R}  \ar[r]
      & {\pi_{k-1} R \cdots}
  \end{tikzcd}
  \end{equation}
  which, by the five lemma, induces isomorphisms $\pi_k (1+M) \simeq \pi_k M$
  for $k \geq 1$.
  To conclude, wee need to prove that $\pi_0 (\psi)\colon \pi_0 (1+M) \to
  \pi_0 M$ is an isomorphism. It
  follows from the exactness of the sequence
  $$
  \begin{tikzcd}
  \cdots \ar[r] & \pi_1 \widetilde R \ar[r]
      & \pi_1 R \ar[r]
      & \pi_0 M \ar[r]
      & \pi_0 \widetilde R \ar[r]
      & \pi_0 R \ar[r]& 0
  \end{tikzcd}
  $$
  that the map $\pi_1 R \to \pi_0 M$ is actually the zero map. If we consider
  (\ref{hotfibles}) in the case $k=1$, we get
  $$
  \begin{tikzcd}[column sep = 1.5em]
    \cdots \pi_1 \widetilde R \ar[r] \ar[d, "\simeq"]
      & \pi_1 R \ar[r] \ar[d, "\simeq"]
      & \pi_0 (1+M) \ar[r] \ar[d]
      & (\pi_0 \widetilde R)^{\times} \ar[r] \ar[d, hook]
      & \left(\pi_0 R\right)^{\times} \ar[r] \ar[d, hook] & 0 \\
  \cdots \pi_k \widetilde R \ar[r]
      & \pi_1 R \ar[r, "0"]
      & \pi_0 M \ar[r]
      & \pi_0 \widetilde R \ar[r]
      & \pi_0 R \ar[r]& 0.
  \end{tikzcd}
  $$
  The five-lemma implies that $\pi_0 (1+M)\to\pi_0 M$ is a monomorphism, which
  in turn tells us that the map $\pi_1 R\to\pi_0 (1+M)$ is also the zero map.
  Hence, $\pi_0 (1+M)$ is the kernel of $$(\pi_0 \widetilde R)^{\times}
  \to \left(\pi_0 R\right)^{\times}$$ and by Proposition \ref{classicalresult},
  $\pi_0 (1+M) \simeq \pi_0 M$, where one isomorphism is given by
  $u \mapsto u-1$.
  Finally, it follows from the definition of $\bpshift$ that
  the map $\pi_0 (\psi)\colon \pi_0 (1+M) \to \pi_0 M$
  is exactly given by $u \mapsto u-1$.
\end{proof}

We can specialize the previous proposition to the case of trivial square-zero
extensions of $\bbS$, obtaining the following corollary.

\begin{cor}\label{ominf-oneplus}
  The functor $\ominfty \circ (1+ -)\colon\Sp^{cn} \to \spaces$ is
  equivalent to the functor $\ominfty$.
\end{cor}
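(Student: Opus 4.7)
The plan is to deduce this directly from the Proposition proved just above, once we verify that the comparison map constructed there is natural in the module. Setting $R = \bbS$ and $\wt R = \bbS \oplus M$ for a varying connective spectrum $M$, the preceding Proposition already gives, for each individual $M$, an equivalence $\ominfty(1+M) \simeq \ominfty M$ in $\spaces$. The only extra content in the corollary over that Proposition is the naturality of this equivalence as $M$ varies over $\Sp^\mathrm{cn}$.

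First, I would exhibit a natural transformation $\psi \colon \ominfty \circ (1+-) \To \ominfty$ of functors $\Sp^\mathrm{cn}\to\spaces$. For this, observe that every ingredient in the construction of $\psi_M$ in the proof of the previous Proposition is functorial in $M$: the functor $\gl$ and its restriction to the fiber $1+M$, the natural map $\iota_{\wt R}\colon \GL\wt R \to \ominfty_m \wt R$ of Remark \ref{GL-to-mult}, the basepoint shift $\bpshift$ of Definition \ref{def-bpshift}, and the map $\ominfty M \to \ominfty \wt R$ induced by the trivial square-zero structure. Assembling these into the same cubical diagram as in the previous Proposition and taking fibers of the vertical maps into $\ominfty R = \ominfty\bbS$, one obtains $\psi_M$ as a morphism of functors $\Sp^\mathrm{cn}\to\spaces_*$, hence in particular in $\spaces$.

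Second, I would invoke the previous Proposition pointwise: for every connective $M$, the specialization of its statement to the trivial square-zero extension $\bbS\oplus M\to\bbS$ says exactly that $\psi_M$ is an equivalence of spaces. A natural transformation of functors into $\spaces$ which is a pointwise equivalence is an equivalence of functors, so this concludes the argument.

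There is no real obstacle here; the only nuisance is making precise the claim that the construction of $\psi$ from the previous proof is functorial in $M$. This is purely a bookkeeping verification, since each of the pieces (the right adjoint $\gl$, the counit $\iota$, the basepoint shift, and the fiber along $\gl\bbS$) is manifestly natural in the input square-zero extension, and trivial square-zero extensions depend functorially on $M$.
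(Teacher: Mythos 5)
Your proposal is correct and follows essentially the same route as the paper, which obtains the corollary simply by specializing the preceding Proposition to the trivial square-zero extensions $\bbS\oplus M\to\bbS$. The only difference is that you explicitly address the naturality in $M$ of the comparison map $\psi_M$, a point the paper leaves implicit; your observation that each ingredient (the right adjoint $\gl$, the counit $\iota$, the basepoint shift, and the fiber construction) is functorial in $M$ is the right way to fill that in.
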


We conclude this section with the promised second proof.

\begin{proof}[(Alternative) Proof of Theorem \ref{big-thm}]
  Again, keeping in mind Proposition \ref{one-plus-m} and Lemma
  \ref{reduction}, we just have to show that the functor $1+ -$ described in
  Lemma \ref{bunch-of-adjs} is equivalent to the identity functor.
  It follows from \cite[2.10]{ggn15} and the equivalence $\Sp^{cn} \simeq
  \grp$ that there is an equivalence
  $$\Fun^{\Pi} (\Sp^{cn},\Sp^{cn}) \simeq \Fun^{\Pi} (\Sp^{cn},\spaces)$$
  (where $\Fun^{\Pi}(\scrC,\scrD)$ denotes the full subcategory of
  $\Fun(\scrC,\scrD)$ spanned by product preserving functors) given by
  postcomposition with $\ominfty$. By Lemma \ref{bunch-of-adjs}, the functor
  $1+ -$ preserves all limits (in particular, products), and by Corollary
  \ref{ominf-oneplus}, its postcomposition with $\ominfty$ is equivalent
  to $\ominfty$ itself. As this is also true for the identity functor,
  $1+ -$ and $\id_{\Sp^{cn}}$ must be equivalent.
\end{proof}

\section{Logarithmic derivatives of $\einf$-rings}\label{applications}

Let $R$ be an ordinary commutative ring, and let $M$ be an $R$-module.
Given a derivation $\partial\colon R \to M$, the function
\begin{displaymath}
\begin{split}
  \log_\partial\colon R^\times &\to M \\
  r &\mapsto \partial(r)r^{-1}
\end{split}
\end{displaymath}
is easily seen to be a group homomorphism (from the group of units of $R$ to
the underlying additive Abelian group of $M$), and it is called the
\emph{logarithmic derivative} relative to $\partial$.
Our goal for this section, is to construct a homotopy coherent analogue of
logarithmic derivatives for $\einf$-rings.

To this end, let now $R$ be a connective $\einf$-ring (see Remark
\ref{connectivity-hp}).
and let $\partial\colon L_R \to M$ be a derivation from $R$ to an $R$-module
$M$.
Let us denote by $\wt\partial\colon R \to M$ the composite map
$$R \xto{d_\partial} R\oplus M \xto{\mathrm{pr}_2} M$$
in the \cat $\Sp$ (where $d_\partial$ is the map adjoint to $\partial$, and
$\mathrm{pr}_2$ is the projection on the module part of the square-zero
extension $R \oplus M$). As a first approach, it is possible
to mimic in a straightforward way the ordinary definition at the level of
spaces, and thus to consider the composition
\begin{equation}\label{naive-derivative}
  \GL R \xto{\Delta} \GL R \times \GL R \xto{(\iota_R \circ \inv) \times
    (\ominfty\wt\partial \circ \iota_R)} \ominfty R \times \ominfty M
  \xto{a} \ominfty M
\end{equation}
in the \cat of spaces
(where $\iota_R$ is as in Remark \ref{GL-to-mult},
$\inv$ is the inversion map given by the $\einf$-group structure of $\GL R$,
and $a$ is given by the action of $R$ on $M$).
Such a map can be shown to be a morphism of group objects in the homotopy
category $\ho\spaces$ (i.e. a map of H-spaces). Our goal
will be to promote
such a map to a map of $\einf$-groups, or to be more precise, to produce
a morphism of connective spectra whose underlying map is homotopic to
(\ref{naive-derivative}). In order to do so, we will exploit Theorem
\ref{big-thm}, applied to the trivial square-zero extension $R \oplus M$.

\begin{constr}\label{constr-log-der}
  Let $R$ be a connective $\einf$-ring, and let $\partial\colon L_R \to M$ be
  a derivation of $R$ into an $R$-module $M$.
  Theorem \ref{big-thm} implies that there exists a co/fiber sequence
  $$M \to \gl (R \oplus M) \to \gl R$$
  in the \cat $\Sp$, which splits, since $R \oplus M$ is a trivial square-zero
  extension; that is, we have that
  \begin{equation}\label{gl-plus-splitting}
  \gl(R \oplus M) \simeq (\gl R ) \oplus M
  \end{equation}
  in the \cat $\Sp$.
  Let us momentarily denote by $i\colon\gl R\to\gl (R\oplus M)$ the direct
  summand inclusion.
  Let now
  $$d_\partial \colon R \to R\oplus M$$ be the map adjoint to $\partial$ (which,
  by definition, is a section of the trivial square-zero extension $R \oplus M
  \to R$).
  We will denote by $\pre\log_\partial \colon \gl R \to \gl (R \oplus M)$, the
  composition
  $$
  \begin{tikzcd}[column sep = huge, row sep = huge]
    \gl R \ar[r, "\Delta"]
      \ar[ddr, bend right=45, "\pre\log_\partial"']
    & \gl R \oplus \gl R
      \ar[d, "(i \circ \inv) \oplus \gl(d_\partial)"] \\
    & \gl (R \oplus M) \oplus \gl (R \oplus M) \ar[d, "\mu"] \\
    & \gl (R \oplus M)
  \end{tikzcd}
  $$
  (where $\mu$ is the multiplication map induced by Remark
  \ref{additive-spectra}).
\end{constr}

\begin{rem}\label{plog-to-zero}
  Let us denote by $p\colon\gl(R\oplus{M}) \to \gl R$ the projection to the
  direct summand given by (\ref{gl-plus-splitting}).
  The composition
  $$\gl R \xto{\pre\log_\partial} \gl(R\oplus{M}) \xto{p} \gl R$$
  is nullhomotopic. In fact, by construction, the triangle
  $$
  \begin{tikzcd}
    \gl R \ar[rr, "i \circ \inv"] \ar[dr, bend right, "\inv"']& &
      \gl(R\oplus{M}) \ar[dl, bend left, "p"] \\
    & \gl R &
  \end{tikzcd}
  $$
  commutes.
  On the other hand, by
  definition of $d_\partial$, the map $\gl(d_\partial)$ fits into the
  commutative triangle
  $$
  \begin{tikzcd}
    \gl R \ar[rr, "\gl(d_\partial)"] \ar[dr, bend right, equal] & &
      \gl(R\oplus{M}) \ar[dl, bend left, "p"] \\
    & \gl R. &
  \end{tikzcd}
  $$
  Since, as it follows e.g. from Remark \ref{additive-spectra},
  $\mu \circ (p\oplus{p}) \simeq p \circ \mu$, we have that
  \begin{displaymath}
  \begin{split}
  p \circ \pre\log_\partial &=
    p \circ \mu \circ (i \circ \inv \oplus \gl(d_\partial))\circ\Delta\\
  &\simeq\mu \circ (p\oplus{p}) \circ
    (i \circ \inv \oplus \gl(d_\partial))\circ\Delta\\
  &\simeq \mu \circ (p \circ i \circ \inv) \oplus (p \circ \gl(d_\partial))
    \circ\Delta\\
  &\simeq \mu \circ (\inv \oplus \id)\circ\Delta
  \end{split}
  \end{displaymath}
  which, again by Remark \ref{additive-spectra}, is nullhomotopic.
\end{rem}

\begin{defi}
  Let $R$ be a connective $\einf$-ring, and $\partial \colon L_R \to M$ a
  derivation. We define the \emph{logarithmic derivative} $\log_\partial$
  induced by $\partial$ as the map of spectra
  $$\log_\partial \colon \gl R \to M$$
  induced by the map $\pre\log_\partial$ given by Construction
  \ref{constr-log-der},
  and the universal property of fibers, applied by virtue of Remark
  \ref{plog-to-zero}; i.e. $\log_\partial$ is the essentially
  unique map rendering the following diagram commutative
  $$
  \begin{tikzcd}
    \gl R \ar[d, dotted, "\log_\partial"']
    \ar[dr, bend left, "\pre\log_\partial"]&& \\
  M \ar[r] & \gl (R \oplus M) \ar[r] & \gl R.
  \end{tikzcd}
  $$
\end{defi}

\begin{rem}
  By virtue of Remark \ref{infinite-loops}, upon applying $\ominfty_a$ to the
  logarithmic derivative, we obtain a map of $\einf$-groups
  $$\ominfty_a \log_\partial \colon \GL R \to \ominfty_a M$$
  whose underlying map of spaces is homotopic to the map
  (\ref{naive-derivative}) given in the introduction to this chapter.

  To see this, let us begin by applying the natural transformation
  $\iota\colon \GL \Rightarrow \ominfty_m$ given in Remark \ref{GL-to-mult}
  to $d_\partial$, in order to get a commutative square
  $$
  \begin{tikzcd}
  \GL R \ar[r, "\iota_R"] \ar[d, "\GL (d_\partial)"'] &
    \ominfty_m R \ar[d, "\ominfty_m (d_\partial)"] \\
  \GL (R \oplus M) \ar[r, "\iota_{R\oplus R}"] & \ominfty_m (R \oplus M)
  \end{tikzcd}
  $$
  in the \cat of $\einf$-groups. If we consider the above square in
  the \cat $\spaces$,
  combined with (\ref{gl-plus-splitting}), we get the following
  $$
  \begin{tikzcd}
  \GL R \ar[r, "\iota_R"] \ar[d, "\GL (d_\partial)"'] &
    \ominfty R \ar[d, "\ominfty (d_\partial)"] \\
  \GL R \times \ominfty M \ar[r, "\iota_{R\oplus M}"] \ar[d, "\pr_2"'] &
    \ominfty R \times \ominfty M \ar[d, "\pr_2"] \\
  \ominfty M \ar[r, equal] & \ominfty M
  \end{tikzcd}
  $$
  (where $\mathrm{pr}_2$ is the obvious projection)
  showing that
  \begin{equation}\label{pr-GL}
  \ominfty \wt \partial \circ \iota_R \simeq \pr_2 \circ
  \ominfty (d_\partial) \circ \iota_r \simeq \pr_2 \circ \GL (d_\partial)
  \end{equation}
  as map of spaces. Now, by (\ref{gl-plus-splitting}), $\ominfty_a
  \log_\partial$ is homotopic to
  $$\GL R \xto{\ominfty_a (\pre\log_\partial)} \GL (R\oplus M)
    \xto{\mathrm{pr}_2} \ominfty_a M.$$
  Unraveling the definitions, we have that, in the \cat $\spaces$:
  \begin{displaymath}
  \begin{split}
  \ominfty \log_\partial &\simeq \pr_2 \circ \ominfty(\pre\log_\partial)\\
  &\simeq
  \pr_2 \circ \ominfty \Big(\mu \circ (i \circ \inv)\oplus\gl(d_\partial)
    \circ \Delta\Big) \\
  &\simeq \pr_2 \circ \wt m \circ \ominfty \Big(i \circ \inv \oplus
    \gl(d_\partial)\Big) \circ \Delta \\
  &\simeq \pr_2 \circ \wt m \circ \Big(\ominfty (i \circ \inv) \times
    \GL(d_\partial)\Big) \circ \Delta \\
  \end{split}
  \end{displaymath}
  (where $\wt m$ denotes the multiplication map on $\GL (R \oplus M)$).
  By Remark
  \ref{triv-sq-zero-structure}, this is homotopic to
  $$m \circ \Big( (\iota_R \circ \inv) \times (\pr_2 \circ \GL(d_\partial))
  \Big) \circ \Delta$$
  which in turn, by (\ref{pr-GL}), is homotopic to (\ref{naive-derivative}).
\end{rem}

We conclude this chapter showing that for ordinary rings, regarded as discrete
$\einf$-rings, our definition of logarithmic derivatives recovers the usual
one, upon passing to connected components.

\begin{rem}
  If $R$ is a discrete $\einf$-ring, \cite[7.4.3.8]{HA} shows that
  $$\pi_0 L_R \simeq \Omega_{\pi_0 R}$$
  in the ordinary category of discrete $\pi_0 R$-modules.
  As a consequence, any derivation $\partial\colon L_R \to M$
  determines an ordinary derivation $\pi_0 R \to \pi_0 M$.
\end{rem}

\begin{prop}
  Let $R$ be a discrete $\einf$-ring, and let $\partial\colon L_R \to
  M$ be a derivation of $R$ into an $R$-module $M$. Then, the morphism
  $$\pi_0\log_\partial\colon (\pi_0 R)^\times \to \pi_0 M$$
  is the ordinary logarithmic derivative associated to $\pi_0 \partial$.
\end{prop}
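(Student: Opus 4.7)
The plan is to leverage the Remark immediately preceding the statement, which identifies the underlying pointed map $\ominfty \log_\partial$ with the naive space-level composition (\ref{naive-derivative}). Since $R$ is discrete, all the spaces involved (namely $\GL R$, $\ominfty R$ and $\ominfty M$) are discrete, so computing $\pi_0$ of that composition reduces to chasing an element through its factors.

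So I would fix $u \in (\pi_0 R)^\times = \pi_0 \GL R$ and track it through (\ref{naive-derivative}). The diagonal yields $(u,u)$; the map $\iota_R \circ \inv$ sends the first coordinate to $u^{-1} \in \pi_0 R$, since the group inversion of $\GL R$ restricts on $\pi_0$ to the multiplicative inverse in the ring; the map $\ominfty \wt\partial \circ \iota_R$ sends the second coordinate to $\pi_0 \wt\partial(u) \in \pi_0 M$; and the action $\ominfty R \times \ominfty M \to \ominfty M$ induces on $\pi_0$ the module action of $\pi_0 R$ on $\pi_0 M$, producing the element $\pi_0\wt\partial(u) \cdot u^{-1}$.

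It then remains to identify $\pi_0 \wt\partial$ with the ordinary derivation $\pi_0 R \to \pi_0 M$ associated to $\pi_0 \partial$ via the isomorphism $\pi_0 L_R \simeq \Omega_{\pi_0 R}$ recalled in the Remark preceding the statement. Since $\wt\partial = \pr_2 \circ d_\partial$, and $d_\partial \colon R \to R \oplus M$ is a section of the trivial square-zero extension, applying $\pi_0$ produces a section of the ordinary split extension $\pi_0 R \oplus \pi_0 M \to \pi_0 R$. The universal property of $\Omega_{\pi_0 R}$ identifies such sections with ordinary derivations $\pi_0 R \to \pi_0 M$, and by naturality this identification sends $\pi_0 d_\partial$ to the classical derivation corresponding to $\pi_0 \partial$. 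Hence $\pi_0\log_\partial(u) = (\pi_0\partial)(u) \cdot u^{-1}$, which is the classical logarithmic derivative of $\pi_0\partial$. The only delicate step is precisely this compatibility on $\pi_0$ between the $\einf$-theoretic derivation/square-zero-extension adjunction and its ordinary analogue; once that is in hand, the rest of the verification is an immediate diagram chase made possible by discreteness.
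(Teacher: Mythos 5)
Your argument is correct, but it takes a route different from the paper's. You compute $\pi_0$ of the naive space-level composition (\ref{naive-derivative}), invoking the remark immediately preceding the proposition which identifies the underlying map of $\ominfty_a\log_\partial$ with that composition; the element chase through the diagonal, the inversion, $\iota_R$, $\ominfty\wt\partial$ and the action map $a$ then lands directly on $\wt\partial(u)u^{-1}\in\pi_0 M$. The paper instead never touches (\ref{naive-derivative}) here: it computes $\pi_0\,\pre\log_\partial$ straight from Construction \ref{constr-log-der}, working inside $\pi_0\,\gl(R\oplus M)\simeq\big((\pi_0 R)\oplus(\pi_0 M)\big)^\times$, where the spectrum-level addition $\mu$ becomes the multiplication of the ordinary square-zero extension, so that $(r^{-1},0)\cdot(r,\wt\partial(r))=(1,\wt\partial(r)r^{-1})$ by Remark \ref{triv-sq-zero-structure}; it then uses Proposition \ref{classicalresult} to identify the passage from $\pre\log_\partial$ to $\log_\partial$ on $\pi_0$ with the isomorphism $u\mapsto u-1$ between $1+\pi_0 M$ and $\pi_0 M$. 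Your version is more economical if one takes the preceding remark as given, but it inherits that remark's (nontrivial) homotopy-coherent bookkeeping; the paper's version is self-contained relative to the construction and makes the role of the classical Proposition \ref{classicalresult} explicit. Both proofs need, and neither belabors, the same compatibility you flag as the delicate step: that $\pi_0 d_\partial$ is the section of $\pi_0 R\oplus\pi_0 M\to\pi_0 R$ classified by the ordinary derivation corresponding to $\pi_0\partial$ under $\pi_0 L_R\simeq\Omega_{\pi_0 R}$ (the paper simply names this derivation $\wt\partial$ and uses it in the third line of its computation). You would strengthen your write-up by citing \cite[7.4.3.8]{HA} for that identification rather than leaving it as an appeal to naturality.
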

\begin{proof}
  Let us denote by $\wt \partial \colon \pi_0 R \to \pi_0 M$ the ordinary
  derivation determined by $\pi_0 \partial\colon \Omega_{\pi_0 R} \to \pi_0 M$.
  Unraveling the definitions, we see that the value of
  $$\pi_0 \pre\log_\partial\colon (\pi_0 R)^\times \to \pi_0(\gl (R
  \oplus M)) \simeq (\pi_0 R)^\times \oplus \pi_0 M$$ on any element $r \in
  \left(\pi_0 R\right)^{\times}$ is
  \begin{displaymath}
  \begin{split}
  \pi_0 \pre\log_\partial (r)
    &=\pi_0 \Big(\mu \circ \big(i \circ
      \inv \oplus \gl(d_\partial)\big)\circ\Delta\Big)(r)\\
    &=\pi_0 \Big(\mu \circ (i \circ \inv \oplus \gl(d_\partial))\Big)(r,r)\\
    &=\pi_0 (\mu)\Big(\big(r^{-1},0\big),\big(r,\wt\partial(r)\big)\Big)\\
    &= \Big(r^{-1}, 0\Big)\cdot\Big(r,\wt\partial (r)\Big)\\
    &= \Big(1, \wt\partial (r)r^{-1}\Big)
  \end{split}
  \end{displaymath}
  where the last equality follows from Remark \ref{triv-sq-zero-structure}.
  Now, it follows from Proposition \ref{classicalresult} that the induced
  map $\pi_0\log_\partial\colon (\pi_0 R)^\times \to \pi_0 M$ is just the
  projection on the second factor of $\pi_0\pre\log_\partial$, and hence it
  is given on elements by
  $$r \mapsto \wt\partial(r)r^{-1}.$$
\end{proof}

\appendix
\section{The classical result}\label{appendix-classical}
Our goal in this appendix is to recall and prove Proposition
\ref{classicalresult}, which is the ordinary version of Theorem \ref{big-thm}.
Throughout by ``ring'' we mean commutative ring with unit;
by ``ring (homo)morphism'' we mean unit-preserving ring homomorphism.

\begin{defi}
    Given a surjective morphism of rings
    \begin{displaymath}
        \phi\colon \widetilde R \to R
    \end{displaymath}
    the morphism $\phi$ is said to be a \emph{square-zero extension} if
    $\left ( \ker \phi \right )^2 = 0$.
\end{defi}

With a little abuse of terminology, we will say that ``$\wt R$ is a
square-zero extension of $R$ by $I$'' if $R$ and $\wt R$ are rings, and
$R \simeq \wt R/I$ for some ideal
$I \subset R$ such that $I^2 = 0$.

\begin{rem}
    Let $\phi\colon \widetilde R \to R$ be a square-zero extension and let
    $I\coloneqq\ker\phi$ denote its kernel.
    Then we have a short exact sequence of
    $\wt R$-modules
    \begin{displaymath}
        0 \longrightarrow I \longrightarrow \widetilde R \longrightarrow R
        \longrightarrow 0.
    \end{displaymath}
\end{rem}

\begin{prop}
\label{classicalresult}
    Given a square-zero extension $\phi\colon \widetilde R \to R$, let
    $I\coloneqq\ker\phi$ denote its kernel. Then there exists an induced short
    exact sequence of groups
    \begin{displaymath}
        0 \longrightarrow I \stackrel{\iota}{\longrightarrow} \GL \widetilde R
        \stackrel{\widetilde \phi}{\longrightarrow} \GL R \longrightarrow 0
    \end{displaymath}
    where
    \begin{displaymath}
    \begin{split}
        \iota (r) = 1+r;\\
        \widetilde \phi (r) = \phi (r).
    \end{split}
    \end{displaymath}
\end{prop}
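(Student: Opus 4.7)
The plan is to verify the four pieces of the proposed short exact sequence in turn: well-definedness of $\iota$, its being a group homomorphism, exactness at $\GL\wt R$, and surjectivity of $\wt\phi$. The relation $I^{2}=0$ will do most of the work.

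First I would check that $\iota$ actually takes values in $\GL \wt R$. Given $r \in I$, the square-zero hypothesis yields
\[
(1+r)(1-r) = 1 - r^{2} = 1,
\]
so $1+r$ is a unit with inverse $1-r$. For the group-homomorphism property, writing $\iota$ on a sum and expanding the product,
\[
\iota(r)\iota(s) = (1+r)(1+s) = 1 + r + s + rs = 1 + (r+s) = \iota(r+s),
\]
again because $rs \in I^{2} = 0$. Injectivity is immediate: $1+r = 1$ forces $r = 0$.

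Next I would analyse exactness at $\GL\wt R$. The composite is trivial because $\wt\phi\,\iota(r) = \phi(1+r) = 1 + \phi(r) = 1$ for $r \in I = \ker \phi$. Conversely, if $u \in \GL \wt R$ satisfies $\wt\phi(u) = 1$, then $\phi(u - 1) = 0$, hence $u - 1 \in I$ and $u = \iota(u-1)$.

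The step I expect to be the most delicate is surjectivity of $\wt\phi\colon \GL \wt R \to \GL R$: surjectivity of $\phi$ on the underlying rings does not directly give surjectivity on units. Given $v \in \GL R$, pick any lifts $\tilde v, \tilde w \in \wt R$ of $v$ and $v^{-1}$. Then $\phi(\tilde v \tilde w) = 1$, so $\tilde v \tilde w = 1 + r$ for some $r \in I$. By the earlier observation $1 + r$ is a unit in $\wt R$, so $\tilde v \cdot \bigl(\tilde w (1+r)^{-1}\bigr) = 1$, exhibiting $\tilde v$ as a unit that lifts $v$. This completes the exact sequence.
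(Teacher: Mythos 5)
Your proposal is correct and follows essentially the same route as the paper: the identity $(1+r)(1-r)=1$ for $r\in I$ handles well-definedness and the homomorphism property, the kernel computation gives exactness in the middle, and surjectivity is obtained by lifting $v$ and $v^{-1}$ and correcting the product $\tilde v\tilde w = 1+r$ by the unit $1+r$ (the paper phrases this as $\alpha\beta(2-\alpha\beta)=1$, which is the same computation). No gaps.
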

\begin{proof}
    First we need to check that everything is well defined. Since by hypothesis
    we have that $\phi$ is a square-zero extension, then
    \begin{displaymath}
        \iota(r)\iota(-r) = (1+r)(1-r) = 1
    \end{displaymath}
    hence $\iota(-r) = \iota(r)^{-1}$; again, by hypothesis
    \begin{displaymath}
        \iota(r) \iota(r') = (1+r)(1+r') = 1+r+r' = \iota(r+r'),
    \end{displaymath}
    thus $\iota$ is a group homomorphism.
    Since $\phi$ is a ring homomorphism, it preserves units, therefore
    $\widetilde \phi$ is well defined.

    The sequence is also exact. Clearly, $\iota$ is injective. To see that
    $\phi$ reflects units, let $a, \ b \in \GL R$ be such that $ab=1$, and let
    $\alpha , \ \beta \in \widetilde R$ be such that $\phi(\alpha)=a$ and
    $\phi ( \beta ) = b$. It follows from
    \begin{displaymath}
        \phi ( \alpha \beta - 1 ) = ab - 1 = 0
    \end{displaymath}
    that $\alpha \beta -1 \in I$; hence, as $(\alpha \beta -1)^2 = 0$ we
    have $$\alpha \beta ( 2 - \alpha \beta) = 1$$ proving that both $\alpha$
    and $\beta$ are units in $\widetilde R$.
    Now, since 
    \begin{displaymath}
        \ker \widetilde \phi = \widetilde  \phi ^{-1} ( \{1\} ) =
        \phi ^{-1} ( \{1\} ) = 1+I = \iota (I)
    \end{displaymath}
    the sequence is exact.
\end{proof}

\section{Fundamentals of higher commutative algebra}\label{prelim}

\subsection{Symmetric monoidal \cats}
\begin{notat}
  We denote by $\einf\operad$ the \cat $\nerve(\mathrm{Fin}_*)$, that is,
  the nerve of the category of finite pointed sets.
  Given any $n \in \bbN$, and any $1 \leq i \leq n$, we denote by $\rho^i \colon
  \langle n \rangle \to \langle 1 \rangle$ the function sending all elements
  of $\langle n \rangle$ to (the basepoint) $0$,
  with the exception of the element $i$.
\end{notat}

\begin{defi}
  A \emph{symmetric monoidal \cat} is the datum of an \cat $\scrC\operad$
  together with a coCartesian fibration of simplicial sets $p\colon \scrC\operad
  \to \einf\operad$ satisfying the following ``Segal condition''.
  \begin{enumerate}
  \item[$\bullet$] For every $n\geq 0$, and every $0\leq i\leq n$ the functors
    $$\rho^i_! \colon \scrC\operad_{\langle n \rangle} \to \scrC\operad_{\langle 1
    \rangle}$$
  induced by the functions $\rho^i$ and the coCartesian fibration $p$,
  determine an equivalence
    $$\left( \rho^i_! \right)_{i=1}^n \colon
    \scrC\operad_{\langle n\rangle}
    \stackrel{\sim}{\longrightarrow}\prod_{i=1}^n
    \scrC\operad_{\langle 1 \rangle}.$$
  \end{enumerate}
  We denote by $\scrC$ the \cat
  $\scrC_{\langle 1 \rangle}\operad$, and, slightly abusing terminology,
  we say that $p$ exhibits a \emph{symmetric monoidal structure} on $\scrC$,
  and that $\scrC$ itself is a \emph{symmetric monoidal \cat}.
\end{defi}

We refer the reader to \cite[Chapter 2]{HA} for a discussion about how this
definition gives a homotopy coherent generalization of
the ordinary notion of a symmetric monoidal category. In particular, if $\scrC
\operad \to \einf\operad$ is a symmetric monoidal structure on $\scrC$, it
follows from the definitions that there exists a uniquely (up to canonical
isomorphism) determined bifunctor, denoted
$\otimes\colon \scrC \times \scrC \to \scrC$,
encoded by the symmetric monoidal structure (see \cite[2.0.0.6, 2.1.2.20]{HA}).

\begin{defi}
  Let $\scrC$ be a symmetric monoidal \cat. We say that $\scrC$ is
  \emph{closed} if, for each $C \in \scrC$, the functor
  $$\scrC \simeq \scrC \times \Delta^0 \xto{\id \times C}
    \scrC \times \scrC \xto{\otimes} \scrC$$
  (informally given by $D \mapsto D \tensor C$) admits a right adjoint.
\end{defi}

\begin{defi}
  Let $\scrC$ and $\scrD$ be symmetric monoidal \cats with symmetric
  monoidal structures $p\colon \scrC\operad \to \einf\operad$ and $q\colon
  \scrD\operad \to \einf\operad$.
  \begin{enumerate}
    \item A \emph{lax symmetric monoidal functor} is given by a map
    of $\infty$-operads $F\colon \scrC\operad \to \scrD\operad$ (i.e. a
    morphism of
    \cats over $\einf\operad$, carrying $p$-coCarte\-sian lifts of inert
    morphisms of $\einf\operad$ to $q$-coCarte\-sian morphisms in
    $\scrD\operad$).

    \item A \emph{symmetric monoidal functor} is given by a morphism of \cats
    over $\einf\operad$ carrying $p$-coCartesian morphisms to $q$-coCartesian
    morphisms.
  \end{enumerate}
\end{defi}

We sometimes abuse notation, and refer to (lax) symmetric monoidal
functors indicating only the underlying functor between the underlying
$\infty$-cat\-e\-gories.

\begin{ex}\label{symmetric-monoidal-cats}
  We have the following two notable examples (see also Definition \ref{spectra}):
\begin{enumerate}
  \item\label{cartesian-structure}
  \cite[Section 2.4.1]{HA} Given any \cat $\scrC$ with finite products,
  it has a symmetric monoidal structure, denoted
  $$\scrC^{\times} \to \einf\operad,$$
  encoding its Cartesian product.
  \item \cite[4.8.2.14]{HA} The \cat $\spaces _*$ of pointed spaces has a
  symmetric monoidal structure, denoted
  $$\spaces _* ^{\wedge}\to \einf\operad ,$$
  encoding the smash product of pointed spaces.
\end{enumerate}
\end{ex}

\begin{defi}
  Let $\scrC$ be a symmetric monoidal \cat. We let $\calg(\scrC)$ denote the
  full subcategory of sections $\einf\operad \to \scrC\operad$ of the structure
  map $\scrC\operad \to \einf\operad$ spanned by lax symmetric monoidal
  functors. We refer to $\calg(\scrC)$ as the \cat of \emph{commutative algebra
  objects} of $\scrC$.
\end{defi}

\begin{defi}
  Let $\scrC$ be an \cat with finite products. A \emph{commutative monoid
  object} of $\scrC$ is given by a functor $M\colon\einf\operad \to \scrC$ such
  that the morphisms $M(\langle n \rangle) \to M(\langle 1 \rangle)$ induced
  by the inert morphisms $\langle n \rangle \to \langle 1 \rangle$ exhibit
  $M(\langle n \rangle)$ as an $n$-fold product of $M(\langle 1 \rangle)$.
  We let $\mon(\scrC)$ denote the full subcategory of $\Fun(\einf\operad,
  \scrC)$ spanned by the commutative monoids of $\scrC$.
\end{defi}

As it is clear, in an \cat with finite products, it is possible to define
both commutative algebra objects with respect to the Cartesian symmetric
monoidal structure and commutative monoid objects; in fact, the two
definitions agree.

\begin{prop} \cite[2.4.2.5]{HA}
  Let $\scrC$ be an \cat with finite products, considered as a symmetric
  monoidal \cat with the Cartesian structure of Example
  \ref{symmetric-monoidal-cats}.\ref{cartesian-structure}.
  Then, there is an equivalence of \cats
  $$\calg(\scrC) \simeq \mon(\scrC).$$
\end{prop}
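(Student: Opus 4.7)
The plan is to unpack both definitions and construct a direct comparison using the explicit structure of the Cartesian symmetric monoidal \cat. First I would recall (from \cite[Section 2.4.1]{HA}) that for an \cat $\scrC$ with finite products, the coCartesian fibration $\scrC\operad\to\einf\operad$ has fibers canonically equivalent to $\scrC^n$ over $\langle n\rangle$, and the coCartesian pushforward along the inert $\rho^i\colon\langle n\rangle\to\langle 1\rangle$ is the $i$-th projection $\scrC^n\to\scrC$; the pushforward along a general $f\colon\langle n\rangle\to\langle m\rangle$ is the functor $\scrC^n\to\scrC^m$ whose $j$-th coordinate is the product $\prod_{i\in f^{-1}(j)}$ in $\scrC$.

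I would then build the comparison functor $\Phi\colon\calg(\scrC)\to\mon(\scrC)$ as follows. Given a lax symmetric monoidal section $A\colon\einf\operad\to\scrC\operad$, write $A(\langle n\rangle)=(x_1^n,\ldots,x_n^n)\in\scrC^n$ under the fiber equivalence. The lax monoidal condition applied to each inert $\rho^i$ says that the coCartesian pushforward of $A(\langle n\rangle)$ along $\rho^i$ is $A(\langle 1\rangle)$, i.e.\ $x_i^n\simeq A(\langle 1\rangle)$ for every $i$, so $A(\langle n\rangle)\simeq A(\langle 1\rangle)^n$ compatibly with the Segal maps. The underlying functor $\Phi(A)\colon\einf\operad\to\scrC$ obtained by extracting this fiber data then satisfies the Segal condition by the preceding observation. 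In the other direction, I would define $\Psi\colon\mon(\scrC)\to\calg(\scrC)$ by sending $M$ to the section $\langle n\rangle\mapsto(M(\langle 1\rangle),\ldots,M(\langle 1\rangle))\in\scrC^n\simeq\scrC\operad_{\langle n\rangle}$; its action on a morphism $f\colon\langle n\rangle\to\langle m\rangle$ is dictated by the product structure encoded by the construction of $\scrC\operad$, which on fibers matches the functoriality of $M$ precisely because $M(\langle m\rangle)\simeq M(\langle 1\rangle)^m$.

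The main obstacle is categorical coherence: both $\Phi$ and $\Psi$ should be actual \cat functors and mutually inverse equivalences, not merely bijections at the level of objects. The cleanest way to bypass the explicit coherence bookkeeping is to invoke the universal property of the Cartesian symmetric monoidal structure (see \cite[2.4.1.7]{HA}), which characterizes lax symmetric monoidal functors into $\scrC\operad$ in terms of diagrams in $\scrC$ governed by its products. Specializing this universal property to sections of $\scrC\operad\to\einf\operad$ identifies $\calg(\scrC)$ with the full subcategory of $\Fun(\einf\operad,\scrC)$ spanned by functors satisfying the Segal condition, i.e.\ precisely $\mon(\scrC)$, thereby producing the sought equivalence.
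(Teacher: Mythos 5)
Your proposal is correct, but note first that the paper itself contains no argument for this statement: it is recalled purely by citation to \cite[2.4.2.5]{HA} among the background results of the appendix, so the only substantive comparison is with Lurie's proof, and your third paragraph is essentially that proof. Composition with the canonical Cartesian structure $\pi\colon\scrC^{\times}\to\scrC$ induces, by \cite[2.4.1.7]{HA}, an equivalence between maps of $\infty$-operads $\einf\operad\to\scrC^{\times}$ --- which, being sections of $\scrC^{\times}\to\einf\operad$ carrying inerts to coCartesians, are exactly the objects of $\calg(\scrC)$ as defined in the paper --- and the full subcategory of $\Fun(\einf\operad,\scrC)$ spanned by functors satisfying the Segal condition, which is $\mon(\scrC)$ by definition; this is precisely how \cite{HA} deduces 2.4.2.5. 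You are right, however, to treat your first two paragraphs as heuristic and to discard them in favor of the universal property: prescribing $\Phi(A)$ by ``extracting fiber data'' and $\Psi(M)$ by its objectwise values $(M(\langle 1\rangle),\dots,M(\langle 1\rangle))\in\scrC^{n}$ does not define functors of \cats, since a fiberwise assignment carries none of the coherence data attached to the non-inert morphisms of $\mathrm{Fin}_*$, and there is no natural ``projection onto a coordinate'' of $\scrC\operad_{\langle n\rangle}\simeq\scrC^{n}$ compatible with all maps $\langle n\rangle\to\langle m\rangle$; the functorial extraction of the underlying monoid is exactly postcomposition with $\pi$, which is what \cite[2.4.1.7]{HA} packages. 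So the proof is sound, with the entire logical weight resting on the final paragraph, and it coincides in route with the proof in the cited source rather than offering a genuinely independent argument.
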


\begin{rem}\label{forget-algebras}
  A commutative algebra object $A\colon\einf\operad \to \scrC\operad$ of a
  symmetric
  monoidal \cat $\scrC$ determines an object $A(\langle 1 \rangle)$ of $\scrC$
  together with a homotopy coherent analogue of the structure of a commutative
  algebra object of an ordinary symmetric monoidal category. In fact, the
  assignment $A \mapsto A(\langle 1 \rangle)$ extends to a forgetful functor
  $$\calg(\scrC) \to \scrC.$$
  With a little abuse of notation, we often denote $A(\langle 1 \rangle)$
  just by $A$,
  and refer to it as a commutative algebra object (or a commutative monoid
  object, if the symmetric monoidal structure on $\scrC$ is Cartesian).
\end{rem}

\begin{rem}\label{ordinary-monoids}
  It follows from the definitions, that, if $M$ is a commutative monoid in
  an \cat $\scrC$, then the underlying object $M$ in the homotopy category
  $\ho\scrC$ is a commutative monoid object (in the ordinary sense).
\end{rem}

\begin{prop} \label{comm-groups} \cite[1.1]{ggn15}
  Let $\scrC$ be an \cat with finite products, and let $M$ be a commutative
  monoid object of $\scrC$. Then the following conditions are equivalent:
  \begin{enumerate}
  \item $M$ admits an inversion map for the multiplication induced by the
  commutative algebra object structure.
  \item The commutative monoid object of $\ho\scrC$ underlying $M$ is a
  group object.
  \end{enumerate}
\end{prop}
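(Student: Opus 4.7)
The plan is to prove the equivalence by exploiting the shear map and the fact that equivalences in an \cat are detected in its homotopy category.

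The direction (1) $\Rightarrow$ (2) is essentially formal: any inversion map $\inv\colon M\to M$ in $\scrC$ descends under the localization $\scrC\to\ho\scrC$ to an inversion for the underlying monoid object, so $M$ becomes a group object in the ordinary sense in $\ho\scrC$. The real content is in the converse.

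For (2) $\Rightarrow$ (1), I would build the inversion out of the \emph{shear map}
\begin{displaymath}
  s = (\pr_1, \mu)\colon M\times M \longrightarrow M\times M,
\end{displaymath}
informally $(x,y)\mapsto (x, xy)$, which is well-defined in $\scrC$ from the commutative monoid structure on $M$ together with the universal property of the product. The key observation is that a map in an \cat is an equivalence if and only if its image in the homotopy category is an isomorphism. Hence it suffices to check that $s$ becomes an isomorphism in $\ho\scrC$. But by Remark \ref{ordinary-monoids}, passing to $\ho\scrC$ sends $s$ to the shear map of the underlying ordinary commutative monoid, and condition (2) precisely says that this shear map is invertible (being a group object is equivalent to invertibility of the shear). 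Thus $s$ is already an equivalence in $\scrC$.

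From this equivalence we extract the inversion. Let $e\colon \pt\to M$ denote the unit. The composite
\begin{displaymath}
  M \xrightarrow{(e,\,\id)} M\times M \xrightarrow{s^{-1}} M\times M \xrightarrow{\pr_2} M
\end{displaymath}
is a candidate for $\inv$: informally it sends $x$ to the unique $y$ with $xy = e$, namely $x^{-1}$. The main technical point, and where I would expect to need to be careful, is verifying that this map really provides a coherent inversion structure on $M$ rather than just an inverse up to homotopy; but once $s$ is established to be an equivalence in $\scrC$, the relevant two-sided inverse identities follow by applying $s^{-1}$ to the tautological identities for $s$, which hold in $\scrC$ on the nose from the commutative monoid structure. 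The reference \cite{ggn15} supplies the detailed bookkeeping for these identifications, so in the paper it is enough to invoke it once the shear-map reduction has been made.
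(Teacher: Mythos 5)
The paper does not prove this proposition at all: it is quoted from \cite[1.1]{ggn15} with no internal argument, so there is nothing in the text to compare yours against. Your shear-map reduction is the standard proof of this equivalence (and essentially the one in the cited reference), and its ingredients are all sound: $s=(\pr_1,\mu)$ is a well-defined map in $\scrC$, equivalences in an \cat are detected in the homotopy category, the functor $\scrC\to\ho\scrC$ carries product cones to product cones so that $s$ is sent to the ordinary shear map of the underlying monoid (this is what makes Remark \ref{ordinary-monoids} applicable), and for an ordinary commutative monoid object invertibility of the shear map is equivalent to being a group object. There is one concrete slip in the extraction step: with $s$ informally given by $(x,y)\mapsto(x,xy)$, the composite $\pr_2\circ s^{-1}\circ(e,\id)$ that you write down is homotopic to $\id_M$, not to the inversion, since $(e,x)$ is fixed by $s$ and hence by $s^{-1}$. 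The correct candidate is $\inv:=\pr_2\circ s^{-1}\circ(\id_M,e\circ t)$, where $t\colon M\to\pt$ is the terminal map and $e\colon\pt\to M$ the unit, informally $x\mapsto\pr_2\bigl(s^{-1}(x,e)\bigr)$. With this fix the verification is cleaner than your closing paragraph suggests: $\pr_1\circ s\simeq\pr_1$ gives $\pr_1\circ s^{-1}\simeq\pr_1$, hence $s^{-1}\circ(\id_M,e\circ t)\simeq(\id_M,\inv)$, and therefore $\mu\circ(\id_M,\inv)\simeq\pr_2\circ s\circ s^{-1}\circ(\id_M,e\circ t)\simeq e\circ t$, which is precisely condition (1). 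Finally, your worry about upgrading this to a ``coherent inversion structure'' is moot for the statement as given: condition (1) only asks for a single map $\inv$ rendering one composite homotopic to the constant at the unit, a condition detected in $\ho\scrC$; this is exactly why being grouplike is a property of, rather than extra structure on, a commutative monoid object.
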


\begin{defi}
  Let $\scrC$ be an \cat with finite products; we say that a commutative
  monoid object $M \in \mon(\scrC)$ is a \emph{commutative group object} if
  it satisfies the equivalent conditions of Proposition \ref{comm-groups}. We
  write $\grp(\scrC)$ to denote the full subcategory of $\mon(\scrC)$
  consisting of commutative group objects.
\end{defi}

We refer the reader to \cite{ggn15} for other equivalent characterizations of
commutative group objects in an \cat, and for a detailed treatment of some of
its properties.

\begin{rem}\label{prototypical-ex}
  As we observed in the introduction, the
  \cat of spaces plays in the homotopy coherent world the same role that the
  ordinary category of sets plays in the ordinary case. Accordingly,
  we denote the \cats
  $\mon(\spaces)$ and $\grp(\spaces)$ just
  by $\mon$ and $\grp$, respectively. We sometimes refer to $\mon$ as the
  \cat of \emph{$\einf$-spaces} or as the \cat of
  \emph{$\einf$-monoids}, and we refer to $\grp$ as the \cat of
  \emph{$\einf$-groups}.

  Similarly, as the \cat of spectra is the homotopy coherent analogue of the
  category of Abelian groups, we denote the \cat $\calg(\Sp)$ just
  by $\calg$, and refer to it as the \cat of \emph{$\einf$-rings}.
\end{rem}

The difference between the \cat of $\einf$-groups and the \cat of spectra is
something that has no counterpart in the ordinary case.
As recalled below (see Remark \ref{infinite-loops}), the \cat
of $\einf$-groups is equivalent to the full subcategory $\Sp^{cn} \subset \Sp$
of connective spectra, which is not stable, and the smallest stable
presentable \cat containing it is precisely $\Sp$. In a sense, looking for an
homotopy coherent analogue of the Abelian category of Abelian groups,
one can start with
the more ``naive'' \cat of $\einf$-groups (the straightforward generalization
of the category of Abelian groups), and then the price to pay to have
stability (the generalization of the notion of being Abelian we want to
work with) is to add
nonconnective spectra to the picture, which (as far as the analogy with the
ordinary case goes) can be thought of as a sort of technical nuisance.

\begin{prop}\label{symm-mon-adj}
  Let $p\colon\scrC\operad \to \einf\operad$ and $q\colon\scrD\operad \to
  \einf\operad$
  be symmetric monoidal \cats, and let $F\colon\scrC\operad \to \scrD\operad$
  be a symmetric monoidal functor, such that the underlying functor
  $F_{\langle 1 \rangle}\colon\scrC\to\scrD$ admits a right adjoint. Then $F$
  admits a right adjoint $G\colon\scrD\operad \to \scrC\operad$, which is lax
  symmetric monoidal.
\end{prop}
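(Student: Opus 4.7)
The plan is to apply the relative adjoint functor theorem, e.g.\ \cite[7.3.2.6]{HA}, to produce $G\colon \scrD\operad \to \scrC\operad$ as a right adjoint of $F$ in the \cat of \cats over $\einf\operad$. Once $G$ is constructed relative to $\einf\operad$, it is by definition a functor over $\einf\operad$; the claim that $G$ is lax symmetric monoidal then reduces to checking that $G$ carries $q$-coCartesian lifts of inert morphisms in $\einf\operad$ to $p$-coCartesian morphisms, which will be a short verification using the Segal condition.

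The main input to the relative adjoint functor theorem is the existence of fiberwise right adjoints. For every object $\langle n \rangle$ of $\einf\operad$, the induced functor on fibers $F_{\langle n \rangle}\colon \scrC\operad_{\langle n \rangle} \to \scrD\operad_{\langle n \rangle}$ is, via the Segal condition, equivalent to the $n$-fold product $F_{\langle 1 \rangle}^{\times n}\colon \scrC^n \to \scrD^n$. Hence the hypothesis that $F_{\langle 1 \rangle}$ admits a right adjoint $G_{\langle 1 \rangle}$ immediately yields a fiberwise right adjoint $G_{\langle 1 \rangle}^{\times n}$ on each fiber. The additional Beck--Chevalley condition asserts that these fiberwise right adjoints are compatible with pushforward along morphisms of $\einf\operad$. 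Since $F$ preserves all coCartesian lifts (being symmetric monoidal, not merely lax), the analogous square involving the left adjoints commutes; the Beck--Chevalley condition for the right adjoints is its mate, and hence automatic.

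Finally, I would verify that the resulting $G$ sends $q$-coCartesian lifts of inert morphisms to $p$-coCartesian morphisms. By the Segal condition, for an inert morphism $\alpha\colon \langle n \rangle \to \langle m \rangle$ the pushforward $\alpha_!^{\scrD}$ is equivalent (up to relabeling of factors) to a projection of products, and the fiberwise right adjoint $G_{\langle 1 \rangle}^{\times \bullet}$ manifestly commutes with such projections. The main obstacle will be the careful bookkeeping around the hypotheses of the relative adjoint functor theorem together with the Segal identifications; once those are in place, the remaining verifications are essentially formal consequences of general facts about adjunctions and products.
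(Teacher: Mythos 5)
Your proof is correct and takes essentially the same route as the paper, whose entire proof is the citation ``This follows immediately from \cite[7.3.2.1]{HA}'' --- that result is verbatim this proposition (for general $\calO$-monoidal functors), and its proof in \cite{HA} is precisely the argument you reconstruct: the relative adjoint functor theorem applied over $\einf\operad$, fiberwise right adjoints supplied by the Segal condition, and the check that the resulting $G$ preserves coCartesian lifts of inert morphisms. One small imprecision worth noting: no Beck--Chevalley condition is an input to the relative adjoint functor theorem (\cite[7.3.2.6]{HA} requires only that $F$ carry locally coCartesian edges to locally coCartesian edges and admit fiberwise right adjoints), and the mate of a commuting square is \emph{not} automatically invertible --- the only place where such invertibility is actually needed is for the inert pushforwards, which you correctly verify at the end via the Segal identification with product projections.
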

\begin{proof}
  This follows immediately from \cite[7.3.2.1]{HA}.
\end{proof}

Adjunctions of the kind of Proposition \ref{symm-mon-adj} constitute a
homotopy coherent analogue of ordinary symmetric monoidal adjunctions, hence
we will adopt the same terminology in this context.

\begin{defi}
  Let $p\colon\scrC\operad \to \einf\operad$ and $q\colon\scrD\operad \to \einf\operad$
  be symmetric monoidal \cats. We say that an adjunction
  $$F : \scrC\operad \rightleftarrows \scrD\operad : G$$
  is a \emph{symmetric monoidal adjunction} if the left adjoint is symmetric
  monoidal and the right adjoint is lax symmetric monoidal.
\end{defi}

We now pose our attention to the case of presentable \cats.

\begin{prop}\cite[4.1, 4.4]{ggn15}
  Given a presentable \cat $\scrC$, the \cats $\mon(\scrC)$ and
  $\grp(\scrC)$ are presentable; moreover, there are functors
  $$\scrC \to \mon(\scrC) \to \grp(\scrC)$$
  which are left adjoint to the respective forgetful functors.
\end{prop}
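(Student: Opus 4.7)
The plan is to establish presentability in two steps and then invoke the Adjoint Functor Theorem for the existence of the left adjoints. First I would identify $\mon(\scrC)$ as a full subcategory of the functor \cat $\Fun(\einf\operad,\scrC)$. Since $\scrC$ is presentable and $\einf\operad = \nerve(\mathrm{Fin}_*)$ is small, $\Fun(\einf\operad,\scrC)$ is presentable by \cite[5.5.3.6]{HTT}. The Segal condition singles out those $M\colon\einf\operad\to\scrC$ for which the canonical maps $M(\langle n\rangle)\to\prod_{i=1}^n M(\langle 1\rangle)$ induced by the inert morphisms $\rho^i$ are equivalences; this is a small family of equivalence-conditions and therefore exhibits $\mon(\scrC)$ as an accessible localization of $\Fun(\einf\operad,\scrC)$. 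Accessible localizations of presentable \cats are presentable, so $\mon(\scrC)$ is presentable.

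Next I would treat $\grp(\scrC)\subset\mon(\scrC)$. By Proposition \ref{comm-groups}, being grouplike is equivalent to requiring the shear map $M\times M\to M\times M$, $(x,y)\mapsto(x,x\otimes y)$, to be an equivalence in $\scrC$. This is again a single equivalence-condition (functorial in $M$), so $\grp(\scrC)$ is the full subcategory of $\mon(\scrC)$ spanned by the local objects for an accessible family of morphisms. Hence $\grp(\scrC)$ is an accessible localization of the presentable \cat $\mon(\scrC)$, and is itself presentable.

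For the left adjoints, I would apply the Adjoint Functor Theorem \cite[5.5.2.9]{HTT}. The forgetful functor $\mon(\scrC)\to\scrC$ factors as the inclusion $\mon(\scrC)\hookrightarrow\Fun(\einf\operad,\scrC)$ followed by evaluation at $\langle 1\rangle$; the inclusion preserves limits (the Segal condition is a limit condition and is preserved by pointwise limits) and filtered colimits (the Segal maps involve only finite products in $\scrC$, and filtered colimits in a presentable \cat commute with finite products), while evaluation at $\langle 1\rangle$ preserves all limits and colimits. Hence the composite preserves limits and is accessible, so it admits a left adjoint. The same argument applies to $\grp(\scrC)\hookrightarrow\mon(\scrC)$: the grouplike subcategory is closed under limits, and accessibility again follows from the fact that the shear equivalence condition is expressible in finitary terms.

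The main point to be careful about is verifying that the Segal condition and the grouplike condition are genuinely \emph{accessible}, i.e.\ preserved under sufficiently filtered colimits in $\scrC$; this comes down to the fact that finite products commute with filtered colimits in any presentable \cat, so the relevant Segal and shear maps are filtered-colimit-stable. Once this is in place, both presentability and the existence of the two left adjoints follow formally.
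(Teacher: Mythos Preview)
The paper does not supply its own proof of this proposition; it merely records the statement together with a citation to \cite[4.1, 4.4]{ggn15}. Your argument is essentially the standard one (and close in spirit to what appears in the cited reference): realize $\mon(\scrC)$ as the full subcategory of $\Fun(\einf\operad,\scrC)$ cut out by the Segal condition, observe that this subcategory is closed under limits and filtered colimits because finite products in a presentable \cat commute with filtered colimits, and then invoke the Adjoint Functor Theorem; likewise for $\grp(\scrC)\subset\mon(\scrC)$ via the shear-map condition.

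One point worth tightening: the phrase ``accessible localization'' asks for slightly more than you have written---either an explicit small set of morphisms whose local objects are the Segal (resp.\ grouplike) functors, or a direct appeal to a result guaranteeing that a full subcategory of a presentable \cat closed under limits and $\kappa$-filtered colimits is itself presentable and reflective. You have correctly identified the relevant closure properties, so this is a routine citation to fill in rather than a gap in the argument.
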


Recall that, given a presentable \cat $\scrC$, we can define its
stabilization $\mathrm{Exc}_*(\spaces_*^{\mathrm{fin}}, \scrC)$ (which is again
presentable), denoted $\Sp(\scrC)$, related to $\scrC$ by an adjunction
$$\susinftyp : \scrC \rightleftarrows \Sp(\scrC): \ominfty$$
where the left adjoint is called the \emph{suspension spectrum} functor (see
\cite[Section 1.4.2]{HA} for details).

\begin{prop}\cite[4.10]{ggn15}\label{ladjs-factorization}
  Let $\scrC$ be a presentable \cat. The suspension spectrum functor
  $\susinftyp\colon\scrC \to \Sp(\scrC)$ factors as a composition of left
  adjoints
  $$\scrC \xto{(-)_+} \scrC _* \xto{\free} \mon(\scrC) \xto{(-)\gp} \grp(\scrC)
  \xto{\Binf} \Sp(\scrC)$$
  each of which is uniquely determined by the fact that it commutes with the
  corresponding free functor from $\scrC$.
\end{prop}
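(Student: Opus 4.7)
The plan is to recognize each of the four arrows as a left adjoint to a natural forgetful functor, identify the composition of right adjoints with $\ominfty$, and conclude by uniqueness of adjoints up to canonical equivalence. Each factor can then be pinned down individually by its compatibility with the free functor from $\scrC$.

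First, I would establish the four constituent adjunctions. The adjunction $(-)_+ \dashv U$ between $\scrC$ and $\scrC_*$ is the standard free-basepoint adjunction, with $U$ forgetting the basepoint. The free commutative monoid functor $\free \colon \scrC_* \to \mon(\scrC)$ is left adjoint to the forgetful functor that sends a commutative monoid $M$ to its underlying object pointed at the unit map $* \to M$; existence follows from the free algebra construction of \cite[3.1.3.5]{HA}. Group completion $(-)\gp \colon \mon(\scrC) \to \grp(\scrC)$ is left adjoint to the fully faithful inclusion, as recorded in \cite{ggn15}. Finally, $\Binf \dashv \ominfty$ arises from the fact that $\ominfty \colon \Sp(\scrC) \to \scrC$ canonically factors through commutative group objects: any infinite loop object acquires a natural $\einf$-group structure, and under the equivalence $\grp(\scrC) \simeq \Sp^{\mathrm{cn}}(\scrC)$ of \cite{ggn15}, $\Binf$ becomes the inclusion of connective objects.

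Second, I would assemble the composition of right adjoints
\begin{displaymath}
  \Sp(\scrC) \xto{\ominfty} \grp(\scrC) \hookrightarrow \mon(\scrC) \xto{U} \scrC_* \xto{U} \scrC
\end{displaymath}
and verify that it coincides with the usual $\ominfty \colon \Sp(\scrC) \to \scrC$; at each step one discards algebraic or pointed structure while keeping the underlying object of $\scrC$, so no information is lost. By uniqueness of left adjoints up to canonical equivalence, the composition of the indicated left adjoints in the reverse direction is therefore naturally equivalent to $\susinftyp$.

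For the uniqueness clause, note that each intermediate category $\scrC_*$, $\mon(\scrC)$, $\grp(\scrC)$, $\Sp(\scrC)$ carries a canonical \emph{free functor from $\scrC$} obtained as the left adjoint to the total forgetful functor down to $\scrC$. Commutativity of any candidate factor with these free functors pins its value down on free objects; since every factor in question is a left adjoint between presentable \cats and therefore colimit-preserving, and since free objects generate the target under colimits, this determines each factor up to canonical equivalence. The main obstacle is the identification of the right-adjoint composition with the standard $\ominfty$, which amounts to checking that $\ominfty$ naturally lifts through $\grp(\scrC)$ rather than merely through $\mon(\scrC)$; this is where the structural results of \cite{ggn15} do the work, and the remainder is bookkeeping with adjoint triangles.
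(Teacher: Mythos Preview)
The paper does not supply its own proof of this proposition: it is stated with a citation to \cite[4.10]{ggn15} and no proof environment follows. So there is nothing in the paper to compare your argument against directly.

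That said, your sketch is sound and is essentially the natural strategy. A couple of remarks. First, the actual argument in \cite{ggn15} is organized around universal properties: each of $\scrC_*$, $\mon(\scrC)$, $\grp(\scrC)$, $\Sp(\scrC)$ is characterized as the free pointed (resp.\ preadditive, additive, stable) presentable \cat on $\scrC$, and the factorization together with the uniqueness clause drops out of those universal properties. Your approach---identify the four right adjoints, check their composite is $\ominfty$, then invoke uniqueness of left adjoints---reaches the same conclusion by a slightly more hands-on route. Second, for the uniqueness clause your claim that ``free objects generate the target under colimits'' deserves a word of justification in each case (e.g.\ every pointed object is a pushout of free-basepoint objects; the monadic forgetful functors are conservative and reflect sifted colimits, so free algebras generate). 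None of this is difficult, but it is the one place where a reader might ask you to say more.
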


When $\scrC$ is a presentable symmetric monoidal \cat, the above chain of
adjunctions can be enhanced to a chain of symmetric monoidal adjunctions.

\begin{prop}\label{smash-product}\cite[5.1]{ggn15}
  Let $\scrC$ be a presentable closed symmetric monoidal \cat. The \cats
  $\scrC _*$,
  $\mon(\scrC)$, $\grp(\scrC)$ and $\Sp(\scrC)$ all admit closed symmetric
  monoidal structures, which are uniquely determined by the requirement that
  the respective free functors from $\scrC$ are symmetric monoidal. Moreover,
  each of the functors
  $$\scrC _* \to \mon(\scrC) \to \grp(\scrC) \to \Sp(\scrC)$$
  uniquely extends to a symmetric monoidal left adjoint.
\end{prop}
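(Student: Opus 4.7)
The plan is to identify each of the four constructions $(-)_*$, $\mon(-)$, $\grp(-)$ and $\Sp(-)$ as a symmetric monoidal localization of the $\infty$-category $\mathrm{Pr}^L$ of presentable \cats and colimit preserving functors, and then appeal to the fact that an $\einf$-algebra structure on $\scrC$ (which, by \cite[4.8.1.15]{HA}, is the same thing as a presentable closed symmetric monoidal structure) descends uniquely along any such localization. First I would recall that $\mathrm{Pr}^L$ carries a closed symmetric monoidal structure \cite[4.8.1.14]{HA}, and that $\calg(\mathrm{Pr}^L)$ is precisely the \cat of presentable closed symmetric monoidal \cats with symmetric monoidal left adjoints between them.

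Next I would show that the full subcategories of $\mathrm{Pr}^L$ spanned, respectively, by pointed, preadditive (commutative monoid complete), semiadditive (grouplike), and stable presentable \cats are each reflective, with reflectors given on objects by $\scrC\mapsto\scrC_*$, $\scrC\mapsto\mon(\scrC)$, $\scrC\mapsto\grp(\scrC)$, and $\scrC\mapsto\Sp(\scrC)$. The pointed and stable cases are \cite[4.8.2.11, 4.8.2.18]{HA}; for the $\mon$ and $\grp$ cases one uses the characterization given by \cite[2.8, 2.10]{ggn15} of $\mon(\scrC)$ and $\grp(\scrC)$ as the universal presentable \cats under $\scrC$ in which finite coproducts coincide with finite products (with the additional grouplike condition in the second case).

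The key step is then to verify that each of these reflective inclusions is a symmetric monoidal localization in the sense of \cite[2.2.1.9]{HA}: one must check that the local equivalences are stable under tensoring with arbitrary objects of $\mathrm{Pr}^L$. This is exactly the content one needs in order to lift the reflection to the level of $\einf$-algebras, producing a left adjoint
\begin{displaymath}
  \calg(\mathrm{Pr}^L) \longrightarrow \calg(\mathrm{Pr}^L_{\mathrm{loc}})
\end{displaymath}
in each of the four cases. Applying this to $\scrC \in \calg(\mathrm{Pr}^L)$ endows $\scrC_*$, $\mon(\scrC)$, $\grp(\scrC)$ and $\Sp(\scrC)$ with canonical presentable closed symmetric monoidal structures, uniquely characterized by the fact that the unit map $\scrC \to L(\scrC)$ lifts to a symmetric monoidal left adjoint.

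To deduce the second half of the statement, I would compose these localizations: the chain $\scrC \to \scrC_* \to \mon(\scrC) \to \grp(\scrC) \to \Sp(\scrC)$ is nothing but the sequence of successive reflections, which by the naturality of the construction above are morphisms in $\calg(\mathrm{Pr}^L)$ and thus symmetric monoidal left adjoints. The uniqueness is automatic from the universal property of each reflection. I expect the main obstacle to be the verification that the local equivalences for $\mon$ and $\grp$ are stable under tensoring, since the analogous fact for pointed and stable \cats is already established in \cite{HA}; this is where the explicit description of the reflectors $(-)_+$, $\free$, and $(-)\gp$ given in Proposition \ref{ladjs-factorization} as free functors, combined with the fact that the tensor product of presentable \cats commutes with such free constructions, does the work.
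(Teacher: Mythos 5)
This proposition is imported verbatim from \cite[5.1]{ggn15}; the paper offers no proof of it, so there is nothing internal to compare your argument against. Measured against the cited source, your reconstruction is essentially the right one: GGN do prove the result by exhibiting $(-)_*$, $\mon(-)$, $\grp(-)$ and $\Sp(-)$ as symmetric monoidal (indeed smashing) localizations of $\mathrm{Pr}^{\mathrm{L}}$ onto the pointed, preadditive, additive and stable presentable \cats respectively, and then transporting $\einf$-algebra structures along the induced functors on $\calg(\mathrm{Pr}^{\mathrm{L}})$; your identification of $\calg(\mathrm{Pr}^{\mathrm{L}})$ with presentable closed symmetric monoidal \cats and your appeal to the compatibility criterion of \cite[2.2.1.9]{HA} are both correct, as is the deduction of the second half of the statement from the universal properties of the successive reflections.

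Two points deserve sharpening. First, a terminological slip: the subcategory reflected onto by $\grp(-)$ is that of \emph{additive} presentable \cats (preadditive plus grouplike), not ``semiadditive''; semiadditivity is the $\mon$-level condition. Second, and more substantively, the step you flag as the main obstacle --- stability of the local equivalences under tensoring for the $\mon$ and $\grp$ cases --- is not really discharged by the remark that $\otimes$ commutes with free constructions. The way GGN handle it is to first prove the base-change equivalences $\scrC_*\simeq\spaces_*\otimes\scrC$, $\mon(\scrC)\simeq\mon\otimes\scrC$, $\grp(\scrC)\simeq\grp\otimes\scrC$, $\Sp(\scrC)\simeq\Sp\otimes\scrC$, and then to show that $\spaces_*$, $\mon$, $\grp$ and $\Sp$ are \emph{idempotent} objects of $\calg(\mathrm{Pr}^{\mathrm{L}})$ (for $\mon$ this is an Eckmann--Hilton argument identifying $\mon(\mon)$ with $\mon$). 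Idempotency is what makes the localization smashing, hence automatically compatible with the symmetric monoidal structure, and it is also what makes the closed symmetric monoidal structure on the localization a property rather than extra data, giving the uniqueness assertion. If you supply that input, your outline closes up into the proof of \cite[5.1]{ggn15}.
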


\begin{defi}\label{spectra}
  The stabilization
  $\Sp(\spaces)$ of the \cat of spaces is
  the \emph{\cat of spectra} $\Sp$. The product
  encoded by the symmetric monoidal structure $$\Sp\operad \to \einf\operad$$
  induced on $\Sp$ by Proposition \ref{smash-product} is commonly referred
  to as the \emph{smash product of spectra}.
\end{defi}

\begin{notat}\label{notat-adj-appendix}
  In most cases, we omit notations for the forgetful functors. In the
  special case $\scrC = \spaces$, we adopt the following notations
  and terminology:
\begin{enumerate}
  \item We denote the right adjoint to $\Binf$ as $\ominfty_a$ and refer
  to it as the \emph{underlying additive $\einf$-space} functor.
  \item We denote the composite left adjoint $\Binf \circ (-)\gp \circ
    \free$
  by $$\susinfty\colon\spaces_* \to \Sp$$
  and denote its right adjoint, with the usual abuse of notation, by
  $\ominfty$.
  \item We denote the composite left adjoint $\free \circ (-)_+$ by
  $$\free_+ \colon \spaces \to \mon.$$
\end{enumerate}
\end{notat}

\begin{rem}\label{infinite-loops}\cite[5.2.6.26]{HA}
  The \cat $\grp$ of $\einf$-groups and the \cat $\Sp^{cn}$ of connective
  spectra are equivalent. The equivalence is given by the functor $\Binf$
  defined above, whose essential
  image is exactly $\Sp^{cn}$. In particular,
  the composite functor $$\Sp \xto{\ominfty_a} \grp \xto{\sim} \Sp^{cn}$$
  is equivalent to the truncation functor $\tau_{\geq 0}$, right adjoint
  to the inclusion $\Sp^{cn} \to \Sp$.
\end{rem}

\begin{rem}\label{additive-spectra}
  As $\Sp$ is a stable \cat, \cite[1.1.3.5]{HA} implies that it is also an
  additive \cat (see \cite[Section 2]{ggn15} for the definition and a detailed
  discussion of this property). Therefore, by \cite[2.8]{ggn15}, every
  spectrum admits a commutative group structure, and all maps between spectra
  respect this structure; to be more precise, the forgetful map
  $\grp(\Sp) \to \Sp$ is an equivalence.
  In particular, given any spectrum $X$, there exists a map
  $\inv : X \to X$ such that the composition
  $$X \xto{\Delta} X \oplus X \xto{\id \oplus \inv} X \oplus X\xto{\mu} X$$
  (where $\Delta$ is the diagonal map, and $\mu$ is the
  multiplication induced by the commutative algebra structure on $X$) is
  nullhomotopic.
\end{rem}

The above remark is consistent with the fact that we think of $\Sp$ as the
homotopy coherent analogue of the Abelian category of Abelian groups, and
can be interpreted as saying that all spectra admit a ``homotopy
coherent commutative group structure''.

\subsection{Modules over commutative algebra objects}
Let $\scrC$ be a symmetric monoidal \cat, and let $R \in \calg(\scrC)$ be a
commutative algebra object in $\scrC$. Analogously to what happens in the
ordinary case, it is possible to define an \cat $\Mod_R(\scrC)$ of $R$-modules
which, under mild assumptions, is again symmetric monoidal. In
\cite[3.3, 4.5]{HA} a few equivalent models for this
\cat are constructed, together with a detailed discussion of their
equivalence. For future reference, we recollect in this section some results
about these \cats, whose formulation (and validity) are independent of the
specific model, referring the reader to \cite{HA} for details and proofs.

\begin{notat}
  Let $\scrC$ be a symmetric monoidal \cat, and let $R \in \calg(\scrC)$ be a
  commutative algebra object in $\scrC$. We denote by $\Mod_R(\scrC)$
  the \emph{\cat of $R$-modules}. If $\scrC = \Sp$,
  we denote $\Mod_R(\scrC)$ just by $\Mod_R$.
\end{notat}

\begin{prop}\label{prop-symm-mon-modules}\cite[4.5.2.1]{HA}
  Let $\scrC$ be a symmetric monoidal \cat. Assume that $\scrC$ admits
  geometric realizations of simplicial objects, and that the tensor product
  $\tensor\colon \scrC \times \scrC \to \scrC$ preserves geometric realizations
  of simplicial objects separately in each variable.
  Let $R \in \calg(\scrC)$ be a
  commutative algebra object in $\scrC$. The \cat $\Mod_R(\scrC)$ admits a
  symmetric monoidal structure
  $$\Mod_R(\scrC)^{\tensor_R} \to \einf\operad$$
  whose symmetric monoidal product is also called the \emph{relative tensor
  product}.
\end{prop}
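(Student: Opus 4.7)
The plan is to construct the desired symmetric monoidal structure directly within the $\infty$-operadic framework of \cite[Ch.~3]{HA}, by exhibiting a coCartesian fibration $\Mod_R(\scrC)\operad_{\tensor_R} \to \einf\operad$ whose fiber over $\langle 1 \rangle$ is $\Mod_R(\scrC)$ and whose fiber over $\langle n \rangle$ is $\Mod_R(\scrC)^n$ (so the Segal condition holds essentially by definition). The substance lies in specifying the active morphisms, i.e.\ the $n$-ary tensor product functors, and then verifying that they assemble into a coherent coCartesian fibration.

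First I would define the binary relative tensor product. For $M, N \in \Mod_R(\scrC)$, I would set $M \tensor_R N$ to be the geometric realization of the two-sided bar construction
$$\mathrm{Bar}_\bullet(M,R,N) \colon [n] \mapsto M \tensor R^{\tensor n} \tensor N,$$
with face maps built from the $R$-actions on $M$ and $N$ and the multiplication on $R$, and degeneracies inserting the unit. The hypothesis that $\scrC$ admits geometric realizations of simplicial objects guarantees the colimit exists; the hypothesis that $\tensor$ preserves such realizations separately in each variable ensures both functoriality in $M$ and $N$ and, more importantly, the compatibility needed for the resulting binary operation to be associative and unital up to coherent homotopy (with $R$ itself being the unit).

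The next step is to promote this binary operation to an $n$-ary one and to organize the whole package into a coCartesian fibration over $\einf\operad$. The cleanest route is to construct $\Mod(\scrC)\operad \to \calg(\scrC)\operad \times_{\einf\operad} \einf\operad$ as a global family of module categories and then pull back along the commutative algebra $R$; equivalently, one invokes operadic left Kan extension along the map of operads encoding ``a module is an algebra over the colored operad with colors $\{R, M\}$''. In either presentation, the bar construction above is what computes the operadic Kan extension, and the hypotheses on geometric realizations are exactly the ``sifted-colimit compatibility'' assumptions required by \cite[3.1.1.20, 3.4.4]{HA} for such an operadic left Kan extension to exist.

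The main obstacle is the last point: verifying that the pointwise formula given by the bar construction actually glues into a coCartesian fibration satisfying the Segal condition, i.e.\ that the higher coherences of $\tensor_R$ (associativity, commutativity, and all their compatibilities) follow from those of $\tensor$ in $\scrC$ together with the coherent $R$-action. This is not a routine calculation but a genuine theorem about operadic Kan extensions under sifted-colimit preservation; once granted, the symmetry of $\tensor$ in $\scrC$ transfers to $\tensor_R$ because the bar construction is manifestly symmetric in $M$ and $N$, yielding the desired symmetric monoidal structure on $\Mod_R(\scrC)$.
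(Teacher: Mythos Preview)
The paper does not give a proof of this proposition: it is stated with a citation to \cite[4.5.2.1]{HA} and no argument is supplied. Your proposal is a reasonable high-level outline of the construction Lurie carries out in the cited reference (bar construction for the relative tensor product, assembly into a coCartesian fibration via the global module $\infty$-operad, with the geometric-realization hypotheses entering as the compatibility needed for operadic Kan extension), so in that sense you are in agreement with what the paper defers to. As you yourself note, the genuine content is the coherence verification, which you do not carry out; your sketch is therefore an outline rather than a proof, but since the paper itself treats this as a black-box citation, that is entirely appropriate here.
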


\begin{prop}\label{extension-restriction-scalars}
  Let $\scrC$ be a symmetric monoidal \cat as in Proposition
  \ref{prop-symm-mon-modules}.
  Let $A$ and $B$ be commutative algebra objects in $\scrC$, and let
  $f \colon A \to B$ be a morphism in $\calg(\scrC)$. Then,
  there exists a symmetric monoidal adjunction
  $$- \tensor_A B : \Mod_A(\scrC) \rightleftarrows \Mod_B(\scrC) : f^!$$
  with the left adjoint given by the relative tensor product $M \mapsto M
  \tensor_A B$. We refer to such left adjoint as the \emph{extension
  of scalars} functor, and to the right adjoint as the
  \emph{restriction of scalars} functor.
\end{prop}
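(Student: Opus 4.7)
The plan is to construct the adjunction in two stages, by reducing the statement to the free/forgetful adjunction over a fixed base, and then promoting it to a symmetric monoidal adjunction via Proposition \ref{symm-mon-adj}.

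First, I would observe that the morphism $f\colon A \to B$ in $\calg(\scrC)$ exhibits $B$ as a commutative algebra object in the symmetric monoidal \cat $\Mod_A(\scrC)$ obtained from Proposition \ref{prop-symm-mon-modules}. Indeed, this is essentially the content of \cite[3.4.1.7]{HA} (already invoked in the proof of Proposition \ref{stabilization-of-algebras} in the form $\calg^{A/} \simeq \calg(\Mod_A)$), which produces a natural equivalence $\calg(\scrC)_{A/} \simeq \calg(\Mod_A(\scrC))$. The key identification I need next is that there is a canonical equivalence
\begin{equation*}
  \Mod_B(\scrC) \simeq \Mod_B(\Mod_A(\scrC))
\end{equation*}
compatible with the respective symmetric monoidal structures given by relative tensor products; this is the content of \cite[Section 4.5]{HA} about iterated module categories, and it is what lets us reinterpret $B$-modules as $B$-modules in the ``$A$-linear'' world.

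Once this is in place, I would apply the free/forgetful adjunction $\scrD \rightleftarrows \Mod_R(\scrD)$ (for $\scrD$ a symmetric monoidal \cat satisfying the hypotheses of Proposition \ref{prop-symm-mon-modules} and $R$ an algebra therein) in the case $\scrD = \Mod_A(\scrC)$ and $R = B$: this yields an adjunction whose left adjoint is the free $B$-module functor $M \mapsto M \tensor_A B$, and whose right adjoint is the forgetful functor, which under the identification above becomes precisely restriction of scalars $f^!\colon \Mod_B(\scrC) \to \Mod_A(\scrC)$. I would double-check this identification by comparing how restriction of scalars is described intrinsically in \cite[4.5.3]{HA}.

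Finally, to promote the adjunction to a symmetric monoidal one, I would verify that the free functor $- \tensor_A B \colon \Mod_A(\scrC) \to \Mod_B(\scrC)$ is symmetric monoidal with respect to the relative tensor products over $A$ and $B$ respectively: this follows since the free functor is computed by tensoring with $B$ and since the relative tensor products are defined via bar constructions which are preserved by this operation. By Proposition \ref{symm-mon-adj}, the right adjoint $f^!$ then inherits a canonical lax symmetric monoidal structure. The main obstacle I expect is not constructing any of these individual pieces but rather verifying coherently that the composite identifications match up, so that the forgetful functor from $\Mod_B(\Mod_A(\scrC))$ really does agree with restriction of scalars as intrinsically defined in $\Mod_B(\scrC)$; however, this is spelled out in \cite[4.5.3.1, 4.5.3.2]{HA}, so in the end the proof reduces to assembling these citations.
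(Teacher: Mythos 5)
Your argument is correct, but it takes a different route from the paper. The paper's proof is a direct three-line citation: the adjunction itself, with left adjoint the relative tensor product, is \cite[4.6.2.17]{HA}; the left adjoint is symmetric monoidal by \cite[4.5.3.2]{HA}; and the right adjoint is lax symmetric monoidal by \cite[7.3.2.7]{HA}. You instead \emph{construct} the adjunction by viewing $B$ as a commutative algebra in $\Mod_A(\scrC)$, invoking the iterated-modules equivalence $\Mod_B(\Mod_A(\scrC)) \simeq \Mod_B(\scrC)$, and recognizing the adjunction as the free/forgetful adjunction over the base $\Mod_A(\scrC)$; the final promotion to a symmetric monoidal adjunction via Proposition \ref{symm-mon-adj} coincides with the paper's last step, since \cite[7.3.2.7]{HA} is just the specialization of \cite[7.3.2.1]{HA} that Proposition \ref{symm-mon-adj} records. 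What your approach buys is an explanation of \emph{why} the left adjoint is $M \mapsto M \tensor_A B$ rather than a bare citation of that fact; what it costs is an extra verification you only gesture at, namely that $\Mod_A(\scrC)$ again satisfies the hypotheses of Proposition \ref{prop-symm-mon-modules} (it admits geometric realizations, created by the forgetful functor to $\scrC$, and $\tensor_A$ preserves them in each variable because it is computed by a bar construction), so that $\Mod_B(\Mod_A(\scrC))$ and its relative tensor product are actually defined. That point, together with the compatibility of the iterated-modules equivalence with the symmetric monoidal structures, is indeed covered by the parts of \cite[Section 4.5]{HA} you cite, so the proof assembles correctly.
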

\begin{proof}
  The existence of the adjunction on the underlying \cats follows from
  \cite[4.6.2.17]{HA}, which also implies that the left adjoint is given
  by the relative tensor product. By \cite[4.5.3.2]{HA}, the left adjoint is
  symmetric monoidal. Finally, it follows from \cite[7.3.2.7]{HA} that the
  right adjoint is lax monoidal.
\end{proof}

We sometimes omit notation for the restriction of scalars functor.

\begin{rem}
  It follows from \cite[4.8.2.19]{HA} that the smash product
  $$\tensor\colon\Sp\times\Sp\to\Sp$$ preserves small colimits separately in each
  variable; in particular, $\Sp$ satisfies the hypotheses of Proposition
  \ref{prop-symm-mon-modules}.
\end{rem}

In ordinary commutative algebra, the category of modules over a ring is
Abelian. Analogously, the \cat of modules over an $\einf$-ring is stable.

\begin{prop}\label{stability-of-modules}\cite[7.1.1.5]{HA}
  Let $R$ be an $\einf$-ring. Then, the \cat $\Mod_R$ of $R$-modules is
  stable.
\end{prop}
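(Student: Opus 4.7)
The plan is to apply Lurie's characterization of stable \cats from HA 1.1.3.4: a pointed \cat admitting finite limits and colimits is stable if and only if a commutative square is a pushout precisely when it is a pullback. I would deduce this criterion for $\Mod_R$ from the analogous property of $\Sp$ by studying the forgetful functor $U\colon \Mod_R \to \Sp$.

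First I would observe that $\Mod_R$ is pointed: the zero spectrum admits an essentially unique $R$-module structure (it is both initial and terminal in $\Mod_R$, since $\Map_{\Mod_R}(M, 0)$ and $\Map_{\Mod_R}(0, M)$ are contractible for any $R$-module $M$). Second, I would show that $U$ creates finite limits and finite colimits. For limits, $U$ admits the free module functor $X \mapsto R \tensor X$ as a left adjoint (by Proposition \ref{extension-restriction-scalars} applied to the unit map $\bbS \to R$, identifying $\Sp \simeq \Mod_\bbS$), so $U$ preserves limits, and by standard arguments in $\calg(\Sp)$-module theory it is also conservative, hence it creates limits. For colimits, one invokes the monadic description of $\Mod_R$ over $\Sp$: the adjunction $R \tensor - \dashv U$ is monadic (Barr--Beck--Lurie, HA 4.7.3.5), and the monad $R \tensor -$ preserves sifted colimits because the smash product preserves colimits separately in each variable. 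By HA 4.2.3.5, $U$ then preserves and reflects all small colimits, and together with preservation of finite limits this yields creation of finite colimits.

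With these ingredients in place, the verification of the biCartesian criterion is formal. A commutative square $\sigma$ in $\Mod_R$ is a pushout if and only if $U(\sigma)$ is a pushout in $\Sp$ (since $U$ creates finite colimits), which by stability of $\Sp$ (Remark \ref{additive-spectra}) is equivalent to $U(\sigma)$ being a pullback in $\Sp$, which in turn is equivalent to $\sigma$ being a pullback in $\Mod_R$ (since $U$ creates finite limits). Invoking HA 1.1.3.4 concludes that $\Mod_R$ is stable.

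The main technical obstacle is the creation of finite colimits by $U$: while creation of limits is immediate from the existence of the free module left adjoint and conservativity, creation of colimits genuinely requires the monadic machinery and the compatibility of $\tensor$ with colimits in $\Sp$. Everything else is a purely formal consequence of the stability of spectra and the biCartesian criterion, and in particular the proof does not require any reasoning with point-set models of modules.
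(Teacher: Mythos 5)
Your argument is correct. The paper offers no proof of its own here---it simply cites \cite[7.1.1.5]{HA}---and your reconstruction (deducing stability of $\Mod_R$ from stability of $\Sp$ via the conservative, limit- and colimit-preserving forgetful functor, combined with the pushout-equals-pullback characterization of stable \cats) is essentially the argument Lurie gives for that result.
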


\nocite{francisPHD}

\bibliography{standard}
\bibliographystyle{amsalpha}

\end{document}